\documentclass[11pt]{amsart}
\usepackage{amsmath, amsfonts, amssymb,amsthm}
\usepackage{euscript}
\usepackage[T1]{fontenc}
\usepackage{graphicx}
\usepackage{color}

\usepackage{hyperref}

\usepackage{tikz-cd}

\setlength{\hoffset}{-18pt}        
\setlength{\oddsidemargin}{0pt} 
\setlength{\evensidemargin}{9pt} 
\setlength{\marginparwidth}{54pt} 
\setlength{\textwidth}{481pt} 
\setlength{\voffset}{-18pt} 
\setlength{\marginparsep}{7pt} 
\setlength{\topmargin}{0pt} 
\setlength{\headheight}{13pt} 
\setlength{\headsep}{10pt} 
\setlength{\footskip}{27pt} 
\setlength{\textheight}{650pt} 

\newtheorem{thm}{Theorem}[section]
\newtheorem{lem}[thm]{Lemma}

\newtheorem{prop}[thm]{Proposition}
\newtheorem{cor}[thm]{Corollary}

\theoremstyle{definition}
\newtheorem{defn}[thm]{Definition}

\newtheorem{ex}[thm]{Example}
\newtheorem{rem}[thm]{Remark}

\DeclareMathOperator{\R}{\mathbb R}
\DeclareMathOperator{\C}{\mathbb C}
\DeclareMathOperator{\N}{\mathbb N}

\DeclareMathOperator{\Z}{\mathcal Z}
\DeclareMathOperator{\I}{\mathcal I}

\DeclareMathOperator{\SR}{\mathcal K^0}

\DeclareMathOperator{\SRR}{\mathcal R^0}

\DeclareMathOperator{\Co}{\mathcal C}
\DeclareMathOperator{\SO}{\mathcal O}
\DeclareMathOperator{\Pol}{\mathcal P}
\DeclareMathOperator{\K}{\mathcal K}
\DeclareMathOperator{\QQ}{\mathbb Q}

\DeclareMathOperator{\Cent}{Cent}

\DeclareMathOperator{\RSp}{R-Spec}

\DeclareMathOperator{\Norm}{Norm}

\DeclareMathOperator{\Sp}{Spec}
\DeclareMathOperator{\ReSp}{R-Spec}
\DeclareMathOperator{\MSp}{\rm Max}

\DeclareMathOperator{\p}{\mathfrak{p}}
\DeclareMathOperator{\q}{{\mathfrak q}}
\DeclareMathOperator{\ir}{{\mathfrak r}}

\DeclareMathOperator{\Max}{\rm Max}
\DeclareMathOperator{\ReMax}{\rm R-Max}

\DeclareMathOperator{\m}{\mathfrak m}

\def \S {{\mathcal S}}
\def \T {{\mathcal T}}

\def \RR {{\mathbb R}}

\def\hyp {\textbf{\textrm{\textsc{(mp)}}}}

\begin{document}

\title[\tiny{Integral closures in real algebraic geometry}]
{Integral closures in real algebraic geometry}
\author{Goulwen Fichou, Jean-Philippe Monnier and Ronan Quarez}

\address{Goulwen Fichou\\
Univ Rennes\\
IRMAR (UMR 6625), Campus de Beaulieu, 35042 Rennes Cedex, France}
\email{goulwen.fichou@univ-rennes1.fr}

\address{Jean-Philippe Monnier\\
   LUNAM Universit\'e, LAREMA, Universit\'e d'Angers}
\email{jean-philippe.monnier@univ-angers.fr}

\address{Ronan Quarez\\
Univ Rennes\\
Campus de Beaulieu, 35042 Rennes Cedex, France}
\email{ronan.quarez@univ-rennes1.fr}
\date\today
\subjclass[2010]{14P99,13B22,26C15}
\keywords{real algebraic geometry, normalization, regular functions, continuous rational functions}

\begin{abstract} We study the algebraic and geometric properties of
  the integral closure of different rings of functions on a real
  algebraic variety : the regular functions and the continuous rational functions.
\end{abstract}

\maketitle



The normalization of an algebraic variety is obtained by an algebraic
process, corresponding in affine charts to taking the integral closure of the coordinate rings of
the affine components in their rings of fractions. The normalization
of a given algebraic variety $X$ satisfies two universal properties : it is
the biggest algebraic variety with a finite birational morphism onto
$X$ and it is also the smallest normal algebraic variety with a
morphism onto $X$. The normalization can be though of as a kind of
weak polynomial desingularization of
an algebraic variety, but much closer to the original variety due to
the finiteness property.
When dealing with real algebraic varieties however, the normalization procedure may create surprising phenomena, like the lack of surjectivity or the appearance of isolated singular points. The present paper is dedicated to the study of the normalization of real algebraic varieties, together with the integral closure of natural rings of functions on real algebraic varieties, namely the regular functions and the continuous rational functions.

As usual in real algebraic geometry, it is sufficient to
understand the affine case and even to work with real algebraic sets
(which are the real closed points of affine real algebraic varieties)
as explained in \cite{BCR}. A particular difference with affine
algebraic geometry over $\C$ is the fact that polynomial and regular
functions no longer coincide, therefore it is particularly interesting to investigate the relation between the integral closure of polynomial and regular functions. The normalization $X'$ of a
real algebraic set $X$ is obtained classically by the following procedure : the
ring of polynomial functions $\Pol(X')$ on $X'$ is the integral
closure of the ring of polynomial functions $\Pol(X)$ on $X$ in its
total ring of fractions $\K(X)$. We provide in the first section some examples to illustrate some pathological behavior with respect to the topology of the varieties, in relation of the notion of central locus of a real algebraic set.

The regular functions on a real algebraic
set $Y$ are the rational functions with no real poles (e.g. $1/(1+x^2)$ on
$\R$), they form the ring $\SO(Y)$ of regular functions on $Y$ that contains $\Pol(Y)$. These functions are the natural functions when dealing with (the real points of) real varieties, since they forget about the poles lying in the complex part. A significant part of the paper is dedicated to the study of the behavior of
regular functions during the process of normalization. In particular we compare the integral closure of $\SO(X)$ in $\K(X)$ with
$\SO(X')$. We discuss also when the integral closure of the ring of regular functions may be the ring of regular function of a real algebraic set. The material developed there leads to consider real algebraic sets with a totally
real normalization, which correspond to those real algebraic sets for which the rings $\SO(X)$ and $\SO(X')$ coincide. As shown in the third section, many good properties of the normalization for complex algebraic varieties are shared by real algebraic sets with a totally real normalization.

If the normalization of a given algebraic variety $X$ is the biggest algebraic variety finitely birational morphism $X$, one may wonder whether there exists a biggest real algebraic set finitely biregular to a given real algebraic set. To answer positively this question, we investigate in the fourth section the integral closure of $\Pol(X)$ in $\SO(X)$. It is
the ring of polynomial functions of a real algebraic set $X^b$ called
the biregular normalization of $X$. We prove, in particular, that
$X^b$ is the biggest real algebraic set with a finite polynomial maps onto
$X$ which is a biregular isomorphism (i.e that induces an isomorphism
between the rings of regular functions).

Even more flexible than regular functions, we consider also the class of continuous rational functions on a real algebraic set
$X$. A continuous rational functions on $X$ is a continuous function which
is regular on a dense Zariski open subset of $X$ (e.g $x^3/(x^2+y^2)$
on $\R^2$). The concept of continuous rational maps or functions
was used the first time by W. Kucharz \cite{Ku} in order to
approximate continuous maps into spheres. These functions have become
recently the object of a lot of research \cite{FHMM, FMQ, FMQ2, KuKu1,
  KuKu2, KN, Mo}. Koll\'ar \& Nowak \cite{KN} initiated the proper
study of continuous rational functions, proving notably that the
restriction of such a function to an algebraic subset of $X$ does not remain rational in general. It is however
the case as soon as $X$ is nonsingular
\cite[Prop. 8]{KN}. As a
consequence, one may consider the
ring $\SR(X)$ of continuous rational functions, and its subring
$\SRR(X)$ consisting of those continuous rational functions which
remain rational under restriction. This class, called hereditarily
rational in \cite{KN} and regulous in \cite{FHMM}, has been systematically studied in \cite{FHMM}.
It is notably shown in \cite{FHMM} that the use of the sheaf of regulous functions
instead of the sheaf of regular functions corrects some defects of the
classical real algebraic geometry like the absence of an usual
Nullstellensatz and Cartan theorems A and B. In general, we do not have
injections of $\SR(X)$ and $\SRR(X)$ in $\K(X)$ even when $X$ is
irreducible. For that reason, we only consider rational
continuous functions on the central locus $\Cent X$ of an irreducible
real algebraic set $X$. The central locus of an irreducible real
algebraic set is the closure for the Euclidean topology of the set
of smooth points or equivalently it is the locus of points where the
semi-algebraic dimension is maximal. The fifth section is dedicated to
the study of the integral closure of $\SR(\Cent X)$ (the ring of
continuous rational functions on the central locus) in $\K(X)$. In
case $X$ is non-singular, we prove in particular that $\SR(X)$ is
integrally closed. In case $X$ is a curve, we show that the integral
closure of $\SR(\Cent X)$ in $\K(X)$ is $\SO(X')$.

\section{Preliminaries}
In this section we review the basic definition of a real algebraic
variety together with the properties of its normalization. We recall 
the concept of continuous rational functions and hereditarily rational
functions.

\subsection{Real algebraic sets and varieties}
We are interested in this text in the geometry of the real closed points of real algebraic varieties. In this context, it is natural to consider only varieties which are affine since almost all real algebraic
varieties are affine \cite[Rem. 3.2.12]{BCR}. 
We also consider real algebraic sets which are the real closed points of affine real algebraic
varieties. We refer to \cite{Man} for basics on real algebraic
varieties and $\R$-schemes.

More precisely, to a real algebraic variety given by the ideal 
$I$ in $\R[x_1,\ldots,x_n]$ one associates the 
real algebraic set $X=\Z(I)$ of all points in $\R^n$ 
which cancel any polynomial in $I$. 
Conversely, to any real algebraic set $X\subset \R^n$ one may associate
the real algebraic variety given by the ideal $\I(X)\subset \R
[x_1,\ldots,x_n]$ of all polynomials
which vanish at all points of $X$.
Unless specified, all algebraic sets we consider are real.

In complex affine algebraic geometry, polynomial and regular functions
coincide and thus we have a unique and natural definition of morphism
between complex algebraic sets. In the real setting no such natural
definition exists. Indeed, 
the ring of regular functions $\SO(X)$ is the ring of rational
functions with no poles on $X$ (see \cite[Sect. 3.2]{BCR} for details)
which is strictly bigger (if $\dim X>0$) than the ring of polynomial functions $\Pol(X)=\R
[x_1,\ldots,x_n]/I$ where $I=\I(X)$ on a real algebraic set $X$. 

Let $X\subset\R^n$ be a real algebraic set. The complexification of
$X$, denoted by $X_{\C}$, is the complex algebraic set
$X_{\C}\subset\C^n$, whose ring of polynomial functions is
$\Pol(X_{\C})=\Pol(X)\otimes_{\R}\C$. As already mentionned, we have
$\Pol(X_{\C})=\SO(X_{\C})$. Remark that if $X$ is irreducible, then
$X_{\C}$ is automatically irreducible (because $X$ is an algebraic
set).

Let $X\subset\R^n$ and $Y\subset\R^m$ be real algebraic sets. A
polynomial map from $X$ to $Y$ is a map whose
coordinate functions are polynomial. A polynomial map $\varphi:X\rightarrow Y$ induces an $\R$-algebra
homomorphism $\phi:\Pol(Y)\rightarrow \Pol(X)$ defined by
$\phi(f)=f\circ\varphi$. The map $\varphi\mapsto \phi$
gives a bijection
between the set of polynomial maps from $X$ to $Y$ and
the $\R$-algebra homomorphisms from $\Pol(Y)$ to $\Pol(X)$. We say that
a polynomial map $\varphi:X\rightarrow Y$ is an
isomorphism if $\varphi$ is bijective with
a polynomial inverse, or in another words if
$\phi:\Pol(Y)\rightarrow \Pol(X)$ is an
isomorphism. 
We define the analog notions with regular functions in place of polynomials ones. In that situation, an isomorphism will be called a biregular isomorphism. 
Remark that a polynomial map $\varphi:X\rightarrow Y$
extends to a polynomial or regular map $\varphi_{\C}:X_{\C}\rightarrow
Y_{\C}$ but in general a regular map cannot be extended to a regular
map on the complexifications. Unless specified, all the maps are polynomial.

\subsection{Normalization and integral closure}
Let us start with the standard abstract algebraic setting.

Let $A\to B$ be an extension of rings. An
element $b\in B$ is integral over $A$ if $b$ is the root of a monic
polynomial with coefficients in $A$. By \cite[Prop. 5.1]{AM}, $b$ is
integral over $A$ if and only if $A[b]$ is a finite $A$-module. This
equivalence allows to prove that $A_B'=\{b\in B|\,b\, {\rm is\,
  integral\, over}\,A\}$ is a ring called the integral closure of $A$ in
$B$. The extension $A\to B$ is said to be integral if $A_B'=B$. 

Since we will deal with the normalization of (non-necessarily irreducible) algebraic varieties, 
one has to deal with rings $A$ which are not necessarily
domains but only reduced rings.
Hence, in the following $A$ will be a reduced ring admitting 
a finite number of minimal prime ideals and
$B$ be the total ring of fractions $K$ of $A$ (see below), the ring $A_K'$ is
denoted by $A'$ and is simply called the integral closure of $A$.
The ring $A$ is called integrally closed (in $B$) if
$A=A'$ ($A=A_B'$). 

Recall that if $A$ is a reduced ring with minimal prime ideals  
$\p_1,\ldots,\p_r$, then $(0)=\p_1\cap\ldots \cap \p_r$ and one has the canonical injections 
$A\rightarrow A_1\times\ldots\times A_r\rightarrow K_1\times \ldots\times K_r=K$
where $A_i=A/\p_i$ and $K_i$ is the fraction field of $A_i$ for any $i$. 
The product of fields $K$ is called the total ring of fractions of $A$ and 
the $A'_i$'s are called the irreducible components of $A$.

\begin{prop}\label{ReducedIntegralClosure}
Let $A$ be a reduced ring with minimal prime ideals $\p_1,\ldots,\p_r$.
Then, $A'=A'_1\times\ldots\times A'_r$ where 
$A'$ is the integral closure of $A$ in the total ring of fractions $K$, and, for any $i$, $A'_i$ is
the integral closure of $A_i$ (in $K_i$).
\end{prop}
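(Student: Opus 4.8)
The plan is to reduce the statement to two standard facts about integrality, namely that integral closures of finite products are computed factor by factor, and that integral dependence is transitive. Throughout I regard $A$ as a subring of $A_1\times\cdots\times A_r$ via $a\mapsto(a+\p_1,\ldots,a+\p_r)$ and $A_1\times\cdots\times A_r$ as a subring of $K=K_1\times\cdots\times K_r$, so that every ring in play sits inside $K$. The whole argument will then consist of three comparisons inside $K$: first that $A_1\times\cdots\times A_r$ is integral over $A$; second that consequently the integral closure of $A$ in $K$ equals the integral closure of $A_1\times\cdots\times A_r$ in $K$; and third that this last closure is exactly $A'_1\times\cdots\times A'_r$.

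First I would record the key structural observation that $A_1\times\cdots\times A_r$ is integral over $A$. Let $e_i\in K$ denote the idempotent equal to $1$ in the $i$-th factor and $0$ elsewhere; each $e_i$ already lies in $A_1\times\cdots\times A_r$ and satisfies the monic relation $T^2-T=0$ with coefficients in $A$, so $e_i\in A'$. Moreover, lifting any tuple $(a_1,\ldots,a_r)$ to elements $\tilde a_i\in A$ and using $e_ie_j=0$ for $i\neq j$, one checks that $(a_1,\ldots,a_r)=\sum_i\tilde a_i\,e_i$, so that $A_1\times\cdots\times A_r=A[e_1,\ldots,e_r]$ is generated as an $A$-module by $e_1,\ldots,e_r$. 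Being a finite $A$-module, it is integral over $A$ by \cite[Prop. 5.1]{AM}. Transitivity of integral dependence then gives the second comparison: one inclusion is automatic since $A\subseteq A_1\times\cdots\times A_r$, and conversely any $b\in K$ integral over $A_1\times\cdots\times A_r$ is integral over $A$ because $A_1\times\cdots\times A_r$ is itself integral over $A$.

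It remains to identify the integral closure of $A_1\times\cdots\times A_r$ in $K_1\times\cdots\times K_r$ with $A'_1\times\cdots\times A'_r$. For the inclusion into $A'_1\times\cdots\times A'_r$, I would apply the $i$-th projection $\pi_i\colon K\to K_i$, which is a ring homomorphism, to a monic integral relation for $b=(b_1,\ldots,b_r)$; this produces a monic relation for $b_i$ with coefficients in $A_i$, so $b_i\in A'_i$. For the reverse inclusion, given $b_i\in A'_i$ with monic relations $P_i\in A_i[T]$ of degree $n_i$, set $n=\max_i n_i$ and replace $P_i$ by $T^{\,n-n_i}P_i(T)$, still monic of degree $n$ and still annihilating $b_i$; assembling these factorwise yields a single monic polynomial of degree $n$ with coefficients in $A_1\times\cdots\times A_r$ that annihilates $b$, so $b$ is integral. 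Combining the three comparisons gives $A'=A'_1\times\cdots\times A'_r$, as desired.

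I expect the only genuinely delicate point to be the first step, the passage between $A$ and the a priori strictly larger ring $A_1\times\cdots\times A_r$; this is exactly where the reducedness of $A$ (giving $(0)=\p_1\cap\cdots\cap\p_r$ and hence the embedding into the product) and the orthogonal idempotents $e_i$ enter. Once the integrality of $A_1\times\cdots\times A_r$ over $A$ is established, everything else is the routine product-of-rings computation together with transitivity.
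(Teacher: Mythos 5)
Your proposal is correct and follows essentially the same route as the paper: one containment by projecting a monic relation onto each factor, and the other via the idempotents $e_i$ being integral over $A$ (the paper states this step tersely; you fill in the details by showing $A_1\times\cdots\times A_r=A[e_1,\ldots,e_r]$ is a finite $A$-module and invoking transitivity together with the degree-padding trick for products). No gaps.
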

\begin{proof}
Let $f\in K$ such that $P(f)=0$ where $P$ is a unitary polynomial with coefficients 
in $A$. Let us write $f=(f_1,\ldots,f_r)$ where, for each $i$, $f_i\in
K_i$. We may write $Q=Q_1\times\ldots \times Q_r$ with $Q_i$ a unitary polynomial with coefficients in $A_i$.
Then, $P_i(f_i)=0$ in $A_i$ which means that $f_i\in A'_i$.

Conversely, let $f=(f_1,\ldots,f_r)\in K$ be such that each $f_i$ is
integral over $A_i$. Since the idempotents
of $K_1\times \ldots\times K_r$ are also integral over $A$, then one gets that $f\in A'$ and concludes the proof.
\end{proof}

Let us emphasize now what happen in the geometric setting, namely when one 
takes for $A$ the
ring of polynomial functions on an algebraic set $X$ over a
field $k$ ($k=\R$ or $\C$) and when $K$ is total ring of fractions of $X$ denoted in
the following by $\K(X)$. Then, $A'$ is a finite $A$-module (a theorem of Emmy Noether
\cite[Thm. 4.14]{Ei}) and thus $A'$ is a finitely generated $k$-algebra and
so $A'$ is the ring of polynomial functions of an
irreducible algebraic set, denoted by $X'$, called the normalization
of $X$. By Proposition \ref{ReducedIntegralClosure}, we have
$X'=\bigsqcup_{i=1}^r X_{i}'$ where $X_1,\ldots,X_r$ are the
irreducible components of $X$.
We recall that a map $X\rightarrow Y$ between two
algebraic sets over a field $k$ is said finite (resp. birational) if the ring morphism
$\Pol(Y)\rightarrow \Pol(X)$ makes $\Pol(X)$ a finitely generated
$\Pol(Y)$-module (resp. if it induces a biregular isomorphism between
two dense Zariski open subsets of $X$ and $Y$ or equivalently if the
ring morphism $\K(Y)\to \K(X)$ is an isomorphism). For instance, the
projection of the nodal curve given by $y^2-x^2-x^3=0$ on the $x$-axis
is finite but not birational.

The inclusion $A\subset A'$ induces a finite birational map
which we denote by $\pi':X'\rightarrow X$, called the normalization map. We say that an
algebraic variety $X$ over a field $k$ 
is normal if its ring of polynomial functions is integrally
closed.

For a real algebraic set $X\subset \R^n$, we say that $X$ is
geometrically normal if the associated complex algebraic set
$X_{\C}$ is normal. It is well known that $X$ is normal if and only if
$X$ is geometrically normal. 

Note that the normalization of an algebraic set $X$ is the biggest algebraic set finitely birational to $X$. More precisely, for any finite birational map $\varphi:Y\rightarrow X$, there exists $\psi:X'\to Y$ such that $\pi'=\phi \circ \psi$.

Note that finite birational maps behaves nicely with respect to the Euclidean topology.

\begin{prop}\label{lem-closed} Let $\pi:Y\to X$ be a finite birational map between irreducible
algebraic sets. Then $\pi$ is proper and closed for the Euclidean topology. If moreover $\pi$ is injective, then $\pi$ is closed for the constructible topology.
\end{prop}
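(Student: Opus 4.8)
The plan is to establish the three assertions separately: properness and Euclidean closedness are formal consequences of integrality and local compactness, whereas the constructible statement is the substantial one.

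For properness, I would exploit that finiteness makes $\Pol(Y)$ a finite module over the subring $\pi^{*}\Pol(X)\subset\Pol(Y)$, so that every coordinate function $y_{j}$ of the embedding $Y\subset\R^{m}$ is integral over $\pi^{*}\Pol(X)$, satisfying a monic relation $y_{j}^{d_{j}}+\sum_{k<d_{j}}(a_{jk}\circ\pi)\,y_{j}^{k}=0$ with $a_{jk}\in\Pol(X)$. For a compact $K\subset X$ the continuous functions $a_{jk}$ are bounded on $K$, and the standard estimate for the roots of a monic polynomial in terms of its coefficients then bounds each $y_{j}$ on $\pi^{-1}(K)$; since $\pi^{-1}(K)$ is closed by continuity of $\pi$, it is compact, which is exactly properness for the Euclidean topology.

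Euclidean closedness would then follow from the general principle that a proper continuous map into a locally compact Hausdorff space is closed. Concretely, given a closed $C\subset Y$ and a point $x$ in the Euclidean closure of $\pi(C)$, I would take a compact neighbourhood $V$ of $x$ in $X\subset\R^{n}$, note that $C\cap\pi^{-1}(V)$ is compact by properness, hence that $\pi(C\cap\pi^{-1}(V))$ is compact and closed and contains all points of $\pi(C)$ lying in $V$, forcing $x\in\pi(C)$.

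The last assertion is where I expect the genuine difficulty, and where injectivity is indispensable. My plan is to pass to the real spectra and consider the map $\spr\Pol(Y)\to\spr\Pol(X)$ induced by $\pi^{*}$: it is continuous for the constructible topology, for which both spectra are compact and Hausdorff, so it is automatically closed there. The task is to descend this to the real algebraic sets, viewed inside their real spectra as the dense sets of real closed points; by Tarski--Seidenberg the image $\pi(Y)$ is a constructible, hence clopen, subset of $X$. Here injectivity of $\pi$, together with the isomorphism of total rings of fractions provided by birationality, should force the induced spectral map to be injective over $\pi(Y)$, so that $\pi$ becomes a homeomorphism of $Y$ onto $\pi(Y)$ for the constructible topology and therefore carries constructible-closed sets to constructible-closed sets. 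The main obstacle is precisely this descent: the Euclidean properness used above is invisible to the constructible topology, so one cannot invoke it directly and must instead control the behaviour of the map on the non-real points of the real spectrum lying over the image, leveraging injectivity on real points jointly with birationality.
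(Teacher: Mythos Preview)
Your treatment of properness and Euclidean closedness is correct and in fact more elementary than the paper's argument: the paper passes through the real spectrum, invoking that the induced map $\spr\Pol(Y)\to\spr\Pol(X)$ is closed \cite[Ch.~2, Prop.~4.2--4.3]{ABR} and then transporting this to semi-algebraic closedness via the correspondence of \cite[Thm.~7.2.3]{BCR}, whereas you extract boundedness of the coordinates directly from the integral equations. Both approaches work; yours avoids the spectral machinery entirely for this part.

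The gap is in the constructible statement. Recall that the constructible topology here has as closed sets the \emph{Zariski}-constructible sets that are Euclidean-closed. You already know $\pi(C)$ is Euclidean-closed; the whole content is that it remains Zariski-constructible. Your plan conflates two different ``constructible'' notions: the constructible topology on $\spr\Pol(X)$ (where closedness of the spectral map is automatic by compactness) detects only semi-algebraic structure, and Tarski--Seidenberg likewise only yields that $\pi(Y)$ is semi-algebraic, not Zariski-constructible. There is no mechanism in your outline that upgrades ``closed semi-algebraic image'' to ``Zariski-constructible image''; injectivity on real closed points combined with birationality does not force injectivity of the spectral map over $\pi(Y)$, and even if it did, that would still leave you inside the semi-algebraic world.

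The paper does not attempt to prove this from scratch: it simply invokes \cite[Cor.~4.9]{KP}, a result from the Kurdyka--Parusi\'nski theory of arc-symmetric sets, which says precisely that an injective regular map sends Zariski-constructible Euclidean-closed sets to sets of the same type. That theory provides the missing ingredient (roughly, arc-lifting along the inverse of an injective map), and it is not visible from the real-spectrum viewpoint you adopt. If you want a self-contained argument you will need to engage with arc-symmetry or an equivalent tool; the obstacle you identify at the end is real and is exactly what \cite{KP} is there to resolve.
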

By constructible topology, we mean the topology which closed sets are given by Zariski constructible sets closed for the Euclidean topology.

\begin{proof}[Proof of Proposition \ref{lem-closed}]
Note that the ring morphism $\Pol(X)\rightarrow \Pol(Y)$ is injective since
$\pi$ is birational. We are going to prove that the map $\pi$ is
closed and proper with respect to the real spectrum topology (see the
section ``Abstract ring of regular functions'' for basics on the real spectrum), then with respect to
the semi-algebraic topology and finally with respect to the Euclidean topology.
By \cite[Ch. 2, Prop 4.2-4.3]{ABR}, the induced map
$\Sp_r\Pol(Y)\rightarrow \Sp_r\Pol(X)$ is closed for the real
spectrum topology. According to \cite[Theorem 7.2.3]{BCR}, there is a 
bijective correspondence between closed semi-algebraic subsets of 
$X$ (resp. $Y$) and closed constructible subsets of the real spectrum $\Sp_r \Pol(X)$
(resp. the real spectrum $\Sp_r \Pol(Y)$). It follows that the image
by $\pi$ of every closed semi-algebraic subset of $Y$ is a closed
semi-algebraic subset of $X$.  
Now it is classical (\cite{vdd} for instance) to conclude 
that $\pi$ is closed and proper for the Euclidean topology.

If $\pi$ is moreover injective, the image by $\pi$ of a Zariski constructible closed subset of $Y$ is a Zariski constructible closed subset of $X$ by \cite[Cor. 4.9]{KP}.
\end{proof}

\subsection{Surjectivity issues and central locus} 
Note that for real algebraic varieties, the normalization map $\pi':X'\to X$
may be non-surjective (consider for instance the cubic $X=\Z(y^2-x^2(x-1))$ with an isolated point at the origin) while the normalization of the complexification $\pi_{\C}':X_{\C}'\to X_{\C}$ is always surjective. 

A similar phenomenon appears in the process of resolving the singularities of a real algebraic varieties. 
We say that a regular map $Y\to X$ is a resolution
of singularities (or a desingularization) if it is a proper birational regular map such
that $Y$ is non-singular. The normalization can be though of as a kind of
weak polynomial desingularization of
an algebraic variety, but much closer to the original variety due to
the finiteness property. For curves, the normalization gives a
polynomial resolution of singularities.

To keep a notion of surjectivity for a resolution of singularities over the real points, we use the concept of central locus of a real algebraic set.

\begin{defn}
\label{centrale}
Let $X$ be an irreducible algebraic set, and denote by $X_{reg}$ the set of non-singular points of $X$. The central locus $\Cent X$ of $X$ is defined to be the Euclidean closure of $X_{reg}$ in $X$.
We say that $X$ is central if $X=\Cent X$.
\end{defn}

\begin{rem}
\label{centraledim}
Let $X$ be an irreducible algebraic set. By \cite[Prop. 7.6.2]{BCR}, $\Cent X$ is
the locus of points of $X$ where the local semi-algebraic dimension is
maximal.
\end{rem}

The central locus well behaves during the process of
desingularization. 
\begin{prop}\cite[Prop. 2.33]{Mo} and \cite[Thm. 2.6, Cor. 2.7]{KK}.\\
\label{centralsurj2}
Let $\pi:Y\to X$ be a resolution of singularities. Then $\pi:\Cent Y=Y\to\Cent X$ is well
defined, surjective and closed for the Euclidean topology.
\end{prop}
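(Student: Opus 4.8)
The plan is to dispose of the three assertions in turn, using the characterization of $\Cent X$ from Remark \ref{centraledim} as the locus of points of $X$ of maximal local semialgebraic dimension $d=\dim X$. First, since $Y$ is nonsingular every point is smooth, so $Y_{reg}=Y$ and $\Cent Y=\overline{Y_{reg}}=Y$, which settles the equality $\Cent Y=Y$; moreover $Y$ is then a Nash manifold of pure dimension $d$.

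For well-definedness I would exploit birationality. Choose dense Zariski open sets $V\subseteq Y$ and $U\subseteq X$ with $\pi|_V:V\to U$ a biregular isomorphism; as $Y$ is irreducible, $V$ is dense, and being a $d$-manifold with $Y\setminus V$ of dimension $<d$, the set $V$ is even Euclidean-dense in $Y$. For $q\in V$ the map $\pi$ is a semialgebraic local homeomorphism near $q$, so $\pi(q)$ has local dimension $d$ and hence $\pi(q)\in\Cent X$ by Remark \ref{centraledim}; thus $\pi(V)\subseteq\Cent X$. Since $\Cent X$ is Euclidean-closed, $\pi$ is continuous and $V$ is Euclidean-dense, we get $\pi(Y)=\pi(\overline V)\subseteq\overline{\pi(V)}\subseteq\Cent X$, so $\pi:Y\to\Cent X$ is well defined.

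For surjectivity I would first establish that $\pi$ is closed for the Euclidean topology. This is where the properness of the resolution enters: $\pi$ is proper as a regular map, hence proper for the Euclidean topology, and a proper continuous map into the locally compact Hausdorff space $X$ is closed; consequently $\pi(Y)$ is Euclidean-closed in $X$. Now $\pi(Y)\supseteq\pi(V)=U$, and $U\subseteq\Cent X$ with $\Cent X\setminus U$ of dimension $<d$; since $\Cent X$ is pure of dimension $d$, $U$ is Euclidean-dense in $\Cent X$, whence $\pi(Y)\supseteq\overline U=\Cent X$. Together with the previous paragraph this gives $\pi(Y)=\Cent X$. Finally, closedness of $\pi:Y\to\Cent X$ is inherited: for closed $C\subseteq Y$ the set $\pi(C)$ is closed in $X$ and contained in $\Cent X$, hence closed in $\Cent X$ for the subspace topology.

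The main obstacle I anticipate is the passage from the properness of $\pi$ as an abstract morphism to properness and closedness in the Euclidean topology: a naive compactness argument is delicate over $\R$, and the clean route, mirroring the proof of Proposition \ref{lem-closed}, is to first pass to the real spectrum, where the induced map is closed, then descend to the semialgebraic and finally the Euclidean topology. The supporting density statements are elementary but rely essentially on the pure $d$-dimensionality of $Y$, of $X_{reg}$ and of $\Cent X$; this pure-dimensionality is precisely what fails on $X\setminus\Cent X$, which is why the target must be the central locus rather than all of $X$.
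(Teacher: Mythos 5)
The paper offers no proof of this proposition: it is quoted directly from \cite[Prop. 2.33]{Mo} and \cite[Thm. 2.6, Cor. 2.7]{KK}, so there is no internal argument to compare against. Taken on its own terms, your direct derivation is essentially correct: $\Cent Y=Y$ is immediate from nonsingularity; the inclusion $\pi(Y)\subseteq\Cent X$ via the Euclidean density of the isomorphism locus $V$ in the pure $d$-dimensional manifold $Y$, together with Remark \ref{centraledim}, is sound; and surjectivity and closedness then follow correctly from the closedness of $\pi$ in the Euclidean topology plus the density of $U=\pi(V)$ in the pure $d$-dimensional set $\Cent X$. The one point that needs care is the step you yourself flag, and your proposed remedy there is not quite right: the real-spectrum argument of Proposition \ref{lem-closed} rests on \cite[Ch. 2, Prop. 4.2--4.3]{ABR}, which concerns \emph{finite} (integral) extensions, whereas a resolution of singularities is proper but in general not finite (the exceptional fibres are positive-dimensional), so that route does not transfer verbatim. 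What you actually need is the general fact that a proper morphism of real algebraic varieties induces a map on real points that is proper, hence closed, for the Euclidean topology; this is true and standard, and it is precisely the content supplied by the cited results of Kurdyka and Monnier (via arc-symmetric/semialgebraic properness). With that substitution your argument is complete, and it has the merit of isolating exactly which external input is needed, in contrast with the paper's wholesale citation.
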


Notice that the central locus also well behave under finite birational maps.
\begin{prop}
\label{centralsurj}
Let $\pi:Y\to X$ be a finite birational map between irreducible
algebraic sets. The $\pi_{|\Cent Y}:\Cent Y\to\Cent X$ is well-defined, surjective and closed for the Euclidean topology.
\end{prop}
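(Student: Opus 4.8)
The plan is to treat the three claims---well-definedness, surjectivity and closedness---by deducing closedness directly from the global closedness of $\pi$ (Proposition \ref{lem-closed}), and by obtaining well-definedness and surjectivity together through a reduction to the case of a resolution of singularities, where Proposition \ref{centralsurj2} already gives the answer.

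First I would settle closedness. By Proposition \ref{lem-closed} the map $\pi\colon Y\to X$ is closed for the Euclidean topology, and by Definition \ref{centrale} the set $\Cent Y$ is Euclidean closed in $Y$. Hence any subset of $\Cent Y$ that is closed in the subspace topology is already closed in $Y$, so its image under $\pi$ is closed in $X$; once we know $\pi(\Cent Y)\subseteq\Cent X$ this image is moreover closed in $\Cent X$. Thus the restriction $\pi_{|\Cent Y}\colon\Cent Y\to\Cent X$ is closed.

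The heart of the matter is to prove simultaneously $\pi(\Cent Y)\subseteq\Cent X$ (well-definedness) and $\Cent X\subseteq\pi(\Cent Y)$ (surjectivity), that is $\pi(\Cent Y)=\Cent X$. I would choose a resolution of singularities $\sigma\colon Z\to Y$, which exists by Hironaka's theorem and is a proper birational regular map with $Z$ nonsingular. The claim is that $\pi\circ\sigma\colon Z\to X$ is again a resolution of singularities: it is regular as a composition of regular maps, birational because both $\sigma$ and $\pi$ induce isomorphisms of total rings of fractions, proper as a composition of proper maps ($\pi$ being proper by Proposition \ref{lem-closed}), and $Z$ is nonsingular. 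Applying Proposition \ref{centralsurj2} to $\sigma$ yields $\sigma(Z)=\sigma(\Cent Z)=\Cent Y$, while applying it to $\pi\circ\sigma$ yields $(\pi\circ\sigma)(Z)=\Cent X$. Therefore $\pi(\Cent Y)=\pi(\sigma(Z))=(\pi\circ\sigma)(Z)=\Cent X$, which gives both inclusions at once.

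The step I expect to require the most care is checking that $\pi\circ\sigma$ genuinely qualifies as a resolution in the sense of Proposition \ref{centralsurj2}; in particular one must ensure that properness survives composition---this is exactly where finiteness of $\pi$, via Proposition \ref{lem-closed}, is needed---and that ``birational'' is read at the level of total rings of fractions so that the composite stays birational. Should one prefer to avoid invoking resolution of singularities, well-definedness can be argued directly: since $\pi$ is birational one has $\dim X=\dim Y=:d$, and for $y\in\Cent Y$ Remark \ref{centraledim} gives $\dim_y Y=d$; as $\pi$ has finite fibres, $\dim\pi(W)=\dim W$ for any semialgebraic $W$, so every Euclidean neighbourhood $N$ of $\pi(y)$ satisfies $\dim N\ge\dim\pi(W)=d$ for a small neighbourhood $W\subseteq\pi^{-1}(N)$ of $y$ with $\dim W=d$, whence $\dim_{\pi(y)} X=d$ and $\pi(y)\in\Cent X$ by Remark \ref{centraledim}. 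Surjectivity would then follow by choosing a dense Zariski-open $U\subseteq Y_{reg}$ on which $\pi$ restricts to a biregular isomorphism onto its image $V\subseteq X_{reg}$, and noting that $\Cent X=\overline{V}^{\,eucl}\subseteq\overline{\pi(\Cent Y)}^{\,eucl}=\pi(\Cent Y)$, the last equality coming from the Euclidean closedness of $\pi(\Cent Y)$ established above.
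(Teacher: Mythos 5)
Your proof is correct, but your primary route is genuinely different from the paper's. You factor through a resolution of singularities $\sigma\colon Z\to Y$, check that $\pi\circ\sigma$ is again a resolution (regularity, birationality and properness are stable under composition, properness of $\pi$ coming from Proposition \ref{lem-closed}), and read off $\pi(\Cent Y)=\pi(\sigma(Z))=(\pi\circ\sigma)(Z)=\Cent X$ from two applications of Proposition \ref{centralsurj2}; this gives well-definedness and surjectivity in one stroke. The paper argues directly and more elementarily: for well-definedness it takes a closed connected semi-algebraic neighbourhood $N$ of $y\in\Cent Y$, notes that $\pi(N)$ is closed of full semialgebraic dimension (by Proposition \ref{lem-closed} and finiteness/birationality of $\pi$), and concludes $\pi(y)\in\Cent X$ via the characterization of Remark \ref{centraledim}; for surjectivity it uses that $\pi$ is a bijection off semialgebraic subsets of strictly smaller dimension, so the Euclidean-closed set $\pi(\Cent Y)$ contains a dense subset of $\Cent X$. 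Your fallback argument in the last paragraph is essentially this proof. What your main route buys is brevity and a clean reduction to the already-quoted Proposition \ref{centralsurj2}; what it costs is the invocation of Hironaka's theorem together with the small verifications that $Z$ is again an irreducible affine real algebraic set Zariski dense in its nonsingular model and that $\pi\circ\sigma$ meets the paper's definition of a resolution --- none of which the paper needs here. Both arguments are sound.
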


\begin{proof}
Let $y\in \Cent Y$, and choose a closed connected semi-algebraic neighborhood $N$ of $y$ in $Y$. Then $\pi(N)$ is closed by Lemma \ref{lem-closed}, connected by continuity of $\pi$ and its semialgebraic dimension is equal to $\dim X$ by birationality of $\pi$. Therefore $\pi(y)$ belongs to $\Cent X$. 

Then $\pi(\Cent Y)$ is a closed semialgebraic subset of $\Cent X$, and there exist semialgebraic subsets $A\subset \Cent X$ and $B\subset \Cent Y$ of dimension strictly smaller such that $Y\setminus B$ is in bijection with $X\setminus A$, by birationality of $\pi$. Therefore $\pi(\Cent Y)=\Cent X$.
\end{proof}

These results will be useful is section \ref{sec-cont-rat} when
dealing with rational continuous functions.

Note however that the property of being central is not always preserved by finite and
birational maps. This pathology is illustrated by the following
example where the normalization of a central surface creates an isolated point.

\begin{ex}\label{grospoint2}
Consider the surface $S=\Z((y^2+z^2)^2-x(x^2+y^2+z^2))$ in $\R^3$. Then $S$ is central with a unique singular point at the origin. The singular set of its complexification consists of two complex conjugated curves crossing at the origin. The rational function $f=(y^2+z^2)/x$ satisfies the integral equation $f^2-f-x=0$. Let $Y$ be the surface in $\R^4$ admitting as ring of polynomial function $\Pol(Y)=\Pol(S)[(y^2+z^2)/x]$. We have
$$\Pol(Y)\simeq \dfrac{\R[x,y,z,t]}{((y^2+z^2)^2-x(x^2+y^2+z^2),t^2-t-x,xt-(y^2+z^2),(y^2+z^2)t-(x^2+y^2+z^2))}$$
and since
$(y^2+z^2)/x$ is integral over $\Pol(S)$ we get a finite birational map $\pi:Y\to S$. 
Note that $Y$ may be embedded in $\R^3$ via the projection forgetting the $x$ variable, giving rise to the surface defined by the equation $y^2+z^2=t^2(t-1)$ in $\R^3$. This surface is no longer central, with an isolated singular point at the origin. The preimage of the origin in $S$ consists of two points, the isolated point in $Y$ plus a smooth point in the two dimensional sheet of $Y$. Note that $Y$ is normal since its complexification is an (hyper)surface with a singular point. So $Y$ is the normalization of $S$.
\end{ex}

\section{Integral closure of the ring of regular functions}

We have recalled how the normalization of a variety is obtained via the integral closure of 
the coordinate ring into the 
total ring of fractions. The aim of this section is to study what happens when
one replaces the coordinate ring with the ring of regular functions.

Let $X$ be a real algebraic set.
The ring of regular functions on $X$
is the localization $\SO(X)=\S(X)^{-1}\Pol(X)$ of the ring $\Pol(X)$
of polynomial functions on $X$ with respect to the multiplicative
subset $\S(X)=\{f\in\Pol(X)|\;\Z(f)=\emptyset\}$ of nowhere vanishing
functions \cite[Defn. 3.2.1, Prop. 3.2.3]{BCR}. Note that
non-isomorphic algebraic sets may share isomorphic rings of regular
functions, cf. \cite[Ex. 3.2.8]{BCR}.
Clearly, $\S(X)$ doesn't contain any zero divisor of $\Pol(X)$ (if
$pq=0$ and $\Z(p)=\emptyset$ then $\Z(q)=X$ i.e $q=0$) and
thus we get a natural injective morphism $$\Pol(X)\hookrightarrow \SO(X),\;f\mapsto\dfrac{f}{1}.$$

\subsection{Real prime ideals}

For a commutative ring $A$ containning $\QQ$ we denote by $\Sp A$ the Zariski spectrum of $A$,
the set of all prime ideals of $A$. We denote by $\MSp A$ the set of maximal ideals of $A$. 
In this work, we also consider the real Zariski spectrum $\ReSp A$ which consists
in all the real prime ideals of $A$. The set of maximal and real
ideals of $A$ will be denoted by $\ReMax A$.

Recall that an ideal $I$ of $A$ is called
real if, for every sequence $a_1,\ldots,a_k$ of elements of $A$, then
$a_1^2+\cdots+a_k^2\in I$ implies $a_i\in I$ for $i=1,\ldots,k$.

In the following $A$ will mainly stands for the ring $\Pol(X)$ or $\SO(X)$
and, whatever the considered ring, we denote by 
$\m_x$ the maximal ideal of functions that vanish at $x\in X$. It appears that
in $\SO(X)$ any maximal ideal is real:

\begin{prop}
\label{maxreg1}
We have $\MSp \SO(X)= \ReSp \SO(X).$ 

\end{prop}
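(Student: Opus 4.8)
The plan is to describe the maximal ideals of $\SO(X)$ explicitly through the localization $\SO(X)=\S(X)^{-1}\Pol(X)$ and to check that each of them has residue field $\R$; since a field in which $-1$ is not a sum of squares is formally real, this forces every maximal ideal to be real, which is the content of the statement. First I would recall the standard dictionary for localizations: the prime ideals of $\SO(X)$ are in bijection with the prime ideals $\p$ of $\Pol(X)$ disjoint from $\S(X)$, via $\p\mapsto \S(X)^{-1}\p$, and the maximal ideals of $\SO(X)$ correspond to those $\p$ that are maximal among the primes meeting $\S(X)$ trivially.

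The key step, and the only place where the real structure really enters, is to show that a prime $\p$ of $\Pol(X)$ with $\p\cap\S(X)=\emptyset$ necessarily has a real zero, that is $\Z(\p)\neq\emptyset$. I would argue by contradiction using the real Nullstellensatz: if $\Z(\p)=\emptyset$, then $-1\in\RadRe{\p}$, which provides an identity of the form $1+a_1^2+\cdots+a_k^2\in\p$ for some $a_i\in\Pol(X)$. But $1+a_1^2+\cdots+a_k^2$ is everywhere $\geq 1$ on $X$, hence lies in $\S(X)$, contradicting $\p\cap\S(X)=\emptyset$. This is exactly the mechanism by which localizing at nowhere vanishing functions kills the non-real maximal ideals of $\Pol(X)$, such as $(x^2+1)$ in $\R[x]$, which are precisely those meeting $\S(X)$.

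Once $\Z(\p)\neq\emptyset$, I would pick a point $x\in\Z(\p)$, so that $\p\subseteq\m_x$, where $\m_x\subset\Pol(X)$ is the maximal real ideal of polynomial functions vanishing at $x$. Since every element of $\S(X)$ is nonzero at $x$, we have $\m_x\cap\S(X)=\emptyset$, so $\m_x$ extends to a proper prime ideal $\S(X)^{-1}\m_x$ of $\SO(X)$ whose residue ring is obtained from $\Pol(X)/\m_x\cong\R$ by inverting the (nonzero) images of $\S(X)$, hence is again $\R$. Maximality then forces the given maximal ideal to coincide with $\S(X)^{-1}\m_x$, and the residue field $\R$ being formally real shows this ideal is real. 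This yields $\MSp\SO(X)\subseteq\ReSp\SO(X)$, and since a maximal real ideal is in particular a maximal ideal, one recovers the stated identity $\MSp\SO(X)=\ReMax\SO(X)$. The main obstacle is the non-emptiness statement of the second paragraph, where the real Nullstellensatz input is indispensable and where the construction genuinely departs from the case of $\Pol(X)$.
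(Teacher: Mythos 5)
Your proof is correct, but it takes a genuinely different route from the paper's. The paper argues entirely inside $\SO(X)$: given a maximal ideal $\m$ with $f_1^2+\cdots+f_k^2\in\m$ and $f_1\notin\m$, it uses maximality to write $gf_1=1+h$ with $h\in\m$, and deduces that the element $1+\sum_{i=2}^k g^2f_i^2$ --- a $1$ plus a sum of squares of regular functions, hence nowhere vanishing and invertible in $\SO(X)$ --- lies in $\m$, a contradiction. That argument is purely formal, its only input being the invertibility of $1+\sum\SO(X)^2$, which is why it transports verbatim to the abstract setting of Corollary \ref{maxreg1bis}. You instead pull everything back to $\Pol(X)$ via the localization dictionary and invoke the real Nullstellensatz to show that a prime $\p$ with $\p\cap\S(X)=\emptyset$ has a nonempty real zero set (if $\Z(\p)=\emptyset$ then some $1+\sum a_i^2\in\p\cap\S(X)$), then identify the maximal ideal with $\S(X)^{-1}\m_x$ for some $x\in\Z(\p)$ and read off reality from the residue field $\R$. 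Your route costs you the Nullstellensatz (Artin--Lang) where the paper needs nothing, but it buys more: you simultaneously obtain the explicit description of $\Max\SO(X)$ as $\{\m_x\,:\,x\in X\}$, which is exactly the content of the subsequent Proposition \ref{maxreg2}, derived there in essentially the way you do. One point you handled correctly without comment: the displayed statement reads $\ReSp$, but the intended (and proved) assertion is $\MSp\SO(X)=\ReMax\SO(X)$, since real primes of $\SO(X)$ need not be maximal once $\dim X\geq 2$; both your proof and the paper's establish precisely that every maximal ideal is real.
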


\begin{proof}
Assume
$f_1^2+\cdots+f_k^2\in\m$ for some $f_i\in\SO(X)$ and suppose
moreover that $f_1\not\in \m$. If $k=1$ then we get a
contradiction since a maximal ideal is radical. So assume $k>1$. Since $\m$ is maximal,
there exist $g\in \SO(X)$ and $h\in \m$ such that
$gf_1=1+h$. We get $g^2f_1^2+\cdots+g^2f_k^2\in\m$ and
$g^2f_1^2=(1+h)^2=1+h'$ with $h'\in\m$. Hence the invertible function $1+\sum_{i=2}^k
g^2f_i^2$ belongs to $\m$, leading to a contradiction.
\end{proof}

Using then the real Nullstellensatz
\cite[Thm. 4.1.4]{BCR}, one can prove that 
the sets $\MSp \SO(X)$ and $\ReMax \Pol(X)$ are in bijection with $X$. More precisely:

\begin{prop}
\label{maxreg2}
Any maximal ideal of $\SO(X)$ or $\ReSp \Pol(X)$ has the form $\m_x$
for some $x\in X$. Moreover, for $x\in X$ we
have $$\Pol(X)_{\m_x}= \SO(X)_{\m_x}=\SO_{X,x}.$$
\end{prop}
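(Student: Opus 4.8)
The plan is to treat the two assertions separately, and within the first to handle $\Pol(X)$ and $\SO(X)$ by different means, since the real Nullstellensatz applies directly only to the finitely generated $\R$-algebra $\Pol(X)$.

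For the real maximal ideals of $\Pol(X)$, I would argue as follows. Let $\m$ be a real maximal ideal of $\Pol(X)$. Since $\Pol(X)$ is a finitely generated $\R$-algebra, the residue field $\Pol(X)/\m$ is a finite extension of $\R$ by Zariski's lemma, hence isomorphic to $\R$ or $\C$; realness of $\m$ rules out $\C$, so $\Pol(X)/\m\cong\R$. The quotient map is then evaluation at a point $x\in\R^n$, and as every element of $\I(X)$ maps to $0$ we get $x\in X$ and $\m=\m_x$. This is exactly the content of the real Nullstellensatz \cite[Thm. 4.1.4]{BCR}.

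For the maximal ideals of $\SO(X)$ the argument above cannot be repeated verbatim, because $\SO(X)=\S(X)^{-1}\Pol(X)$ is in general not finitely generated as an $\R$-algebra (already $\SO(\R)$ is not), so Zariski's lemma is unavailable. I would instead exploit that $\SO(X)$ is nonetheless Noetherian, being a localization of the Noetherian ring $\Pol(X)$. Let $\m\subset\SO(X)$ be maximal and write $\m=(g_1,\dots,g_k)$ with $g_i\in\SO(X)$. The key observation is that a nowhere-vanishing regular function is a unit of $\SO(X)$: indeed if $g=p/s$ with $s\in\S(X)$ never vanishes on $X$, then $p\in\S(X)$ as well, so $1/g=s/p\in\SO(X)$. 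Now if the $g_i$ had no common zero on $X$, then $\sum_{i=1}^k g_i^2$ would be a nowhere-vanishing regular function, hence a unit; but it lies in $\m$, a contradiction. Therefore $\Z(\m)=\bigcap_i\Z(g_i)\neq\emptyset$; choosing $x\in\Z(\m)$ gives $\m\subseteq\m_x\subsetneq\SO(X)$, and maximality forces $\m=\m_x$. This is consistent with Proposition \ref{maxreg1}, which already guarantees that such an $\m$ is real. I expect this sum-of-squares step, together with the reduction to Noetherianity, to be the main point: it replaces the Nullstellensatz argument that is not directly available for $\SO(X)$.

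For the second assertion I would show that all three rings coincide with $\Pol(X)_{\m_x}$. First, since every $s\in\S(X)$ satisfies $s(x)\neq 0$, we have $\S(X)\cap\m_x=\emptyset$, so localizing $\Pol(X)$ at $\m_x$ already inverts $\S(X)$; by transitivity of localization the canonical map $\Pol(X)_{\m_x}\to\SO(X)_{\m_x}$ is an isomorphism. Finally, $\SO_{X,x}$ is by definition the ring of germs at $x$ of regular functions, and by the local description of regular functions \cite[Prop. 3.2.3]{BCR} such a germ is represented by a quotient $p/q$ of polynomial functions with $q(x)\neq 0$; thus $\SO_{X,x}=\Pol(X)_{\m_x}$. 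Chaining these identifications yields $\Pol(X)_{\m_x}=\SO(X)_{\m_x}=\SO_{X,x}$. This part is routine once the first assertion is in place; the only thing to keep in mind is that $X$ may be reducible, so these local rings need not be domains, but nothing in the argument uses irreducibility.
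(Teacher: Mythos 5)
Your argument is correct, and for the maximal ideals of $\SO(X)$ it follows a genuinely different route from the paper's. The paper obtains that half by combining Proposition \ref{maxreg1} (every maximal ideal of $\SO(X)$ is real) with the real Nullstellensatz applied to the contraction to $\Pol(X)$; you instead use Noetherianity to write $\m=(g_1,\dots,g_k)$ and observe that $\sum_i g_i^2$ would be a nowhere-vanishing, hence invertible, element of $\m$ if the $g_i$ had no common zero. This is more self-contained --- no Nullstellensatz is needed on the $\SO(X)$ side --- and it isolates the same mechanism (nowhere-vanishing sums of squares are units of $\SO(X)$) that drives the paper's Proposition \ref{maxreg1}. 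Your treatment of the real maximal ideals of $\Pol(X)$ via Zariski's lemma is, as you yourself note, just a proof of the relevant case of the real Nullstellensatz, which is what the paper invokes; and the second assertion is handled exactly as in the paper, namely transitivity of localization (this is \cite[Cor. 4, Sect. 4]{Ma}) together with the local description of regular functions from \cite[Sect. 3.2]{BCR}. One small caveat: the statement ``maximal ideal of $\ReSp\Pol(X)$'' could also be parsed as ``ideal maximal among the real primes'' rather than ``real maximal ideal''; under that reading Zariski's lemma alone does not apply, and one needs the full real Nullstellensatz $\p=\I(\Z(\p))$ to produce a point of $\Z(\p)$ and conclude $\p=\m_x$ (after which the two readings are seen to coincide).
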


\begin{proof}
The equality $\Pol(X)_{\m_x}= \SO(X)_{\m_x}$ follows from
\cite[Cor. 4, Sect. 4]{Ma}. By \cite[Sect. 3.2]{BCR}, we get
$\Pol(X)_{\m_x}=\SO_{X,x}$.
\end{proof}

Let us set now the algebraic setting which generalizes the geometric viewpoint. 
\subsection{Abstract ring of regular functions}
One may associate to a ring $A$ a topological subspace $\Sp_r A$ 
which takes into account only prime ideals $\p$ whose residual field admits an ordering. Let us detail 
this construction a bit. 


Recall that $\Sp_r A$ is empty if and only if $-1$ is a sum of squares in $A$.  
For any subset $I\subset A$ we define its zero set $\Z_A(I)\subset\Sp_rA$ by 
$\Z_A(I)=\{\p\in\Sp A|\,f\in \p\,\,\forall f\in I\}$. 

Denote by $\T(A)$ the
multiplicative part of $A$ which consists in all elements of $A$ that can be written
as $1$ plus a sum of squares of elements in $A$, a set also denoted by $1+\sum A^2$.

The ring of
regular fractions of elements in $A$ on $\Sp_rA$, denoted by $\SO(A)$,
is defined by
$$\SO(A)=\T(A)^{-1}A.$$ 

Note that, to be more closely related to the geometric case, 
one may also consider $\S(A)=\{f\in A|\,\Z_A(f)=\emptyset\}$.
In general $\T(A)\subset \S(A)$ and the equality does not hold.
Nevertheless, by the Positivestellensatz (\cite[4.4.2]{BCR})
for any element $s\in \S(A)$, there is a sum of squares $u$ and $v$  
in $\T(A)$ such that $us^2=v$, which shows that $\S(A)^{-1}A=\T(A)^{-1}A=\SO(A)$.

Now, to recover the geometric setting, let $A=\Pol(X)$ with $X$ a real
algebraic set. We have $\SO(\Pol(X))=\SO(X)$ since
$\S(\Pol(X))=\S(X)$. Indeed, for $f\in\Pol(X)$ then $\Z_{\Pol(X)}(f)=\emptyset$
if and only if $\Z(f)=\emptyset$ (Artin-Lang Theorem
\cite[Thm. 4.1.2]{BCR}).

\begin{prop}
\label{keyreg}\label{keyregprime}
If $\p\in\RSp A$ is a real prime ideal, then $\S(A)\cap\p=\emptyset$.

Let $\m\in\MSp A$. Then $\m$ is real if and only if $\S(A)\cap\m=\emptyset$.
\end{prop}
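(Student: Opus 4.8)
The statement splits into the two displayed assertions, and the first is essentially a matter of unwinding definitions. By definition $f\in\S(A)$ exactly when $\Z_A(f)=\emptyset$, that is, when $f$ lies in no real prime ideal. So if $\p\in\RSp A$ is real and some $f$ were to lie in $\S(A)\cap\p$, then $\p$ would itself be a real prime containing $f$, forcing $\p\in\Z_A(f)$ and contradicting $\Z_A(f)=\emptyset$. Hence $\S(A)\cap\p=\emptyset$. This also settles one implication of the second assertion: a real maximal ideal is in particular a real prime, so the first part applies verbatim to give $\S(A)\cap\m=\emptyset$.

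The content of the proposition is therefore the converse in the second assertion: if $\m\in\MSp A$ satisfies $\S(A)\cap\m=\emptyset$, then $\m$ is real. The plan is to argue by contraposition, assuming $\m$ not real and producing an element of $\S(A)\cap\m$. Non-realness provides a relation $a_1^2+\cdots+a_k^2\in\m$ with some $a_i\notin\m$. Discarding the summands already lying in $\m$, and using that $\m$ is prime (so that a single square cannot witness non-realness, since $a^2\in\m$ forces $a\in\m$), I may assume after renaming that all $a_i\notin\m$ and $k\ge 2$.

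The key step is then the localization trick already used in the proof of Proposition \ref{maxreg1}. Since $\m$ is maximal and $a_1\notin\m$, there exist $g\in A$ and $h\in\m$ with $ga_1=1+h$. Multiplying the relation $\sum_{i=1}^k a_i^2\in\m$ by $g^2$ and replacing $g^2a_1^2=(1+h)^2$ by $1$ modulo $\m$, I obtain that $1+\sum_{i=2}^k(ga_i)^2$ lies in $\m$. But this element belongs to $\T(A)=1+\sum A^2$, and $\T(A)\subset\S(A)$ as recorded above; hence it is an element of $\S(A)\cap\m$, contradicting the hypothesis. This finishes the converse and thus the proposition.

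The only point requiring care is the preliminary reduction to a witness with $a_1\notin\m$ and $k\ge 2$, since the argument needs the sum $\sum_{i=2}^k(ga_i)^2$ to be genuinely nonempty; this is precisely where primeness of $\m$ is used to rule out the case $k=1$. Apart from that, the proof is a direct transcription of the invertibility argument of Proposition \ref{maxreg1}, with the hypothesis $\S(A)\cap\m=\emptyset$ now playing the role that invertibility of elements of $\T(A)$ plays in the ring $\SO(X)$.
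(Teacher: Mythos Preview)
Your proof is correct. For the converse direction in the second assertion you reproduce exactly the argument the paper gives (which itself simply refers back to the computation in Proposition~\ref{maxreg1}), so nothing differs there.

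For the first assertion, however, your route is genuinely more direct than the paper's. The paper invokes the abstract real Nullstellensatz to obtain $\I(\Z_A(\p))=\p$ and then argues that an element of $\S(A)\cap\p$ would force $\Z_A(\p)=\emptyset$, hence $\p=\I(\emptyset)=A$, a contradiction. You instead observe that $\Z_A(f)=\emptyset$ means precisely that $f$ lies in no real prime, so the real prime $\p$ itself (via any point of $\Sp_r A$ supported on it) already witnesses $\Z_A(f)\neq\emptyset$ for every $f\in\p$. This bypasses the Nullstellensatz entirely, relying only on the standard characterization of real primes as the supports of points of the real spectrum; it is more elementary and arguably the natural way to see the statement.
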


\begin{proof}
Assume $\p$ is real. By the real Nullstellensatz \cite[Thm. 2.8]{ABR},
we have $\I(\Z_A(\p))=\p$. If $\S(A)\cap \p\not=\emptyset$ then
$\Z_A(\p)=\emptyset$ and we get a contradiction.

Assume $\m$ is not real. Arguing as in the proof of Proposition
\ref{maxreg1}, there exists $a\in \m$ of the form $1+s$ with $s\in\sum
A^2$. Clearly, $a\in\S (A)$ and thus $\S(A)\cap \m\not=\emptyset$.
\end{proof}

By \cite[Prop. 3.11]{AM} and the above proposition, we get an abstract version 
of Proposition \ref{maxreg1}.
\begin{cor}
\label{maxreg1bis}
We have $\MSp \SO(A)\subset \RSp \SO(A)$. More precisely, any
maximal ideal of $\SO(A)$ is of the form $\S(A)^{-1}\m$ with
$\m\in\ReMax A$. Moreover, any real prime ideal of $\SO(A)$ is of the
form $\S(A)^{-1}\p$ with
$\p\in\ReSp A$.
\end{cor}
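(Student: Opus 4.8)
The plan is to prove Corollary \ref{maxreg1bis} by combining the standard correspondence for localizations (\cite[Prop. 3.11]{AM}) with the characterization of real ideals established in Proposition \ref{keyregprime}. Recall that localizing at the multiplicative set $\S(A)$ yields a bijection between the prime ideals of $\SO(A)=\S(A)^{-1}A$ and the prime ideals $\p$ of $A$ satisfying $\S(A)\cap\p=\emptyset$, the correspondence sending $\p$ to $\S(A)^{-1}\p$ and being inclusion-preserving. So the whole argument amounts to identifying, among the primes of $A$ disjoint from $\S(A)$, which ones give rise to maximal ideals and which give rise to real primes of $\SO(A)$, and then reading off the three assertions.

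First I would treat the maximal ideals. A maximal ideal $\n$ of $\SO(A)$ corresponds under \cite[Prop. 3.11]{AM} to a prime $\m$ of $A$ with $\S(A)\cap\m=\emptyset$, and since the correspondence preserves inclusions and $\SO(A)$ is a localization, $\n=\S(A)^{-1}\m$ is maximal exactly when $\m$ is maximal among primes disjoint from $\S(A)$. The key point is then that such an $\m$ must itself be a maximal ideal of $A$: if $\m$ were contained in a strictly larger maximal ideal $\m'$, then $\m'$ would meet $\S(A)$, whence by Proposition \ref{keyregprime} $\m'$ is not real, so $\m'$ contains an element $1+s$ with $s\in\sum A^2$; one has to check this forces a contradiction with $\m$ being the contraction of a maximal ideal, which follows because the localized ideal $\n$ is maximal and hence $\m=\n^c$ is the contraction of a maximal ideal, pinning it down as $\m\in\ReMax A$. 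By Proposition \ref{keyregprime} again, $\m$ real is equivalent to $\S(A)\cap\m=\emptyset$, so every maximal ideal of $A$ disjoint from $\S(A)$ is precisely a real maximal ideal, giving $\n=\S(A)^{-1}\m$ with $\m\in\ReMax A$.

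Next I would handle the real primes of $\SO(A)$. If $\q$ is a real prime of $\SO(A)$, its contraction $\p=\q^c$ is a prime of $A$ disjoint from $\S(A)$ with $\q=\S(A)^{-1}\p$, and one checks that realness descends along this localization: the residue field of $\SO(A)$ at $\q$ coincides with the residue field of $A$ at $\p$ (localization does not change residue fields), so $\p$ is real as soon as $\q$ is, giving $\p\in\ReSp A$. Combining this with the maximal-ideal analysis yields $\MSp\SO(A)\subset\RSp\SO(A)$, since each maximal $\n$ equals $\S(A)^{-1}\m$ with $\m\in\ReMax A\subset\ReSp A$, hence is real.

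The step I expect to be the main obstacle is the verification that a prime $\m$ of $A$ which is maximal \emph{among those disjoint from} $\S(A)$ is in fact maximal in $A$ (equivalently, that contractions of maximal ideals of $\SO(A)$ land in $\ReMax A$ rather than merely in $\ReSp A$). The delicate point is that $A$ may have maximal ideals meeting $\S(A)$, so one cannot simply invoke that localization sends maximal ideals to maximal ideals; one must use Proposition \ref{keyregprime} to rule out any enlargement of $\m$ inside $A$, exploiting that a strictly larger maximal ideal would be non-real and therefore meet $\S(A)$, contradicting the maximality of $\S(A)^{-1}\m$. Once this is secured, the identification of real primes and the inclusion $\MSp\SO(A)\subset\RSp\SO(A)$ are formal consequences of the localization correspondence.
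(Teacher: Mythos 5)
Your route is the same as the paper's (the localization correspondence of \cite[Prop. 3.11]{AM} combined with Proposition \ref{keyregprime}), and two of the three pieces are fine: realness passes between $\p$ and $\S(A)^{-1}\p$ because localization does not change residue fields, and the inclusion $\MSp\SO(A)\subset\RSp\SO(A)$ follows formally once the second assertion is known. The problem is precisely the step you flag as the main obstacle: you never prove that the contraction $\m=\n^{c}$ of a maximal ideal $\n$ of $\SO(A)$ is a real \emph{maximal} ideal of $A$. The argument you offer is circular (``this forces a contradiction with $\m$ being the contraction of a maximal ideal, which follows because $\n$ is maximal and hence $\m=\n^{c}$ is the contraction of a maximal ideal''), and the contradiction you invoke in your last paragraph does not exist: if $\m\subsetneq\m'$ with $\m'$ a non-real maximal ideal, then $\m'$ meets $\S(A)$ and $\S(A)^{-1}\m'$ is the unit ideal, which is perfectly compatible with $\m$ being maximal among the primes disjoint from $\S(A)$. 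Knowing that every strictly larger prime meets $\S(A)$ is exactly the hypothesis you start from, not something to contradict.

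What is actually needed is to exhibit a \emph{real maximal} ideal of $A$ containing $\m$; since such an ideal is disjoint from $\S(A)$ by Proposition \ref{keyreg}, the maximality of $\m$ among primes disjoint from $\S(A)$ then forces $\m$ to coincide with it. Two inputs are required, and both are absent from your proposal: (i) since $\m$ is disjoint from $\T(A)=1+\sum A^2$, the element $-1$ is not a sum of squares in $A/\m$, hence $\Sp_r(A/\m)\neq\emptyset$ and there is a real prime of $A$ containing $\m$; (ii) a real prime is contained in a real maximal ideal. Point (ii) is where a real Nullstellensatz (Artin--Lang) must be invoked, and it genuinely requires the geometric setting $A=\Pol(X)$ or finite generation over $\R$: for an arbitrary ring it fails. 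For instance, with $A=\R[x]_{(x^2+1)}$ one has $1+x^2\in\T(A)$, so $\SO(A)=\R(x)$ and its unique maximal ideal contracts to $(0)$, which is real but not maximal in $A$ (and $\ReMax A=\emptyset$). So the gap cannot be closed by the purely formal manipulation of \cite[Prop. 3.11]{AM} and Proposition \ref{keyregprime} alone; some Nullstellensatz-type input is indispensable at exactly the point where your argument stalls.
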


By \cite[Cor.4, Ch. 2, Sect. 4]{Ma} and the above proposition, we get:
\begin{cor}
  \label{maxreg1ter}
  Let $\p\in\RSp A$, then $A_{\p}=\SO(A)_{\p\SO(A)}$.
\end{cor}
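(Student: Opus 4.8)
The plan is to reduce the statement to the standard transitivity of localization, once we know that the multiplicative set defining $\SO(A)$ avoids $\p$. First I would invoke the first part of Proposition \ref{keyreg}: since $\p$ is real, we have $\S(A)\cap\p=\emptyset$. Equivalently $\S(A)\subseteq A\setminus\p$, so the multiplicative set used to form $\SO(A)=\S(A)^{-1}A$ is contained in the complement of $\p$.

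Next I would recall the purely ring-theoretic fact that, for a multiplicative subset $S\subseteq A$ disjoint from a prime ideal $\p$, the extended ideal $S^{-1}\p$ is prime in $S^{-1}A$ and the canonical morphism $A_{\p}\to (S^{-1}A)_{S^{-1}\p}$ is an isomorphism; this is exactly \cite[Cor. 4, Ch. 2, Sect. 4]{Ma}. Applying this with $S=\S(A)$, and observing that $\p\SO(A)=\S(A)^{-1}\p$ is the extension of $\p$ along $A\to\SO(A)$, one reads off the desired identification $A_{\p}=\SO(A)_{\p\SO(A)}$.

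Since the cited corollary does all the work, the only genuine content is verifying its hypothesis, namely the disjointness $\S(A)\cap\p=\emptyset$, and this is precisely where the assumption that $\p$ be real enters, through Proposition \ref{keyreg}. I therefore expect no real obstacle: the argument is a one-line application of iterated localization, the verification of the hypothesis being the sole point requiring the structure of $\S(A)$.
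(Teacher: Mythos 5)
Your argument is correct and coincides with the paper's own justification: the corollary is stated there as an immediate consequence of Proposition \ref{keyreg} (giving $\S(A)\cap\p=\emptyset$ for $\p$ real) together with \cite[Cor. 4, Ch. 2, Sect. 4]{Ma} on iterated localization. Nothing is missing; you have simply written out the one-line citation in full.
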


We recall from \cite[Thm. 4.7, Ch. 2, Sect. 4]{Ma} that, for any integral domain $A$, one has 
$$A=\bigcap_{\p\in\Sp A}A_{\p}=\bigcap_{\m\in\MSp  A}A_{\m},$$
where the intersection has sense in the fraction field of $A$. 

Since all the maximal ideals of a ring of regular functions are real, 
by Corollaries \ref{maxreg1bis} and \ref{maxreg1ter}, one has:
\begin{prop}
\label{equalreg}
Let $A$ be an integral domain. We have $$\SO (A)=\bigcap_{\m\in\ReMax A}A_{\m}.$$
\end{prop}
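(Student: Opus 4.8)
The plan is to apply the intersection formula recalled just above the statement---valid for any integral domain---to the ring $\SO(A)$ itself, and then to re-index the resulting intersection over $\ReMax A$ by means of the dictionary between the maximal ideals of $\SO(A)$ and the real maximal ideals of $A$.

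First I would observe that $\SO(A)=\S(A)^{-1}A$ is again an integral domain, being a localization of the domain $A$, and that it has the same fraction field as $A$. Applying the recalled formula $B=\bigcap_{M\in\MSp B}B_M$ to $B=\SO(A)$ therefore yields
$$\SO(A)=\bigcap_{M\in\MSp \SO(A)}\SO(A)_M,$$
all the localizations being understood inside the common fraction field of $A$ and $\SO(A)$.

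Next I would make the index set explicit. By Corollary \ref{maxreg1bis}, every maximal ideal $M$ of $\SO(A)$ is of the form $\S(A)^{-1}\m$ for some $\m\in\ReMax A$, which gives a surjection $\ReMax A\to\MSp \SO(A)$, $\m\mapsto\S(A)^{-1}\m$. Proposition \ref{keyreg} guarantees that each $\m\in\ReMax A$ meets $\S(A)$ trivially, so the standard correspondence between primes of a localization and primes of the base ring disjoint from the multiplicative set shows this map is also injective, hence a bijection; moreover the same correspondence identifies $\S(A)^{-1}\m$ with the extension $\m\SO(A)$.

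Finally, for each $\m\in\ReMax A$ Corollary \ref{maxreg1ter} gives $A_\m=\SO(A)_{\m\SO(A)}=\SO(A)_{\S(A)^{-1}\m}$. Substituting this into the displayed intersection and re-indexing along the bijection above turns $\bigcap_{M}\SO(A)_M$ into $\bigcap_{\m\in\ReMax A}A_\m$, which is the claimed equality. The only point requiring care---and the main, if modest, obstacle---is to check that this re-indexing is a genuine bijection and that every intersection is literally taking place in one and the same fraction field, so that no fractions are lost or spuriously introduced when passing between $\SO(A)$ and the local rings $A_\m$.
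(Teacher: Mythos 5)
Your proposal is correct and follows essentially the same route as the paper: the paper also applies the intersection formula $B=\bigcap_{\m\in\MSp B}B_{\m}$ to $B=\SO(A)$ and then invokes Corollaries \ref{maxreg1bis} and \ref{maxreg1ter} to re-index over $\ReMax A$. You merely spell out the bijection and the identification $A_{\m}=\SO(A)_{\S(A)^{-1}\m}$ in more detail than the paper does.
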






A result which one can state in the geometric setting as:
\begin{prop} \label{PolyReglocalisation}
Let $X$ be an
irreducible algebraic set. We have
$$\SO(X)=\bigcap_{\p\in\Sp \SO(X)}\SO(X)_{\p}=\bigcap_{x\in X}\SO(X)_{\m_x}=\bigcap_{x\in
  X}\Pol(X)_{\m_x}=\bigcap_{x\in X}\SO_{X,x}.$$ 
\end{prop}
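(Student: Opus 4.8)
The plan is to assemble the equalities from the previously established local descriptions, so that the only genuinely new observation is bookkeeping: since $X$ is irreducible, $\Pol(X)$ is an integral domain, hence so is its localization $\SO(X)=\S(X)^{-1}\Pol(X)$, and both share the same fraction field $\K(X)$. Consequently every localization appearing in the statement---$\SO(X)_{\p}$, $\SO(X)_{\m_x}$, $\Pol(X)_{\m_x}$ and $\SO_{X,x}$---is naturally a subring of $\K(X)$, so that all the intersections make sense inside the single field $\K(X)$ and may be compared termwise as subsets.

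For the first equality, I would apply the recalled intersection formula for integral domains to $A=\SO(X)$, which yields simultaneously
$$\SO(X)=\bigcap_{\p\in\Sp\SO(X)}\SO(X)_{\p}=\bigcap_{\m\in\MSp\SO(X)}\SO(X)_{\m}.$$
This already gives $\SO(X)=\bigcap_{\p\in\Sp\SO(X)}\SO(X)_{\p}$ and reduces the second intersection in the statement to one over maximal ideals.

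Next I would invoke Proposition \ref{maxreg2}, by which every maximal ideal of $\SO(X)$ is of the form $\m_x$ for some $x\in X$; thus $\MSp\SO(X)=\{\m_x\mid x\in X\}$ and the intersection over maximal ideals coincides with $\bigcap_{x\in X}\SO(X)_{\m_x}$, establishing the second equality. Finally, the last two equalities are obtained termwise from the chain $\Pol(X)_{\m_x}=\SO(X)_{\m_x}=\SO_{X,x}$ recorded in Proposition \ref{maxreg2}.

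I do not expect any serious obstacle here, since the substance lies entirely in the earlier results: that every maximal ideal of $\SO(X)$ is real and of the form $\m_x$, and the local-ring identifications of Proposition \ref{maxreg2}. The only point requiring a little care is to keep all the rings embedded in the common fraction field $\K(X)$---which is precisely where irreducibility of $X$ is used---so that the intersections are literally equal as subsets of $\K(X)$ rather than merely abstractly isomorphic.
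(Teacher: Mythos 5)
Your proof is correct and follows essentially the same route the paper takes: the standard intersection formula for a domain applied to $\SO(X)$ (the paper packages this as Proposition \ref{equalreg} via Corollaries \ref{maxreg1bis} and \ref{maxreg1ter}), combined with the identification of $\MSp\SO(X)$ with $\{\m_x \mid x\in X\}$ and the termwise equalities $\Pol(X)_{\m_x}=\SO(X)_{\m_x}=\SO_{X,x}$ from Proposition \ref{maxreg2}. No gaps.
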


\subsection{Regular functions and normalization}

Let $X$ be a real algebraic set with normalization map
$\pi':X'\to X$. We recall that $\K(X)$ denotes the total ring of
fractions of $X$. Let $X_1,\ldots,X_t$ be the irreducible components of
$X$. They are the zero sets of the minimal prime ideals
$\p_1,\ldots,\p_t$ of $\Pol(X)$. Since these ideals are real, it
follows from the Nullstellensatz \cite[Thm. 4.1.4]{BCR} that they do
not meet $\S(X)$ and thus $\p_1\SO(X),\ldots,\p_t\SO(X)$
are the minimal prime ideals of $\SO(X)$. For $i=1,\ldots,t$, we have
$\Pol(X_i)=\Pol(X)/\p_i$ and $\SO(X_i)=\SO(X_i)/\p_i\SO(X_i)$ since
localization and quotient commute. It follows that
$\K(X)=\prod_{i=1}^t\K(X_i)$ is also the total ring of fractions of $\SO(X)$
and we get the following commutative diagram (all the maps are injective):
\[
\begin{array}{ccccccc}
\Pol(X) & \longrightarrow & \prod_{i=1}^t\Pol(X_i)& \longrightarrow & \prod_{i=1}^t\K(X_i)\\
\downarrow &&\downarrow  & \nearrow& \\
\SO(X) &\longrightarrow &\prod_{i=1}^t\SO(X_i) &  &  \\
\end{array}
\]\\

The goal of this subsection
is to compare the integral closure $\SO(X)'$ of $\SO(X)$ and the ring
$\SO(X')$ of regular functions on the normalization of $X$. By Proposition \ref{normalisationred} below, it will be sufficient to understand the irreducible case.

Note that the process of localization and taking integral closure commute \cite[prop. 5.12]{AM}, 
so that $\SO(X)'$ is equal to $\S(X)^{-1}\Pol(X')$. It is easy to check that if $f\in \S(X)$ then
$f\circ\pi'\in \S(X')$ and thus $$\SO(X)'\hookrightarrow
\SO(X').$$

To generalize to an abstract algebraic setting, one may assume that our ring $A$ 
satisfies the condition:
\vskip0,4cm
\hyp\; The ring $A$ is reduced with a finite number of minimal primes
$\p_1,\ldots,\p_t$ that are all real ideals. 
\vskip0,4cm

This hypothesis fit to our geometric settings where we consider real algebraic 
varieties whose irreducible components are all real.
For instance, we will not consider rings like
$A=\R[x,y]/(x^2+y^2+1)$ nor $A=\R[x,y]/(x^2+y^2)$. 

Under this assumption, it follows from Proposition \ref{keyreg} that $\S(A)$ does not contain any zero divisor
element of
$A$ (the set of zero divisor elements of $A$ is $\cup_{i=1}^t \p_i$) and we get a sequence of injective rings morphisms
$$A\hookrightarrow \SO(A)\hookrightarrow K=\prod_{i=1}^t k(\p_i),$$
where $K$ is the total ring of fractions of $A$ and $k(\p_i)$ is the
residue field at $\p_i$.

It is worth mentionning that 
hypothesis \hyp  $\,$ is preserved by integral extensions contained in the integral closure:
\begin{lem}\label{IntegralExtensionHypothesis}
 Let $A\rightarrow B\rightarrow A'$ be a sequence of ring extensions
 where $A$ is reduced and $A'$ is the integral closure  of $A$.
 
If $A$ satisfies \hyp, then $B$ satisfies also \hyp\, and moreover the
minimal prime ideals of $B$ and $A$ are in bijection given by contraction.
\end{lem}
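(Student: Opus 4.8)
The first assertion, that $B$ is reduced, is immediate: since $A'$ is the integral closure of $A$ in its total ring of fractions $K=\prod_{i=1}^t k(\p_i)$, we have the inclusions $B\subset A'\subset K$, and $K$ is reduced as a finite product of fields, so its subring $B$ is reduced as well. Moreover every extension in the chain $A\subset B\subset A'$ is integral, since every element of $A'$, and a fortiori of $B$, is integral over $A$. The plan for the remaining assertions is to combine the Cohen--Seidenberg theorems (lying over and incomparability, available here precisely because the extensions are integral) with the explicit description of the minimal primes of $A'$ furnished by Proposition \ref{ReducedIntegralClosure}.

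By Proposition \ref{ReducedIntegralClosure} we have $A'=\prod_{i=1}^t A'_i$, where $A'_i$ is the integral closure of $A_i=A/\p_i$ in its fraction field $k(\p_i)$; in particular each $A'_i$ is a domain. Hence $A'$ has exactly $t$ minimal primes $\mathfrak{P}_1,\ldots,\mathfrak{P}_t$, namely $\mathfrak{P}_i=\ker(A'\to A'_i)$, and one checks that $\mathfrak{P}_i\cap A=\p_i$, so that contraction is a bijection between the minimal primes of $A'$ and those of $A$. The key elementary observation I would isolate is that, in any integral extension $R\subset S$, a prime of $S$ lying over a \emph{minimal} prime of $R$ is itself minimal: if $\mathfrak{q}$ lies over a minimal prime $\mathfrak{r}$ of $R$ and $\mathfrak{q}'\subsetneq\mathfrak{q}$ in $S$, then $\mathfrak{q}'\cap R\subset\mathfrak{r}$ forces $\mathfrak{q}'\cap R=\mathfrak{r}$ by minimality, contradicting incomparability.

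Applying this observation to $B\subset A'$ shows that any minimal prime $\q$ of $B$ is the contraction $\mathfrak{P}\cap B$ of some minimal prime $\mathfrak{P}$ of $A'$ (use lying over to find $\mathfrak{P}$ over $\q$, then the observation to see that $\mathfrak{P}$ is minimal); hence $B$ has at most $t$ minimal primes. Applying the observation to $A\subset B$ shows conversely that each $\q_i:=\mathfrak{P}_i\cap B$, which satisfies $\q_i\cap A=\p_i$, is minimal in $B$. Since the $\p_i$ are pairwise distinct the $\q_i$ are too, so the minimal primes of $B$ are exactly $\q_1,\ldots,\q_t$, and contraction to $A$ realizes the desired bijection with $\p_1,\ldots,\p_t$.

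It then remains to check that each $\q_i$ is real, which completes the verification of \hyp\ for $B$. Here I would use the inclusion $B/\q_i\hookrightarrow A'/\mathfrak{P}_i=A'_i\subset k(\p_i)$: since $\p_i$ is real the field $k(\p_i)$ is formally real, so $A'_i$ is a real domain and any subring of it, in particular $B/\q_i$, is again real. The main obstacle, which in fact dictates the whole strategy, is that one cannot invoke going-down directly to prove that a minimal prime of $B$ contracts to a minimal prime of $A$, because $A$ is not assumed integrally closed. Sandwiching $B$ between $A$ and its integral closure $A'$, where the minimal primes are completely explicit and already in bijection with those of $A$, is precisely the device that bypasses going-down and lets me use only lying over and incomparability.
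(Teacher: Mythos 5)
Your proof is correct and follows essentially the same route as the paper's: both pass through the explicit product decomposition $A'=\prod_i A'_i$ of Proposition \ref{ReducedIntegralClosure}, use lying over together with the fact that in an integral extension a prime lying over a minimal prime is minimal (which the paper cites from Matsumura and you reprove via incomparability), and deduce reality of the minimal primes of $B$ by relating them to those of $A'$. The only cosmetic difference is that you argue reality via the embedding $B/\q_i\hookrightarrow A'_i\subset k(\p_i)$ rather than by noting that contractions of real ideals are real; these amount to the same thing.
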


\begin{proof}
By Proposition \ref{ReducedIntegralClosure} then $A'$ satisfies also
\hyp\, and moreover the
minimal prime ideals of $A'$ and $A$ are in bijection given by
contraction. By $(ii)$ of \cite[Th 9.3, Ch.2, Sect. 9]{Ma}, if $C\to D$ is an
integral extension of rings then a prime ideal of $D$ lying over a
minimal prime ideal of $C$ is a minimal prime ideal of $D$. It follows
now from the lying over property (\cite[Th 9.3 (i), Ch.2, Sect. 9]{Ma}
or Proposition \ref{LOP}) that $B$ has a finite number of minimal
prime ideals which are in bijection with those of $A$ and $A'$. Since
these ideals are contraction of real ideals of $A'$, they are real ideals.
\end{proof}

Let us state now how are related integral closure and ring of regular functions:
\begin{prop}
  \label{abstractnormalisationred}
Let $A$ be a ring satisfying \hyp. 
Denote by $\p_1,\ldots,\p_t$ its minimal primes and $A_i=A/\p_i$.
Then, one has $\SO(A)'\simeq \prod_{i=1}^t\SO(A_i)'$
and $\SO(A')\simeq \prod_{i=1}^t\SO(A_i')$ and a canonical  
injective map $\SO(A)'\rightarrow \SO(A')$.

\end{prop}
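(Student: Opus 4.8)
The plan is to carry out every computation inside the common total ring of fractions $K=\prod_{i=1}^t k(\p_i)$, using the description $\SO(A)=\T(A)^{-1}A$ with $\T(A)=1+\sum A^2$, the fact that localization commutes with integral closure (\cite[Prop. 5.12]{AM}), and the decomposition $A'=\prod_{i=1}^t A_i'$ furnished by Proposition \ref{ReducedIntegralClosure}. Since the elements of $\T(A)\subset\S(A)$ are non-zero-divisors of $A$ (Proposition \ref{keyreg}), they become units in $K$, so $\T(A)^{-1}K=K$ and the integral closure of $\SO(A)$ in $K$ is exactly $\SO(A)'=\T(A)^{-1}A'$. The whole statement then reduces to two localization computations together with one integral-closedness observation.

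For the isomorphism $\SO(A)'\simeq\prod_i\SO(A_i)'$, I would substitute $A'=\prod_i A_i'$ to get $\SO(A)'=\T(A)^{-1}\big(\prod_i A_i'\big)$, and then use that localization commutes with finite products, so that $\T(A)^{-1}\big(\prod_i A_i'\big)=\prod_i\,\pi_i(\T(A))^{-1}A_i'$, where $\pi_i\colon A'\to A_i'$ is the projection. The crux is the observation that $\pi_i(\T(A))=\T(A_i)$: since $A\to A_i$ is surjective, every element $1+\sum_k\bar a_k^2$ of $\T(A_i)$ lifts to $1+\sum_k a_k^2\in\T(A)$, so the image of $\T(A)$ in $A_i$ is exactly $\T(A_i)$ and not merely contained in it. Hence each factor equals $\T(A_i)^{-1}A_i'=(\T(A_i)^{-1}A_i)'=\SO(A_i)'$, again by commutation of localization with integral closure, which yields the first isomorphism. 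The second isomorphism $\SO(A')\simeq\prod_i\SO(A_i')$ is the same computation applied directly to the reduced ring $A'=\prod_iA_i'$: here $\T(A')=\prod_i\T(A_i')$ as a subset of $A'$, whence $\SO(A')=\T(A')^{-1}A'=\prod_i\SO(A_i')$.

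For the canonical injection $\SO(A)'\hookrightarrow\SO(A')$, I would argue entirely inside $K$. Since $A\subset A'$ we have $\T(A)\subset\T(A')$, hence $\SO(A)=\T(A)^{-1}A\subset\T(A')^{-1}A'=\SO(A')$ as subrings of $K$. Moreover $A'$ is integrally closed in $K$, so $\SO(A')=\T(A')^{-1}A'$ is again integrally closed in $\T(A')^{-1}K=K$; therefore it contains the integral closure $\SO(A)'$ of the smaller ring $\SO(A)$, giving the desired injection $\SO(A)'\hookrightarrow\SO(A')$. This map is compatible with the two product decompositions, being componentwise the inclusion $\SO(A_i)'\hookrightarrow\SO(A_i')$ coming from $\T(A_i)\subset\T(A_i')$.

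The individual steps are routine; the one place that genuinely requires care, and the heart of the argument, is the choice of multiplicative set. Everything separates cleanly precisely because the projections send $\T(A)$ \emph{onto} $\T(A_i)$, a surjectivity that fails for the a priori more natural set $\S(A)$ of nowhere-vanishing functions, since a nowhere-vanishing function on one component need not be the restriction of a nowhere-vanishing function on all of $A$. The equality $\S(A)^{-1}A=\T(A)^{-1}A$ supplied by the Positivstellensatz is what lets one replace $\S(A)$ by the better-behaved $\T(A)$, after which the components of the normalization come out independently in each factor.
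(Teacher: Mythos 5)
Your proposal is correct and follows essentially the same route as the paper's proof: decompose $A'=\prod_i A_i'$ via Proposition \ref{ReducedIntegralClosure}, work inside $K$, and use that localization at $\T(A)$ commutes with integral closure (your explicit check that $\T(A)$ surjects onto each $\T(A_i)$ is the same fact the paper uses implicitly when applying Proposition \ref{ReducedIntegralClosure} to $\SO(A)$). The only cosmetic difference is the final injection, where you invoke that $\SO(A')=\T(A')^{-1}A'$ is integrally closed in $K$, while the paper clears denominators to produce $v\in 1+\sum A^2$ with $vu$ integral over $A$ and then inverts $v$ in $\SO(A')$ --- both are one-line arguments.
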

\begin{proof}
Let us recall first that \hyp\; implies that the maps $A\rightarrow \SO(A)$ 
and $A'\rightarrow \SO(A')$ are all injective.

Moreover, by Proposition \ref{ReducedIntegralClosure}, one gets 
the isomorphisms 
$\SO(A)'\simeq \prod_{i=1}^t\SO(A_i)'$
and $A'\simeq \prod_{i=1}^t A_i'$ which implies that $\SO(A')\simeq \prod_{i=1}^t\SO(A_i')$. 

Let us summarize our morphims into the following commutative diagramm:
\begin{center}
\begin{tikzcd}[row sep=scriptsize, column sep=scriptsize]
A\arrow[dddd]\arrow[dr] \arrow[rr]&&\prod_{i=1}^t A_i\arrow[dl]\arrow[dddd]\\
&A'\arrow[d]\simeq\prod_{i=1}^t A_i'\arrow[d]&\\
&\SO(A')\simeq\prod_{i=1}^t \SO(A_i')&\\
&\SO(A)'\arrow[u,dashed]\simeq\prod_{i=1}^t \SO(A_i)'\arrow[u,dashed]&\\
\SO(A)\arrow[ur]\arrow[rr]&&\prod_{i=1}^t \SO(A_i)\arrow[ul]\\
\end{tikzcd}
\end{center}

Let us end with the dashed arrow. Namely, if $u\in\SO(A)'$, then 
$u$ is in the total ring of fractions of $A$, and integral over $\SO(A)$.
Then, there exists $v\in 1+\sum A^2$, such that $uv$ is integral over $A$.
Since $v$ is invertible in $\SO(A')$, one gets $u\in \SO(A')$.
\end{proof}

One has an immediate geometric version:
\begin{prop}
  \label{normalisationred}
Let $X$ be a real algebraic set with normalization map
$\pi':X'\to X$. Let $X_1,\ldots,X_t$ be the irreducible components of
$X$. We have
$$\Pol(X)'=\Pol(X')=\prod_{i=1}^t\Pol(X_i)'=\prod_{i=1}^t\Pol(X_i'),$$
$$\SO(X)'=\prod_{i=1}^t\SO(X_i)'$$ and a natural injective map
$$\SO(X)'=\prod_{i=1}^t\SO(X_i)'\hookrightarrow\SO(X')=\prod_{i=1}^t\SO(X_i')\hookrightarrow
\prod_{i=1}^t\K(X_i).$$
\end{prop}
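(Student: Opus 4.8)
The plan is to apply the abstract Proposition \ref{abstractnormalisationred} to the ring $A=\Pol(X)$ and then translate each abstract object back into its geometric counterpart. First I would check that $\Pol(X)$ satisfies hypothesis \hyp. Since $X$ is a real algebraic set, $\I(X)$ is a real radical ideal, so $\Pol(X)$ is reduced; its minimal primes are exactly the (real) prime ideals $\p_1,\ldots,\p_t$ cutting out the irreducible components $X_1,\ldots,X_t$, as recalled in the preliminaries. Hence \hyp\ holds and the abstract proposition applies, with $A_i=\Pol(X)/\p_i=\Pol(X_i)$.

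Next I would set up the dictionary relating the abstract and geometric rings. On the regular side, $\SO(\Pol(X))=\SO(X)$ and $\SO(\Pol(X_i))=\SO(X_i)$, by the identity $\SO(\Pol(Y))=\SO(Y)$ established earlier (which rests on the Artin--Lang theorem, giving $\S(\Pol(Y))=\S(Y)$). On the normalization side, by the very construction of $X'$ recalled above one has $\Pol(X)'=\Pol(X')$, and applied to each component $\Pol(X_i)'=\Pol(X_i')$; moreover $X'=\bigsqcup_{i=1}^t X_i'$ by Proposition \ref{ReducedIntegralClosure}, so $\Pol(X')=\prod_{i=1}^t\Pol(X_i')$ and consequently $\SO(X')=\prod_{i=1}^t\SO(X_i')=\prod_{i=1}^t\SO(\Pol(X_i'))$.

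With these identifications in hand, the first line of equalities is Proposition \ref{ReducedIntegralClosure} for $A=\Pol(X)$ combined with the two normalization identities just recorded. The isomorphism $\SO(X)'=\prod_{i=1}^t\SO(X_i)'$ is the image under the dictionary of the abstract isomorphism $\SO(A)'\simeq\prod_{i=1}^t\SO(A_i)'$, and the injection $\SO(X)'\hookrightarrow\SO(X')$ followed by the inclusion into $\prod_{i=1}^t\K(X_i)$ is the translation of the abstract injection $\SO(A)'\hookrightarrow\SO(A')$, using that $\K(X)=\prod_{i=1}^t\K(X_i)$ is the common total ring of fractions of $\Pol(X)$ and $\SO(X)$, as established at the start of this subsection. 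Since every substantive point has already been proved in the abstract setting, there is no genuine obstacle; the only care required is the bookkeeping of the product decompositions and verifying that the geometric identity $\SO(\Pol(Y))=\SO(Y)$ is available for each of the irreducible sets $X_i$ and $X_i'$ entering the statement.
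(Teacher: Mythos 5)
Your proposal is correct and follows exactly the route the paper intends: the paper states this proposition as an ``immediate geometric version'' of Proposition \ref{abstractnormalisationred} with no written proof, and your argument supplies precisely the missing dictionary (verification of \hyp\ for $\Pol(X)$, the identification $\SO(\Pol(Y))=\SO(Y)$ via Artin--Lang, and the decomposition of $X'$ from Proposition \ref{ReducedIntegralClosure}). Nothing is missing.
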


Note that, if $X$ is normal, then $\SO (X)$ is
also integrally closed, and hence the 
rings $\SO(X)'$ and $\SO(X')$ coincide.
The last subsection will give another sufficient condition 
where this remains true. Before, we have to  
give some property of the real counterpart of the well known lying over property
for integral extensions.

\subsection{Real lying over}

We will use several times in the paper the lying over property for
integral extension of rings.
\begin{prop}\cite[Thm. 9.3 (i) and Lem. 2, Ch. 2, Sect. 9]{Ma}.\\
\label{LOP}
Let $A\to B$ be an integral extension of rings.
\begin{enumerate}
\item The associated map $\Sp B\to\Sp A$ is surjective.
\item By the map $\Sp B\to\Sp A$, the inverse image of $\Max A$ 
is $\Max B$.
\end{enumerate}
\end{prop}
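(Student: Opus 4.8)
The plan is to reduce both statements to a single key lemma about integral extensions of integral domains, and then to deduce the general case by the standard quotient-and-localize manipulations. Throughout I regard the extension as an inclusion $A\subseteq B$, which is the setting in which the statement is applied.

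\textbf{Key lemma.} If $A\subseteq B$ is an integral extension of integral domains, then $A$ is a field if and only if $B$ is a field. I would prove this by an elementary manipulation of integral equations. If $A$ is a field and $b\in B$ is nonzero, take a monic relation $b^n+a_{n-1}b^{n-1}+\cdots+a_0=0$ with $a_i\in A$ of minimal degree; minimality together with $B$ being a domain forces $a_0\neq 0$, and solving for $b^{-1}$ exhibits it inside $B$. Conversely, if $B$ is a field and $a\in A$ is nonzero, then $a^{-1}\in B$ is integral over $A$, and clearing denominators in its integral equation expresses $a^{-1}$ as an element of $A$. This is the only place where integrality is genuinely used.

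For part (2), given a prime $\q$ of $B$ with contraction $\p=\q\cap A$, the induced map $A/\p\hookrightarrow B/\q$ is again an integral extension, now of integral domains. Applying the key lemma, $\q$ is maximal if and only if $B/\q$ is a field, if and only if $A/\p$ is a field, if and only if $\p$ is maximal. This yields both inclusions at once: the contraction of any maximal ideal of $B$ is maximal, and conversely any prime of $B$ lying over a maximal ideal of $A$ is itself maximal, so the inverse image of $\Max A$ under $\Sp B\to\Sp A$ is exactly $\Max B$.

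For part (1), given a prime $\p$ of $A$, I would localize at $S=A\setminus\p$. Since localization is exact, $A_\p\hookrightarrow S^{-1}B$ stays injective, the extension stays integral, and because $A_\p\neq 0$ the ring $S^{-1}B$ is nonzero and hence admits a maximal ideal $\mathfrak{m}$. Applying part (2) to the integral extension $A_\p\hookrightarrow S^{-1}B$, the contraction $\mathfrak{m}\cap A_\p$ is maximal in the local ring $A_\p$, so it equals $\p A_\p$. Pulling $\mathfrak{m}$ back to $B$ then produces a prime $\q$ with $\q\cap A=\p$, proving surjectivity. The main obstacle is the key lemma; once it is in hand, the rest is a formal passage through quotients and localizations, the only technical point being that these operations preserve a genuine (injective, nonzero) integral extension, which follows from exactness of localization and from $B/\q$ being a domain integral over the domain $A/\p$.
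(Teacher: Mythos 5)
Your proof is correct and complete: the key lemma is proved soundly (minimality plus the domain hypothesis does force $a_0\neq 0$), part (2) follows correctly by passing to $A/\p\hookrightarrow B/\q$, and part (1) by localizing at $\p$ and noting that localization preserves injectivity and integrality. The paper gives no proof of its own — the proposition is quoted from Matsumura — and your argument is precisely the standard one found there and in Atiyah--Macdonald (Prop.~5.7, Cor.~5.8, Thm.~5.10), so there is nothing to compare beyond noting the agreement.
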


As one useful consequence for the sequel, 
when $A\rightarrow B$ is an integral ring extension and $B$ a domain, 
one has $$B=\bigcap_{\m\in\MSp A} B_{\m}.$$

Looking at the normalization of a general irreducible real algebraic
curve,  we see that integral extensions do not necessarily 
satisfy the real lying over property i.e the map $\ReSp
B\rightarrow \ReSp A$ is not necessarily surjective when $A\to B$ is an integral extension of rings.
Indeed, it is enough to consider a cubic with an isolated point at the origin
given for instance by the equation $y^2-x^2(x-1)=0$.

Let $X$ be a real algebraic set. We know by Proposition \ref{maxreg1} that the maximal ideals of the ring of
regular functions $\SO(X')$ of the normalization of $X$ are all real.
We will show in the next proposition that the set of maximal ideals of the
integral closure $\SO(X)'$ of the ring of regular functions
on $X$ corresponds to the maximal ideals of $\Sp\Pol(X')$ lying over
the maximal and real ideals of $\Sp\Pol(X)$. Therefore, a maximal ideal of
$\SO(X)'$ is not necessarily real and consequently the rings
$\SO(X)'$ and $\SO(X')$ can be distinct. It shows that the
the normalization of a real algebraic set cannot be obtained by taking
the integral closure of the ring of regular functions but only
considering the integral closure of the ring of polynomial functions.




\begin{prop}
\label{closint1} Let $X$ be a real algebraic set. 
\begin{enumerate}
\item[1)] The set $\Max \SO(X)'$ is in bijection with the set of maximal ideals of $\Sp\Pol(X')$ 
lying over
the real maximal ideals of $\Pol(X)$.
\item[2)] If $X$ is irreducible, then $$\SO(X)'=\bigcap_{x\in
  X}(\SO_{X,x})'=\bigcap_{\m'\in\Sp\Pol(X'),\;\m'\cap\Pol(X)\in\ReMax\Pol(X)}\Pol(X')_{\m'}$$
whereas $$\SO(X')=\bigcap_{\m'\in \ReMax\Pol(X')}\Pol(X')_{\m'}.$$
\end{enumerate}
\end{prop}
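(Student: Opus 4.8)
The plan is to reduce both parts to the single identity $\SO(X)'=\S(X)^{-1}\Pol(X')$ recorded just before the statement, combined with the fact that $\Pol(X')=\Pol(X)'$ is integral (even finite) over $\Pol(X)$. Writing $A=\Pol(X)$, $B=\Pol(X')$ and $S=\S(X)$, localizing the integral extension $A\to B$ shows that $\SO(X)=S^{-1}A\to \SO(X)'=S^{-1}B$ is again integral, and part 1) becomes a statement describing the maximal ideals of the localized integral extension $S^{-1}B$.

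For part 1), I would first invoke the inclusion-preserving bijection between the primes of $S^{-1}B$ and the primes $\q$ of $B$ disjoint from $S$, so that $\Max\SO(X)'$ consists of the primes of $B$ maximal among those avoiding $S$. The key point is to promote such a prime to a genuine maximal ideal of $B$ lying over a real maximal ideal of $A$. If $\mathfrak{M}\in\Max\SO(X)'$ contracts to $\q\subset B$, then integrality of $S^{-1}A\to S^{-1}B$ forces $\mathfrak{M}\cap\SO(X)$ to be maximal, hence of the form $\S(X)^{-1}\m$ with $\m\in\ReMax A$ by Corollary \ref{maxreg1bis}; thus $\q\cap A=\m$ is real maximal, and Proposition \ref{LOP}(2) applied to $A\to B$ makes $\q$ maximal in $B$. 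Conversely, if $\m'\in\Max B$ satisfies $\m'\cap A=\m\in\ReMax A$, then $\m\cap S=\emptyset$ by Proposition \ref{keyregprime} (real maximal ideals are precisely those avoiding $\S(X)$), so $\m'\cap S=\emptyset$ and $\m'$ survives in $S^{-1}B$, where it is maximal because it lies over the maximal ideal $\S(X)^{-1}\m$. This produces the announced bijection.

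For part 2), assume $X$ irreducible, so $B=\Pol(X')$ is a domain with fraction field $\K(X)$. The equality for $\SO(X')$ is exactly Proposition \ref{equalreg} applied to the domain $\Pol(X')$, whose real maximal ideals are $\ReMax\Pol(X')$. For $\SO(X)'$, I would write the domain $\SO(X)'$ as $\bigcap_{\mathfrak{M}\in\Max\SO(X)'}(\SO(X)')_{\mathfrak{M}}$; each factor equals $\Pol(X')_{\m'}$ for the $\m'$ attached to $\mathfrak{M}$ in part 1), and since a prime of $B$ contracting to a maximal ideal of $A$ is automatically maximal, the index set is precisely $\{\m'\in\Sp\Pol(X')\mid \m'\cap\Pol(X)\in\ReMax\Pol(X)\}$, giving the middle expression. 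For the first expression, I would use that localization commutes with integral closure to identify $(\SO_{X,x})'$ with the semilocalization $(\Pol(X)\setminus\m_x)^{-1}\Pol(X')=\bigcap_{\m'\cap\Pol(X)=\m_x}\Pol(X')_{\m'}$; intersecting over $x\in X$ and using that the ideals $\m_x$ run exactly through $\ReMax\Pol(X)$ recovers the same intersection.

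I expect the bookkeeping in part 1) to be the main obstacle: one must ensure that a maximal ideal of the localization $S^{-1}B$ corresponds to a true maximal ideal of $B$, not merely to a prime maximal among those avoiding $S$. This is exactly where integrality is indispensable, via the fact that contractions of maximal ideals along integral extensions stay maximal together with Proposition \ref{LOP}(2); without it the primes maximal away from $S$ could fail to lie over maximal ideals of $A$, and the clean description in terms of $\ReMax\Pol(X)$ would break down. Once part 1) is in place, part 2) is essentially formal, combining the intersection formula for domains with the commutation of localization and integral closure.
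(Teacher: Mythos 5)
Your proof is correct and follows essentially the same route as the paper: both identify $\SO(X)'$ with $\S(X)^{-1}\Pol(X')$, use the lying-over property together with the realness of maximal ideals of $\SO(X)$ to describe $\Max\SO(X)'$, and deduce part 2) from the intersection formula over (real) maximal ideals and the commutation of localization with integral closure. Your version merely spells out more carefully the bookkeeping between maximal ideals of the localization and true maximal ideals of $\Pol(X')$, which the paper compresses into citations of Propositions \ref{maxreg1}, \ref{maxreg2} and \ref{LOP}.
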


\begin{proof}
Consider the integral extension $\SO(X)=\S(X)^{-1}\Pol(X)\to
\SO(X)'=\S(X)^{-1}\Pol(X')$.
It follows from Propositions \ref{maxreg1}, \ref{maxreg2}
and \ref{LOP} that $\Max\SO(X)'$ is in bijection with the set of maximal
ideals of $\Pol(X')$ that do not meet $\S(X)$ which is the set of maximal
ideals of $\Pol(X')$ that intersect $\Pol(X)$ as maximal ideals
associated to points of $X$.
The last equality in the second statement is a consequence of 
Proposition \ref{PolyReglocalisation}. Notice that if
$\m'\in\Max\SO(X')$ then
$\SO(X')_{\m'}=\Pol(X')_{\m'\cap\Pol(X')}$ (Proposition \ref{maxreg2}). By the previous remark
and Proposition
\ref{maxreg2} we get 
$$\SO(X)'=(\bigcap_{x\in
  X}\SO_{X,x})'=\bigcap_{x\in
  X}(\SO_{X,x})'=\bigcap_{\m'\in\Sp\Pol(X'),\;\m'\cap\Pol(X)\in\ReMax\Pol(X)}\Pol(X')_{\m'},$$ that proves the second statement.
\end{proof}

\begin{ex}\label{CubicIsolatedPoint} Consider the cubic $X=\Z(y^2-x^2(x-1))$ with an isolated point at the origin. Then $\Pol(X')=\Pol(X)[y/x]\simeq \R[x,z]/(z^2-(x-1))$, setting $z=y/x$.
The function $f=1/(1+z^2)=x^2/(x^2+y^2)$ is regular on $X'$. However
$f\not\in \Pol(X')_{\m'}$ for the non-real maximal ideal
$\m'=(1+z^2)$ of $\Pol(X')$ lying over
the (real) maximal ideal $\m=(x,y)$ of the origin in
$\Pol(X)$. Indeed we have $1/f \in\m'$. In particular $f\in \SO(X')\setminus \SO(X)'$.
\end{ex}

\begin{ex} 
\label{ExKollar} Consider the Koll\'ar surface $X=\Z(y^3-x^3(1+z^2))$ \cite{KN}. Then $\Pol(X')=\Pol(X)[y/x]\simeq \R[x,t,z]/(t^3-(1+z^2))$, setting $t=y/x$.
The function $f=1/(t^2+t+1+z^2)=x^2/(y^2+xy+x^2+x^2z^2)$ is regular on
$X'$. Let $\m=(x,y,z)$ be the maximal and real ideal
  of polynomial functions on $X$ that vanish at the origin. Over
  $\m$ we have two maximal ideals of $\Pol(X')$, namely $\m'=(x,t-1,z),\,\m''=(x,t^2+t+1,z)$, and only one of
  these two ideals is real. We have
  $f\not\in\Pol(X')_{\m''}$ since $1/f \in
  \m''$. It follows that $f\in \SO(X')\setminus \SO(X)'$.
\end{ex}

We denote by $\Norm(X_{\C})$ the normal locus of $X_{\C}$ i.e the set of $y\in X_{\C}$ 
such that $\SO_{X_{\C},y}$ is integrally closed.
From Propositions \ref{maxreg2}, \ref{normalisationred} and
\ref{closint1}:

\begin{prop}
\label{intclosreel2}
We have:
\begin{enumerate}
\item[1)] $\Pol(X)$ is integrally closed if and only if $\Norm(X_{\C})=X_{\C}$.
\item[2)] $\SO(X)$ is integrally closed if and only if $X\subset \Norm(X_{\C})$.
\end{enumerate}
\end{prop}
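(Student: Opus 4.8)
The plan is to deduce both equivalences from the local nature of integral closure, from the description of the maximal ideals of $\Pol(X_\C)$ and $\SO(X)$, and from the quoted equivalence ``normal $\iff$ geometrically normal''. The guiding observation is that the maximal ideals of $\Pol(X)$ (after complexification) account for \emph{all} points of $X_\C$, whereas by Proposition \ref{maxreg2} the maximal ideals of $\SO(X)$ see only the \emph{real} points $x\in X$; this is exactly what distinguishes the global condition $\Norm(X_\C)=X_\C$ in 1) from the condition $X\subset\Norm(X_\C)$ in 2).

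For 1), $\Pol(X)$ is integrally closed if and only if $X$ is normal, hence, by the well-known equivalence with geometric normality, if and only if $X_\C$ is normal, i.e. $\Pol(X_\C)=\SO(X_\C)$ is integrally closed. Now $\Pol(X_\C)\to\Pol(X_\C)'$ is an isomorphism if and only if it becomes one after localizing at every maximal ideal; since localization commutes with integral closure \cite[Prop. 5.12]{AM}, the localization at $\m_y$ is $\SO_{X_\C,y}\to(\SO_{X_\C,y})'$. As the maximal ideals of the reduced finitely generated $\C$-algebra $\Pol(X_\C)$ are exactly the $\m_y$ for $y\in X_\C$, with $\Pol(X_\C)_{\m_y}=\SO_{X_\C,y}$, this happens precisely when $\SO_{X_\C,y}$ is integrally closed for every $y\in X_\C$, that is, when $\Norm(X_\C)=X_\C$.

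For 2), I would run the same localization argument over $\SO(X)$. The morphism $\SO(X)\to\SO(X)'$ is an isomorphism if and only if it is one after localizing at each maximal ideal. By Proposition \ref{maxreg2} these maximal ideals are exactly the $\m_x$ with $x\in X$, and $\SO(X)_{\m_x}=\SO_{X,x}$; again using \cite[Prop. 5.12]{AM} the localized map is $\SO_{X,x}\to(\SO_{X,x})'$. Hence $\SO(X)$ is integrally closed if and only if $\SO_{X,x}$ is integrally closed for every real point $x\in X$. To identify this with $X\subset\Norm(X_\C)$, I would first check that at a real point the complex local ring is the complexification of the real one: since $\m_x\otimes_\R\C$ is the maximal ideal of $x$ in $\Pol(X_\C)=\Pol(X)\otimes_\R\C$, and any $g\in\Pol(X_\C)$ with $g(x)\neq0$ satisfies $g\bar g\in\Pol(X)\setminus\m_x$, inverting $\Pol(X)\setminus\m_x$ makes $g$ a unit and one obtains $\SO_{X_\C,x}=\SO_{X,x}\otimes_\R\C$. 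The local instance of ``normal $\iff$ geometrically normal'' then yields that $\SO_{X,x}$ is integrally closed if and only if $\SO_{X_\C,x}$ is, i.e. if and only if $x\in\Norm(X_\C)$; ranging over all $x\in X$ gives the claim.

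The main obstacle is this last step, namely upgrading the global equivalence ``normal $\iff$ geometrically normal'' to the local statement that $\SO_{X,x}$ is integrally closed exactly when $x\in\Norm(X_\C)$. This needs both the identification $\SO_{X_\C,x}=\SO_{X,x}\otimes_\R\C$ and the fact that normality of a reduced Noetherian local ring is preserved and reflected under the finite separable base change $\R\to\C$. Some care is required when $X$ is reducible, where $\SO_{X,x}$ need not be a domain; there I would apply the argument to the reduced local ring, equivalently componentwise using Proposition \ref{normalisationred}, so that the equivalence persists through the product decomposition $\SO(X)'=\prod_i\SO(X_i)'$.
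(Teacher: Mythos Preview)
Your proof is correct and follows essentially the same route as the paper, which simply records that the statement follows from Propositions \ref{maxreg2}, \ref{normalisationred} and \ref{closint1}. You spell out more explicitly the one point the paper leaves implicit, namely that for a real point $x\in X$ the local ring $\SO_{X,x}$ is integrally closed if and only if $\SO_{X_\C,x}$ is; your identification $\SO_{X_\C,x}=\SO_{X,x}\otimes_\R\C$ together with \'etale ascent and faithfully flat descent of normality handles this cleanly.
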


We give in Remark \ref{rem-intclos} an example of a real algebraic set with an integrally closed ring of regular functions, while its ring of polynomial functions is not integrally closed.

\section{Totally real normalization}

As we have seen, the study of the integral closure of the ring of regular functions of a real algebraic set $X$ is highly related to the behaviour of the complexification $\pi_{\C}':X'_{\C}\rightarrow X_{\C}$ of its normalization map $\pi':X'\to X$.  In the sequel, we focus on those real algebraic sets for which the normalization is {\it totally real}. 

\begin{defn}
\label{trn}
We say a real algebraic set $X$ has a totally real normalization if
$\pi_{\C}'^{-1}(X)=X'$. 
\end{defn}

We will see later that real algebraic varieties with a
totally real normalization satisfy very natural properties with respect to finite
birational maps onto them. As an appetizer, we begin with a criterion for the equality between $\SO(X)'$ and $\SO(X')$.

\begin{prop}
\label{egalite}
The rings $\SO(X)'$ and $\SO(X')$ are isomorphic if and only
if $X$ has a totally real normalization.
\end{prop}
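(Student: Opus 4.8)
The plan is to compare the two rings through the description of their maximal ideals: by the construction recalled in the proof of Proposition \ref{closint1} one has $\SO(X)'=\S(X)^{-1}\Pol(X')$, and its maximal ideals are exactly the $\S(X)^{-1}\m'$ where $\m'$ is a maximal ideal of $\Pol(X')$ with $\m'\cap\S(X)=\emptyset$, that is, lying over a real maximal ideal of $\Pol(X)$; meanwhile every maximal ideal of $\SO(X')$ is real by Proposition \ref{maxreg1}. Both rings sit inside $\K(X)$ with a canonical injection $\SO(X)'\hookrightarrow\SO(X')$, so the task is to decide when this injection is surjective and, in the remaining case, to separate the two rings by a ring-theoretic invariant. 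I would first record the dictionary: a non-real maximal ideal $\m'$ of $\Pol(X')$ corresponds to a conjugate pair of non-real points of $X'_{\C}$, and $\m'\cap\Pol(X)$ is real exactly when $\pi_{\C}'$ sends them to a real point of $X$. Hence $X$ fails to have a totally real normalization if and only if there is a non-real $\m'$ lying over a real maximal ideal of $\Pol(X)$; and real maximal ideals of $\Pol(X')$ always lie over real maximal ideals of $\Pol(X)$, since real points map to real points.

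For the implication \emph{totally real $\Rightarrow$ isomorphic}, the totally real hypothesis makes the maximal ideals $\m'$ of $\Pol(X')$ meeting $\S(X)$ trivially coincide with the real maximal ideals of $\Pol(X')$. Then any $s\in\S(X')$, being nowhere zero on the real points of $X'$, lies outside every real maximal ideal of $\Pol(X')$ (Proposition \ref{maxreg2}), hence outside every maximal ideal of $\SO(X)'$, and is therefore a unit in $\SO(X)'$. Writing an arbitrary element of $\SO(X')=\S(X')^{-1}\Pol(X')$ as $g/s$ with $g\in\Pol(X')\subseteq\SO(X)'$ and $s\in\S(X')$ then gives $\SO(X')\subseteq\SO(X)'$, so the canonical injection is an isomorphism.

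For the converse I would argue by contraposition and exhibit a non-real maximal ideal of $\SO(X)'$. If $X$ is not totally real, choose a non-real maximal ideal $\m'$ of $\Pol(X')$ lying over a real maximal ideal of $\Pol(X)$. As in the proof of Proposition \ref{maxreg1}, non-realness of $\m'$ produces $a=1+\sum b_i^2\in\m'$. Since $\m'$ is disjoint from $\S(X)$, the element $a/1$ lies in the maximal ideal $\S(X)^{-1}\m'$ of $\SO(X)'$ whereas $1/1$ does not, and the identity $a/1=(1/1)^2+\sum (b_i/1)^2$ certifies that $\S(X)^{-1}\m'$ is not real. As every maximal ideal of $\SO(X')$ is real (Proposition \ref{maxreg1}), and realness of a maximal ideal is preserved under any ring isomorphism (being expressed purely in terms of sums of squares), the two rings cannot be isomorphic.

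The step I expect to require the most care is precisely this last one. For the converse it is tempting, and matches Examples \ref{CubicIsolatedPoint} and \ref{ExKollar}, to observe merely that $1/a\in\SO(X')\setminus\SO(X)'$; but that only shows the canonical inclusion is strict, not that the rings are non-isomorphic. The point is to upgrade this to an isomorphism-invariant statement, namely the existence of a non-real maximal ideal in $\SO(X)'$, which the same element $a$ conveniently furnishes. Everything is carried out for a general (not necessarily irreducible) real algebraic set $X$, using only part (1) of Proposition \ref{closint1} and the explicit localization $\SO(X)'=\S(X)^{-1}\Pol(X')$, so no reduction to the irreducible case is needed.
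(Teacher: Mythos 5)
Your proof is correct, and its backbone is the same as the paper's: both directions rest on the dictionary of Proposition \ref{closint1}(1) identifying $\Max\SO(X)'$ with the maximal ideals of $\Pol(X')$ lying over real maximal ideals of $\Pol(X)$, combined with Proposition \ref{maxreg1} saying every maximal ideal of $\SO(X')$ is real. There are, however, two execution differences worth noting. First, the paper reduces to the irreducible case via Proposition \ref{normalisationred} and then proves the ``totally real $\Rightarrow$ isomorphic'' direction by matching the two intersection formulas in Proposition \ref{closint1}(2); you instead show directly that under the hypothesis every $s\in\S(X')$ avoids all maximal ideals of $\SO(X)'=\S(X)^{-1}\Pol(X')$ and is therefore a unit there, which gives $\SO(X')\subseteq\SO(X)'$ without invoking the intersection formula. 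Since that formula is only stated for irreducible $X$ while part (1) of Proposition \ref{closint1} and the localization description are general, your route is precisely what lets you dispense with the reduction step. Second, you make explicit a point the paper leaves implicit: the converse needs more than the strictness of the inclusion $\SO(X)'\hookrightarrow\SO(X')$ (which is all that Examples \ref{CubicIsolatedPoint} and \ref{ExKollar} exhibit); one must produce an isomorphism invariant, and your observation that $\S(X)^{-1}\m'$ is a genuinely non-real maximal ideal of $\SO(X)'$, via the element $1+\sum b_i^2$ as in Proposition \ref{keyreg}, supplies exactly that. This added care is a small but real improvement in rigour over the paper's one-line citation of Propositions \ref{closint1} and \ref{maxreg2} for this direction.
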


\begin{proof}
By Proposition \ref{normalisationred}, it is sufficient to deal with
the case $X$ is irreducible.

Assume there exists $x\in X$ such that $\pi_{\C}'^{-1}(x)$ is not
totally real. It forces the existence of a non-real maximal ideal
$\m'$ of $\Pol(X')$ such that $\m'\cap \Pol(X)=\m_x$. By Propositions \ref{closint1} (first statement) and
\ref{maxreg2} then $\SO(X)'$ and $\SO(X')$ cannot be
isomorphic.

Assume the fibers of $\pi'_{\C}:X'_{\C}\rightarrow X_{\C}$ over the points
of $X$ are totally real. It follows
that $\ReMax\Pol(X')$ is the set of ideals of $\Pol(X')$ lying over
the ideals of $\ReMax\Pol(X)$ and we conclude the proof using the
second statement of Proposition \ref{closint1}.
\end{proof}

Taking the integral closure $\Pol(X)'$ of $\Pol(X)$ in $\K(X)$ is an
algebraic process but it has a geometric counterpart, indeed $\Pol(X)'$
is the ring of polynomial functions of an algebraic set
$X'$. 
We may wonder whether taking the integral closure of $\SO(X)$ into $\K(X)$
admits also a geometric counterpart, namely whether $\SO(X)'$ is the ring of regular functions
of an algebraic set. 

\begin{prop}
\label{clotregular}
If $\SO(X)'$ is the ring of regular functions
of an algebraic set, then $\SO(X)'$ is isomorphic to $\SO(X')$.
\end{prop}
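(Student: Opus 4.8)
The plan is to argue by contraposition through Proposition \ref{egalite}. That proposition identifies the isomorphism $\SO(X)'\simeq\SO(X')$ with the condition that $X$ has a totally real normalization, so it suffices to show that if $\SO(X)'$ is the ring of regular functions of some real algebraic set $Y$, then $X$ has a totally real normalization. The whole argument rests on the fact recorded in Proposition \ref{maxreg1} that in a ring of regular functions every maximal ideal is real. Since realness of an ideal $I\subset A$ is the intrinsic condition $\sum a_i^2\in I\Rightarrow a_i\in I$, which is preserved by any isomorphism of rings, an isomorphism $\SO(X)'\simeq\SO(Y)$ forces every maximal ideal of $\SO(X)'$ to be real.

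Assuming this, I would argue by contradiction: suppose $X$ does \emph{not} have a totally real normalization. As in the proof of Proposition \ref{egalite}, this produces a non-real maximal ideal $\m'$ of $\Pol(X')$ whose contraction $\m'\cap\Pol(X)=\m_x$ is a real maximal ideal of $\Pol(X)$. Because $\m_x$ is real, Proposition \ref{keyreg} gives $\S(X)\cap\m_x=\emptyset$, so $\m'$ does not meet $\S(X)$ and therefore survives in the localization $\SO(X)'=\S(X)^{-1}\Pol(X')$; concretely, by Proposition \ref{closint1}(1) it corresponds to a maximal ideal $\mathfrak{n}=\S(X)^{-1}\m'$ of $\SO(X)'$.

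The key step is to check that $\mathfrak{n}$ is non-real. Since $\m'$ is a non-real maximal ideal of the finitely generated $\R$-algebra $\Pol(X')$, its residue field $\Pol(X')/\m'$ is $\C$, so there is $b\in\Pol(X')$ with $b^2+1\in\m'$. Passing to $\SO(X)'$ we obtain $(b/1)^2+1^2=(b^2+1)/1\in\mathfrak{n}$, while $1\notin\mathfrak{n}$ since $\mathfrak{n}$ is proper; thus $\mathfrak{n}$ is not real. This contradicts the fact, established in the first paragraph, that every maximal ideal of $\SO(X)'$ is real. Hence $X$ must have a totally real normalization, and Proposition \ref{egalite} yields $\SO(X)'\simeq\SO(X')$.

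I expect the only delicate point to be the descent in the third paragraph: verifying that non-realness of $\m'$ is not destroyed when inverting the elements of $\S(X)$, i.e.\ that the witness survives in the localization. This is precisely why I single out the residue-field computation (localizing at a multiplicative set disjoint from $\m'$ leaves the residue field $\C$ unchanged), which turns the abstract realness test into the explicit relation $b^2+1\in\mathfrak{n}$; everything else is bookkeeping with the propositions already proved.
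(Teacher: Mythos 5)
Your proposal is correct and follows essentially the same route as the paper: use Proposition \ref{maxreg1} (plus the fact that realness of ideals is a ring-isomorphism invariant) to conclude that every maximal ideal of $\SO(X)'$ is real, combine this with Proposition \ref{closint1} to force the fibers of $\pi_{\C}'$ over $X$ to be totally real, and conclude by Proposition \ref{egalite}. The only difference is presentational: you run the middle step by contradiction and make explicit the witness $b^2+1\in\m'$ showing that a non-real maximal ideal of $\Pol(X')$ lying over a real one would survive as a non-real maximal ideal of $\S(X)^{-1}\Pol(X')$, a verification the paper leaves implicit.
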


\begin{proof} By assumption $\SO(X)'$ is the ring of regular functions
of an intermediate algebraic set $Y$ between $X$ and
$X'$ i.e $\Pol(X)\subset\Pol(Y)\subset\Pol(X')$.
Since $\SO(X)'=\SO(Y)$ then, by Proposition \ref{maxreg1}, all
the maximal ideals of $\SO(X)'$ are necessarily real. It follows from
Proposition \ref{closint1} that the fibers of $\pi_{\C}':X'_{\C}\rightarrow X_{\C}$ over the points
of $X$ are totally real. By Proposition \ref{egalite}, we get the
proof.
\end{proof}

\begin{rem}\label{rem-intclos} The ring of regular functions of a real algebraic set can be integrally
closed while its ring of polynomial functions is not integrally closed.
 Consider for instance an irreducible algebraic set $X$ of dimension
  one such that $X_{\C}$ is singular and has only non-real
  singularities (e.g $X=\Z(y^2-(x^2+1)^2x)$). Let $X'$ be the
  normalization of $X$. Since for all $x\in X$ the local ring $\SO_{X,x}$ is 
  regular then it follows from 
  Proposition \ref{closint1} that $\SO(X)=\SO(X)'$. By Proposition \ref{egalite}, we see
  that $\SO(X)$ and $\SO(X')$ are isomorphic. Since $X_{\C}$ is
  not normal then $X$ is not normal and thus
  $\Pol(X)$ and $\Pol(X')$ are not isomorphic rings. 
\end{rem}

Having a totally real normalization is a stable property under finite birational map. Even more, it allows to keep surjectivity similarly to the complex setting. More precisely :

\begin{prop}
\label{prestotrealn} Let $\pi:Y\to X$ be a finite birational map between real algebraic sets, and assume that $X$ admits a totally real
normalization. Then $\pi$ is surjective, the fiber $\pi_{\C}^{-1}(x)$ over a real point $x\in X$ is totally real, and $Y$ has also a totally real
normalization.
\end{prop}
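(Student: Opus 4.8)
The plan is to propagate total reality from $X$ to $Y$ through the universal property of the normalization. Since $\pi$ is finite and birational, the factorization recalled just before Proposition \ref{lem-closed} provides a map $\psi : X' \to Y$ with $\pi' = \pi \circ \psi$. First I would record that $\psi$ is itself the normalization map of $Y$, i.e. that $Y' = X'$: from $\Pol(X) \subseteq \Pol(Y) \subseteq \Pol(X')$ one sees that $\Pol(X')$ is finite over $\Pol(Y)$ (being already finite over $\Pol(X)$) and birational over it (all three rings share the total ring of fractions $\K(X)$ since $\pi$ and $\pi'$ are birational), and as $\Pol(X')$ is integrally closed this forces $\Pol(Y)' = \Pol(X')$. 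Complexifying $\pi' = \pi \circ \psi$ yields the commutative relation $\pi'_{\C} = \pi_{\C} \circ \psi_{\C}$ on $X'_{\C}$. Because $X'$, and hence $X'_{\C}$, is normal, $\psi_{\C} : X'_{\C} \to Y_{\C}$ is the normalization map of $Y_{\C}$; in particular both $\pi'_{\C}$ and $\psi_{\C}$ are surjective. These facts, together with the hypothesis $\pi_{\C}'^{-1}(X) = X'$, are all I will use.

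Each assertion then becomes a short chase. For surjectivity of $\pi$: given $x \in X$, surjectivity of $\pi'_{\C}$ gives $w \in X'_{\C}$ with $\pi'_{\C}(w) = x$; as $x$ is real, $w \in \pi_{\C}'^{-1}(X) = X'$, so $x = \pi(\psi(w))$ lies in $\pi(Y)$. For total reality of the fibers: given $y \in \pi_{\C}^{-1}(x)$ with $x \in X$, surjectivity of $\psi_{\C}$ produces $w \in X'_{\C}$ with $\psi_{\C}(w) = y$, so $\pi'_{\C}(w) = \pi_{\C}(y) = x \in X$; the hypothesis forces $w \in X'$, whence $y = \psi(w)$ is a real point of $Y$. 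For total reality of the normalization of $Y$, whose map is $\psi_{\C}$, I must show $\psi_{\C}^{-1}(Y) = X'$: the inclusion $X' \subseteq \psi_{\C}^{-1}(Y)$ is automatic, and conversely if $w \in X'_{\C}$ satisfies $\psi_{\C}(w) = y \in Y$ then $\pi'_{\C}(w) = \pi(y) \in X$, so again $w \in \pi_{\C}'^{-1}(X) = X'$.

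The only genuinely delicate point is the structural input of the first paragraph, and above all the claim that the complexified map $\psi_{\C}$ really is the (surjective) normalization map of $Y_{\C}$; this rests precisely on the preservation of normality under complexification recalled in the preliminaries. To treat a possibly reducible $X$ I would reduce to the irreducible case: by Proposition \ref{normalisationred} the normalization splits as $X' = \bigsqcup_i X_i'$, the map $\pi$ restricts to finite birational maps $Y_i \to X_i$ on matching components (the correspondence being controlled by the contraction bijection of minimal primes in Lemma \ref{IntegralExtensionHypothesis}), and the hypothesis $\pi_{\C}'^{-1}(X) = X'$ restricts componentwise to $(\pi_i')_{\C}^{-1}(X_i) = X_i'$, because a real point lying in $(X_i)_{\C}$ already lies in $X_i$. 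Granting these reductions, the three diagram chases above finish the proof.
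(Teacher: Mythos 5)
Your proof is correct and takes essentially the same route as the paper's: both identify $X'$ as the common normalization of $X$ and $Y$, factor $\pi'=\pi\circ\psi$, and observe that any point of $X'_{\C}$ lying over a point $y\in Y$ lies in $(\pi')_{\C}^{-1}(\pi(y))$, which is real by hypothesis. The paper compresses this into two sentences; you have merely made explicit the supporting details (that $\psi_{\C}$ is the surjective normalization of $Y_{\C}$ via geometric normality, and the reduction to irreducible components) that it leaves implicit.
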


\begin{proof}
It is clear that $X$ and $Y$ have the same normalization $X'$. The results follow from the fact that for $y\in Y$, a
point of $X'_{\C}$ lying over
$y$ necessarily belongs to the fiber $(\pi')_{\C}^{-1}(\pi(y))$, which is real by assumption.
\end{proof}

Concerning more algebraic aspects, algebraic sets with totally real normalization give rise to a going-up property for real prime ideals.

\begin{prop}\label{prestotrealn2}
Let $\pi:Y\to X$ be a finite birational map between real algebraic sets, and assume that $X$ admits a totally real
normalization. Then
\begin{itemize}
\item[1)] The map $\Sp\Pol(Y)\to\Sp\Pol(X)$ has totally real fibers over
the real ideals. Namely, if an ideal 
$\q\in\Sp\Pol(Y)$ lies over $\p\in\ReSp\Pol(X)$ then $\q$ is a real ideal.
\item[2)] The integral extension $\Pol(X)\to \Pol(Y)$ satisfies the
  going-up property for the real prime ideals. Namely, given two real
  prime ideals $\p\subset\p'$ of $\Pol(X)$ and $\q\in\ReSp\Pol(Y)$
  lying over $\p$, there exists $\q'\in\ReSp \Pol(Y)$ lying over $\p'$
  such that $\q\subset\q'$.
\end{itemize}
\end{prop}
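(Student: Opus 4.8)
The plan is to prove 1) first and then deduce 2) from it by means of the classical going-up theorem. The guiding principle for 1) is the characterization of real prime ideals through dimension: by the real Nullstellensatz \cite[Thm. 4.1.4]{BCR} together with dimension theory, a prime $\q\subset\Pol(Y)$ is real if and only if the real zero set $\Z(\q)$ has dimension equal to $\dim\Pol(Y)/\q$. So the whole point is to produce enough real points in $\Z(\q)$, and this is exactly where the hypothesis that $X$ has a totally real normalization enters, through Proposition \ref{prestotrealn}.

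Concretely, let $\q\in\Sp\Pol(Y)$ lie over $\p\in\ReSp\Pol(X)$ and set $d=\dim\Pol(X)/\p$. Since $\pi$ is finite and birational, the extension $\Pol(X)\hookrightarrow\Pol(Y)$ is integral and injective, hence so is the induced extension of domains $\Pol(X)/\p\hookrightarrow\Pol(Y)/\q$; in particular $\dim\Pol(Y)/\q=d$, and the associated finite morphism of complex varieties is surjective. As $\p$ is real we have $\dim\Z(\p)=d$. I would then show $\pi(\Z(\q))=\Z(\p)$: the inclusion $\pi(\Z(\q))\subseteq\Z(\p)$ is immediate since $\p=\q\cap\Pol(X)$, and for the reverse inclusion I pick $w\in\Z(\p)$, use surjectivity of the complex morphism to find a complex point of the subvariety cut out by $\q$ over $w$, and invoke Proposition \ref{prestotrealn} to ensure that the whole fiber $\pi_{\C}^{-1}(w)$ is totally real, so that this point is in fact a real point lying in $\Z(\q)$. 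Consequently $\pi$ maps $\Z(\q)$ onto $\Z(\p)$, whence $\dim\Z(\q)\geq\dim\Z(\p)=d$; combined with the obvious bound $\dim\Z(\q)\leq\dim\Pol(Y)/\q=d$ this gives $\dim\Z(\q)=d$ and therefore $\q$ is real, proving 1).

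For 2), given real primes $\p\subset\p'$ of $\Pol(X)$ and $\q\in\ReSp\Pol(Y)$ over $\p$, the going-up theorem applied to the integral extension $\Pol(X)\hookrightarrow\Pol(Y)$ \cite[Thm. 5.11]{AM} yields a prime $\q'\in\Sp\Pol(Y)$ with $\q\subset\q'$ and $\q'\cap\Pol(X)=\p'$. Since $\p'$ is real, part 1) immediately shows that $\q'$ is real, i.e.\ $\q'\in\ReSp\Pol(Y)$, which is exactly what is required. Thus 2) is a formal consequence of 1) together with classical going-up.

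The main obstacle is concentrated in the equality $\pi(\Z(\q))=\Z(\p)$ inside the proof of 1): one has to upgrade the fibrewise total reality of Proposition \ref{prestotrealn}, which is a statement about the fibers over closed (real) points, into information about the generic point of the subvariety cut out by $\q$, namely the existence of sufficiently many real points to force $\dim\Z(\q)=d$. Everything else — integrality, preservation of dimension under integral extensions of domains, and the going-up step — is routine.
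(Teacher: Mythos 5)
Your proof is correct, and your part 2) coincides with the paper's: classical going-up for the integral extension followed by part 1). For part 1) the underlying mechanism is the same as in the paper --- Proposition \ref{prestotrealn} supplies the total reality of the fibres of $\pi_{\C}$ over real points, and a dimension count via the real Nullstellensatz converts this into reality of $\q$ --- but the organisation is genuinely different. The paper argues by contradiction: if $\q$ is not real, then $\q_{\C}$ splits as $\ir\cap\overline{\ir}$ with $\ir\neq\overline{\ir}$ conjugate primes, the real locus $\Z(\q)$ sits inside $W\cap\overline{W}$ (where $W=\Z_{\C}(\ir)$), which has dimension strictly smaller than $\dim_{\C}W=\dim\Z(\p)$, so the finite surjection $W\to V_{\C}$ must have a totally non-real fibre over some real point of $\Z(\p)$, contradicting Proposition \ref{prestotrealn}. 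You argue directly: lying-over for the injective integral extension $\Pol(X_{\C})/\p_{\C}\hookrightarrow\Pol(Y_{\C})/\q_{\C}$ produces a complex preimage in $\Z_{\C}(\q_{\C})$ of every $w\in\Z(\p)$, total reality of $\pi_{\C}^{-1}(w)$ forces that preimage to be real, hence $\pi(\Z(\q))=\Z(\p)$ and $\dim\Z(\q)=\dim\Pol(Y)/\q$, which characterises real primes. Your version is essentially the contrapositive of the paper's and avoids the explicit decomposition into conjugate components; what it uses instead is the standard (but worth citing precisely) fact that a prime $\q$ of an affine $\R$-algebra is real if and only if $\dim\Z(\q)$ equals the Krull dimension of the quotient, whose ``only if'' direction rests on the real Nullstellensatz together with the catenarity of affine domains. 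Both routes are of comparable length; yours is arguably cleaner since it never needs to name $\ir$ and $\overline{\ir}$.
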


\begin{proof}
Let us prove 1).
We see $\Pol(X)$ (resp. $\Pol(Y)$) as the subring of
$\Pol(X_{\C})$ (resp. $\Pol(Y_{\C})$) of elements fixed by the
complex conjugation. We also see $X$ (resp. $Y$) as a subset of
points of $X_{\C}$ (resp. $Y_{\C}$) in the same way. In both cases, we denote the complex conjugation with a bar.
By the lying over property (Proposition \ref{LOP}) there is $\q\in\Sp \Pol(Y)$ lying over $\p$.
Assume $\q$ is not a real ideal. 
It follows that the ideal
$\q_{\C}=\q\otimes_{\R}\C$ is no longer a prime ideal in
$\Pol(Y_{\C})$ and there exists
distinct conjugated ideals $\ir,\overline{\ir}\in\Sp\Pol(Y_{\C})$ such
that $\q_{\C}=\ir\cap\overline{\ir}$. On the contrary, we have $\p_{\C}=\p\otimes_{\R}\C\in \Sp\Pol(X_{\C})$ since $\p$ is a
real ideal. We have
integral extensions 
$$\dfrac{\Pol(X_{\C})}{\p_{\C}}\to
\dfrac{\Pol(Y_{\C})}{\ir} ~~~\textrm{~~~and~~~}~~~ \dfrac{\Pol(X_{\C})}{\p_{\C}}\to
\dfrac{\Pol(Y_{\C})}{\overline{\ir}}$$ that induce finite
polynomial maps between irreducible algebraic sets $W\to
V_{\C}$, $\overline{W}\to V_{\C}$ where $V=\Z(\p)$,
$W=\Z_{\C}(\ir)$ and $\overline{W}=\Z_{\C}(\overline{\ir})$. Remark
that $V_{\C}=\Z_{\C}(\p_{\C})$ since $\p$ is a real ideal.
Let $V'=\Z(\q)$. Notice that the integral extension
$$\dfrac{\Pol(X)}{\p}\to\dfrac{\Pol(Y)}{\q}$$ 
does not induce a finite
map $V'\to V$ since $\q$ is different from $\I(V')$ and moreover $V'_{\C}\not=\Z_{\C}(\q_{\C})=W\cup\overline{W}$.
Since we have integral ring extensions, the real algebras
$$\dfrac{\Pol(X)}{\p} ~~~,~~~ \dfrac{\Pol(Y)}{\q}$$ and the complex algebras
$$\dfrac{\Pol(X_{\C})}{\p_{\C}}~~~,~~~\dfrac{\Pol(Y_{\C})}{\ir}~~~,~~~\dfrac{\Pol(Y_{\C})}{\overline{\ir}}$$ have respectively the same
Krull dimension. Since $\p$ is a real ideal, we have $\dim
V=\dim_{\C}V_{\C}=\dim_{\C}W=\dim_{\C}\overline{W}$. We have
$V'=W\cap\overline{W}$ and thus $\dim V'=\dim_{\C}V'_{\C}<\dim
V=\dim_{\C}V_{\C}=\dim_{\C}W=\dim_{\C}\overline{W}$. It follows that
the surjective finite map $W\to V_{\C}$ (which is the restriction of
$\pi_{\C}$ to $W$) has at least a totally
non-real fiber over a point of $V$. 
We get a contradiction using Proposition \ref{prestotrealn} and the proof of 1) is done.

The extension $\Pol(X)\to \Pol(Y)$ is integral and thus satisfies the
going-up property for prime ideals. By 1) any prime ideal of
$\Pol(Y)$ lying over a real prime ideal of $\Pol(X)$ is real, hence $\Pol(X)\to \Pol(Y)$ satisfies the
  going-up property for the real prime ideals and it gives 2).
\end{proof}

We denote by $\overline{A}^Z$ the closure of a set $A$ with respect to
the Zariski topology.

\begin{cor}\label{cor-clo} Let $\pi:Y\to X$ be a finite birational map between real algebraic sets. If $X$ admits a totally real
normalization, then $\pi$ is closed for the Zariski topology.
\end{cor}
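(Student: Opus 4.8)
The plan is to show directly that the image under $\pi$ of a Zariski-closed subset of $Y$ is the zero set of an explicit ideal in $\Pol(X)$, the key input being the real going-up property forced by the totally real normalization hypothesis (Proposition \ref{prestotrealn2}). Since the image of a finite union equals the union of the images, and a finite union of Zariski-closed sets is again Zariski-closed, I would first reduce to the case of an irreducible closed subset $C\subseteq Y$, decomposing an arbitrary closed subset into its finitely many irreducible components. For such a $C$, its vanishing ideal $\q=\I(C)$ is a real prime of $\Pol(Y)$; set $\p=\q\cap\Pol(X)$. This $\p$ is prime, and it is real, because the contraction of a real ideal along the injection $\Pol(X)\hookrightarrow\Pol(Y)$ is real: if $\sum a_i^2\in\p$ with $a_i\in\Pol(X)$, then $\sum a_i^2\in\q$ forces each $a_i\in\q\cap\Pol(X)=\p$.

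The heart of the argument is the identity $\pi(C)=\Z(\p)$, where $\Z(\p)=\{x\in X\mid f(x)=0 \text{ for all } f\in\p\}$ is a Zariski-closed subset of $X$. The inclusion $\pi(C)\subseteq\Z(\p)$ is immediate: for $y\in C$ and $f\in\p$, the value $f(\pi(y))$ equals the value at $y$ of the image $f\circ\pi$ of $f$ in $\Pol(Y)$, and $f\circ\pi\in\q=\I(C)$ vanishes on $C$. For the reverse inclusion, take $x\in\Z(\p)$, so that $\m_x$ is a real maximal ideal of $\Pol(X)$ containing $\p$. Since $\q\in\ReSp\Pol(Y)$ lies over $\p$, and $\p\subseteq\m_x$ are real primes of $\Pol(X)$, the real going-up property of Proposition \ref{prestotrealn2} yields a real prime $\q'\in\ReSp\Pol(Y)$ with $\q\subseteq\q'$ lying over $\m_x$ (if already $\p=\m_x$, then $\q$ itself lies over the maximal ideal $\m_x$ and no lifting is needed). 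As $\q'$ lies over the maximal ideal $\m_x$, it is maximal by Proposition \ref{LOP}, and being real and maximal it has the form $\q'=\m_y$ for a point $y\in Y$ by Proposition \ref{maxreg2}. Then $\q=\I(C)\subseteq\q'=\m_y$ gives $y\in C$, while $\m_y\cap\Pol(X)=\m_x$ gives $\pi(y)=x$; hence $x\in\pi(C)$. This proves $\pi(C)=\Z(\p)$, so $\pi$ is closed for the Zariski topology.

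I expect the only delicate point to be the passage from the algebraic going-up statement to the geometric conclusion: one must ensure that the lifted prime $\q'$ is simultaneously real and maximal, so that it corresponds to a genuine real point of $Y$ rather than to a point lying only in the complexification. This is precisely where the totally real normalization hypothesis enters, through the real going-up of Proposition \ref{prestotrealn2} (which produces $\q'$ in $\ReSp\Pol(Y)$), combined with the fact that primes lying over maximal ideals are maximal (Proposition \ref{LOP}). Were $\q'$ allowed to be non-real, the fiber of $\pi$ over $x$ could be empty and $\pi(C)$ would fail to fill out $\Z(\p)$, which is exactly the surjectivity defect that this hypothesis is designed to rule out.
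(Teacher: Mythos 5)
Your proof is correct and follows essentially the same route as the paper: reduce to an irreducible closed subset, contract its real prime ideal to $\Pol(X)$, and use the real going-up property of Proposition \ref{prestotrealn2} to lift each real maximal ideal containing the contraction to a real maximal ideal of $\Pol(Y)$ containing $\q$, yielding $\pi(C)=\Z(\p)$. Your version merely spells out a few details the paper leaves implicit (the reality of $\p$, the degenerate case $\p=\m_x$).
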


\begin{proof}
Let $W$ be an algebraic subset of $Y$. We aim to show
$\pi(W)$ is an algebraic subset of $X$. It suffices to consider the case $W$
irreducible. So there exists $\q\in\ReSp \Pol(Y)$ such that
$W=\Z(q)$. Let $\p=\q\cap\Pol(X)\in\ReSp\Pol(X)$ and $V=\Z(\p)$. We
have $V=\overline{\pi(W)}^Z$. Let $x\in V$ and $\m_x$ be the
corresponding maximal ideal of $\Pol(X)$. By 2) of Proposition \ref{prestotrealn2} there exists a real
prime ideal $\q'$ lying over $\m_x$ such that $\q\subset\q'$. Since
$\Pol(X)\to \Pol(Y)$ is integral then $\q'$ is maximal and thus
corresponds to a point $y\in W$ such that $\pi(y)=x$. Thus $\pi(W)=V$.
\end{proof}

To illustrate how useful can be such a result, we refer to Proposition
\ref{zerointratcont} in the section ``Zero sets of integral continuous
rational functions'' as an
application in relation with continuous rational functions.

We end this section with a particular focus on finite birational map which are moreover bijective. When the target space admit a totally real normalization, it will lead to a particular rigid class of bijective maps which are not necessarily isomorphisms.

In order to state the result, we introduce the notion of hereditarily
birational maps, inspired by \cite{KN}.
Let $\pi:Y\rightarrow X$ be a finite birational map between
algebraic sets. Let $W$ be an irreducible algebraic subset of $Y$ and
let $V$ denote the Zariski closure of $\pi(W)$.
Then the restriction $\pi_{|W}:W\rightarrow V$ is still finite, but not birational in general.

\begin{defn}
\label{defrestriction}
Let $\pi:Y\rightarrow X$ be a finite birational map between 
algebraic sets. We say that $\pi$ is hereditarily birational if for
every irreducible algebraic subset $W\subset Y$, 
the restriction $\pi_{|W}:W\rightarrow V$ is birational where $V$ denote the Zariski closure of $\pi(W)$.
\end{defn}

Note that this condition is far from being automatic, as illustrated by Koll\'ar surface as in Example \ref{ExKollar}. The normalization $\pi':X'\to X$ is even bijective in this example, however its restriction to $W=X'\cap \{x=0\}$ is the projection map from the curve $\{t^3=1+z^2\}\subset \R^2$ onto the $z$-axis whose inverse is not rational.

\begin{thm}
\label{bijtotrealn}
Let $X$ be a real algebraic set with a totally real
normalization. Let $\pi:Y\to X$ be a bijective finite birational map from a
real algebraic set $Y$. Then
\begin{itemize}
\item $\pi$ is an homeomorphism for the Euclidean, constructible and
  Zariski topologies.
\item the map $\ReSp\Pol(Y)\to
\ReSp\Pol(X)$, $\q\mapsto\q\cap\Pol(X)$, is bijective.
\item $\pi$ is hereditarily birational.
\end{itemize}
\end{thm}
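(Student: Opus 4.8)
The plan is to treat the three assertions separately, the last being the substantial one. Throughout I use that $X$, and hence $Y$, has a totally real normalization (Proposition~\ref{prestotrealn}).

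\emph{Homeomorphisms.} The map $\pi$ is continuous for all three topologies (a polynomial map is continuous for the Zariski and the Euclidean topologies, and being continuous for both it is continuous for the constructible topology, whose closed sets are the Zariski constructible Euclidean closed sets), and it is bijective by hypothesis; so it suffices to prove that $\pi$ is closed in each case. For the Euclidean topology a finite map is proper, hence closed (Proposition~\ref{lem-closed}). For the Zariski topology this is exactly Corollary~\ref{cor-clo}, which applies since $X$ has a totally real normalization. For the constructible topology $\pi$ is injective, so Proposition~\ref{lem-closed} again yields closedness (equivalently, if $C\subset Y$ is Zariski constructible and Euclidean closed, then $\pi(C)$ is Euclidean closed by the first point and Zariski constructible as the image of a constructible set under an injective morphism). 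A continuous closed bijection is a homeomorphism.

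\emph{Bijectivity on real spectra.} Contraction maps $\ReSp\Pol(Y)$ into $\ReSp\Pol(X)$: if $\sum a_i^2\in\q\cap\Pol(X)$ with $a_i\in\Pol(X)$, then $a_i\in\q$, so $a_i\in\q\cap\Pol(X)$. For surjectivity, given $\p\in\ReSp\Pol(X)$ the lying over property (Proposition~\ref{LOP}) produces $\q\in\Sp\Pol(Y)$ over $\p$, and $\q$ is real by Proposition~\ref{prestotrealn2}. For injectivity, suppose $\q_1,\q_2\in\ReSp\Pol(Y)$ both contract to $\p$, and put $W_i=\Z(\q_i)$, $V=\Z(\p)$; by the real Nullstellensatz $\q_i=\I(W_i)$. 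Since $\overline{\pi(W_i)}^Z=\Z(\q_i\cap\Pol(X))=V$ and $\pi$ is Zariski closed (Corollary~\ref{cor-clo}), we get $\pi(W_i)=V$; as $\pi$ is bijective this forces $W_i=\pi^{-1}(V)$, whence $W_1=W_2$ and $\q_1=\I(W_1)=\I(W_2)=\q_2$.

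\emph{Hereditarily birational.} Let $W\subset Y$ be an irreducible algebraic subset and $V$ the Zariski closure of $\pi(W)$, which equals $\pi(W)$ by Corollary~\ref{cor-clo}; I must show $[\K(W):\K(V)]=1$. The starting observation is that $W$ is automatically central: an irreducible real algebraic set has $\I(W)$ a real prime, so $\dim_\R W=\dim_\C W_\C$, which means $W_\C$ is irreducible and the real points of $W$ are Zariski dense in $W_\C$; the same holds for $V$, since $\I(V)=\I(W)\cap\Pol(X)$ is again real. Thus $(\pi_{|W})_\C\colon W_\C\to V_\C$ is a finite dominant map of irreducible complex varieties, and $[\K(W):\K(V)]$ is exactly its degree, i.e. the number of points in a general fibre.

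The decisive step, and the main obstacle, is to convert the bijectivity of $\pi$ on \emph{real} points into a \emph{complex} fibre being a single point, and this is precisely what the totally real hypothesis provides. For any real $v\in V\subset X$ the fibre $\pi_\C^{-1}(v)$ is totally real by Proposition~\ref{prestotrealn}, hence contained in $Y$; since $\pi$ is injective on real points it reduces to the single point $\pi^{-1}(v)$, which lies in $W$ because $v\in\pi(W)$. Hence $(\pi_{|W})_\C^{-1}(v)$ is a single point for every real $v\in V$. On the other hand there is a dense Zariski open $U\subset V_\C$ over which $(\pi_{|W})_\C$ is \'etale of degree $[\K(W):\K(V)]$, so over each point of $U$ the fibre has exactly $[\K(W):\K(V)]$ distinct points; and since the real points of $V$ are Zariski dense in $V_\C$, some real $v$ lies in $U$. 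Over it the fibre has simultaneously one point and $[\K(W):\K(V)]$ points, so the degree is $1$ and $\pi_{|W}$ is birational. I expect this singleton–fibre step to be the crux: without the totally real normalization the complex fibre over a real point can acquire non-real points and the degree jumps (as for the Koll\'ar surface), whereas the centrality of irreducible real subvarieties is exactly what makes $W_\C$, $V_\C$ irreducible with dense real loci, so that this pointwise real information pins down the generic complex degree.
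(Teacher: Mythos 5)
Your proposal is correct and follows essentially the same strategy as the paper's proof: closedness via Proposition \ref{lem-closed} and Corollary \ref{cor-clo} for the first bullet, totally real fibres over real primes (Proposition \ref{prestotrealn2}) plus the real Nullstellensatz for the second, and, for the third, the comparison of the singleton complex fibre over a real point of $V$ (forced by Proposition \ref{prestotrealn} and injectivity of $\pi$) with the generic $d$-point fibre of $(\pi_{|W})_{\C}$. The only divergence is that the paper first reduces to the irreducible case (which is what licenses the use of Proposition \ref{lem-closed}, stated there for irreducible sets, and requires the small extra argument that $\pi$ restricted to an irreducible component is still bijective); your treatment of the last two bullets genuinely avoids this reduction, but your appeal to Proposition \ref{lem-closed} for possibly reducible $X$ and $Y$ would deserve a word.
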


\begin{proof}
We reduce the proof to the irreducible case as follows. Note first
that by definition and Proposition \ref{ReducedIntegralClosure}, each irreducible component of $X$ admits a totally real
normalization. Moreover, $X$ and $Y$ have the same number of irreducible components by birationality of $\pi$, with a correspondence between the components induced by $\pi$. Moreover $\pi$ restricted to such a component $Y_1$ of $Y$ is bijective onto the corresponding component $X_1$ of $X$. We prove the surjectivity like this. The finite birational map $\pi_{|Y_1}:Y_1\to X_1$ induces a map $X_1' \to Y_1$ whose composition $X_1' \to Y_1 \to X_1$ coincides with the normalization $X_1'$ of $X_1$. Then, if $x$ is a point in $X_1$, there exist a real point in the normalization of $X_1$ lying over $x$, and its image in $Y_1$ is then a preimage for $x$ by $\pi_{|Y_1}$.

The map $\pi$ is an homeomorphism for the Euclidean and constructible
topologies using Lemma \ref{lem-closed}. By Corollary \ref{cor-clo}, the map
$\pi$ is closed for the Zariski topology and thus it is also an
homeomorphism for the Zariski topology.

Since the map $\Sp\Pol(Y)\to\Sp\Pol(X)$ has totally real fibers over
the real ideals by Proposition \ref{prestotrealn2}, and since $\pi$ is bijective then it follows from the real Nullstellensatz that
the map 
$$\ReSp\Pol(Y)\to
\ReSp\Pol(X)~~~,~~~\q\mapsto\q\cap\Pol(X)$$ is bijective.

It remains to prove the last statement of the theorem. 
Let $W$ be an irreducible algebraic subset of $Y$
and $V=\overline{\pi(W)}^{Z}$. We have to show that the algebraic fields
extension $\K(V)\to \K(W)$ is an isomorphism. We have $W=\Z(\q)$ for a
$\q\in\RSp \Pol(Y)$ and $V=\Z(\p)$ with $\p=\q\cap \Pol(X)\in \RSp
\Pol(X)$. Since $\p$ and $\q$ are real prime ideals then $W_{\C}$ and $V_{\C}$ are irreducible and $W$ (resp. $V$) is Zariski dense in $W_{\C}$ (resp. $V_{\C}$).
By above results, $\q$ is the unique prime ideal of $\Pol(Y)$ lying over $\p$
and $(\pi_{|W})_{\C}:W_{\C}\to V_{\C}$ is a finite map such that $(\pi_{|W})_{\C}^{-1}(V)=W$ and $\pi_{|W}:W\to V$ is a finite bijective map. Assume the algebraic fields extension $\K(V)\to \K(W)$ has degree $d>1$.
There is a non-empty Zariski open subset $U$ of $V_{\C}$ such that
$(\pi_{|W})_{\C}^{-1}(x)$ consists of precisely $d$ points whenever
$x\in U$. Since $U$ meet $V$, then we get a contradiction and the proof
is done.
\end{proof}

Note that without the assumption that $X$ admits a totally real
normalization in Theorem \ref{bijtotrealn}, the restriction of $\pi$
to an irreducible component is no longer bijective in
general. Consider for instance for $X$ the union of the cubic with an
isolated point from Example \ref{CubicIsolatedPoint} with a vertical line passing from that point. The normalization $Y$ of $X$ consist of a line in disjoint union with the normalization of the cubic, and this is bijective to $X$. However, its restriction to the normalization of the cubic is no longer bijective.

\section{Biregular normalization of a real algebraic variety}

\subsection{Geometric biregular normalization}

Let $X$ be a real algebraic set. In this section, we
study the integral closure $\Pol(X)_{\SO(X)}'$ of $\Pol(X)$ in $\SO(X)$.
From the sequence of inclusions $$\Pol(X)\subset \SO(X)\subset \K(X),$$
we see that $\Pol(X)_{\SO(X)}'$ is a finite $\Pol(X)$-module (as a submodule of
the Noetherian $\Pol(X)$-module $\Pol(X')$) and thus is a finitely
generated $\R$-algebra. So, $\Pol(X)_{\SO(X)}'$
is the ring of polynomial functions of a real
algebraic set, denoted $X^{b}$ in the sequel. Remark that $X^{b}$ is an intermediate algebraic set between $X$
and $X'$ i.e $\Pol(X)\subset\Pol(X^{b})\subset \Pol(X')$.
It follows that we have finite birational polynomial maps $\pi^{b}:X^{b}\to X$
and $\phi : X'\to X^{b}$.

We see $\Pol(X^{b})$ as a subring of $\Pol(X')$.
\begin{prop}
\label{defequivgrn} Considering $\Pol(X^{b})$ as a subset of $\Pol(X')$, the following equality holds :
$$\Pol(X^{b})=\{g\in\Pol(X')|\,\exists f\in\SO(X)\,{\rm
    such\,  that}\,f\circ\pi'=g\,\,{\rm in}\,\, \SO(X')\}.$$
\end{prop}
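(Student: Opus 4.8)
The plan is to work inside a single total ring of fractions, so that the statement reduces to the elementary fact that integral closure in a subring is obtained by intersecting with the ambient integral closure. Since the normalization map $\pi':X'\to X$ is birational, the pullback $(\pi')^*$ induces an isomorphism $\K(X)\xrightarrow{\sim}\K(X')$, and I will use it to regard $\Pol(X)$, $\SO(X)$, $\Pol(X^{b})$ and $\Pol(X')$ all as subrings of one ring $\K$. From the relation $\pi'=\pi^{b}\circ\phi$ one gets $(\pi')^*=\phi^*\circ(\pi^{b})^*$, where $(\pi^{b})^*$ and $\phi^*$ are the inclusions $\Pol(X)\subset\Pol(X^{b})\subset\Pol(X')$; hence under this identification the inclusion $\Pol(X^{b})\hookrightarrow\Pol(X')$ is the genuine inclusion of subrings of $\K$, an element $f\in\SO(X)$ is identified with $f\circ\pi'\in\SO(X')$, and $\Pol(X')$ is exactly the integral closure of $\Pol(X)$ in $\K$. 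With these conventions the condition ``$f\circ\pi'=g$ in $\SO(X')$'' says precisely that $g\in\Pol(X')$ coincides with an element $f$ of $\SO(X)$ once everything is viewed in $\K$, so the assertion to be proved becomes the set equality $\Pol(X^{b})=\SO(X)\cap\Pol(X')$, the intersection taken in $\K$.

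The algebraic heart of the matter is then the standard observation that, for the tower $\Pol(X)\subset\SO(X)\subset\K$, the integral closure of $\Pol(X)$ in $\SO(X)$ equals $\SO(X)\cap\Pol(X')$. Indeed, an element of $\SO(X)$ integral over $\Pol(X)$ is a fortiori integral over $\Pol(X)$ inside $\K$, hence lies in $\Pol(X')\cap\SO(X)$; conversely, any element of $\SO(X)$ that already belongs to $\Pol(X')$ is integral over $\Pol(X)$ and therefore lies in the integral closure of $\Pol(X)$ in $\SO(X)$. Since by definition $\Pol(X^{b})$ is exactly this integral closure of $\Pol(X)$ in $\SO(X)$, I obtain $\Pol(X^{b})=\SO(X)\cap\Pol(X')$.

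To make the translation into the stated form fully explicit I would spell out the two inclusions. For the inclusion $\subseteq$: given $g\in\Pol(X^{b})$, take $f=g$ viewed in $\SO(X)$; then $f\circ\pi'=(\pi')^*(g)=\phi^*((\pi^{b})^*(g))=g$ in $\SO(X')$, so $g$ lies in the right-hand side. For the inclusion $\supseteq$: given $g\in\Pol(X')$ together with some $f\in\SO(X)$ satisfying $f\circ\pi'=g$ in $\SO(X')$, the bijectivity of $(\pi')^*$ on total rings of fractions forces $f=g$ in $\K$; thus $g$ is an element of $\SO(X)$ which is integral over $\Pol(X)$ (because $g\in\Pol(X')$), whence $g\in\Pol(X^{b})$.

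The only genuinely delicate point is the bookkeeping of the pullbacks: one must check that the inclusion $\phi^*:\Pol(X^{b})\hookrightarrow\Pol(X')$ is compatible with pullback along $\pi'$, that is, that $(\pi')^*$ restricted to $\SO(X)\cap\Pol(X')$ realizes the identity on $\Pol(X^{b})$ seen inside $\Pol(X')$. Once the identification $\K(X)\cong\K(X')$ is fixed and the factorization $\pi'=\pi^{b}\circ\phi$ is used, this compatibility is automatic, and everything else follows immediately from the definition of integral closure; there is no substantial analytic or geometric obstacle beyond this verification.
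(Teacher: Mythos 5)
Your proof is correct and follows essentially the same route as the paper: both arguments identify everything inside the total ring of fractions via the birational isomorphism $\K(X)\cong\K(X')$ and reduce the claim to the equality $\Pol(X^{b})=\SO(X)\cap\Pol(X')$, i.e.\ that an element of $\SO(X)$ integral over $\Pol(X)$ is exactly a rational function that is both regular on $X$ and polynomial on $X'$. You are merely more explicit about the pullback bookkeeping and about the reverse inclusion, which the paper leaves implicit.
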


\begin{proof}
An element of the integral
closure of $\Pol(X)$ in $\SO(X)$
is a rational function on $X$ integral over $\Pol(X)$ that can be 
extended as a regular function $f$ on $X$. This integral rational
function gives rise to a polynomial function $g\in\Pol(X')$ and we get
$f\circ \pi'=g$ on $X'$ since $\pi'$ is a regular map and since
$f\circ \pi'$ and $g$ coincide as rational functions on $X'$.
\end{proof}

\begin{rem}\label{rem-bir} In other words, for any polynomial function $g$ on $X^b$, there exists a regular function $f$ on $X$ such that $f\circ \pi'=g \circ \phi$. As a consequence $g$ and $f\circ \pi^b$ coincide on the image of $\phi$ in $X^b$. That image being Zariski dense by birationality of $\phi$, the (polynomial and) regular functions $g$ and $f\circ \pi^b$ coincides actually on the whole $X^b$.
\end{rem}

The interest in $X^b$ comes from the following result, which states that $X^{b}$ is the biggest intermediate algebraic set
between $X$ and $X'$ with the same regular functions as $X$.

\begin{thm}
\label{propunivbiregweak}
Let $X$ be an algebraic
set, $X'$ be its normalization and $X^b$ be the algebraic set such that $\Pol(X^b)=\Pol(X)_{\SO(X)}'$. 

Then $X^{b}$ is the biggest
algebraic set among the 
intermediate algebraic sets $Y$ between $X$ and $X'$ such that the
induced map $\pi:Y\to X$ is a biregular isomorphism. More precisely,
$\pi^{b}$ is a biregular isomorphism and moreover,
given an algebraic set $Y$ and a finite
birational polynomial map $\pi:Y\to X$, then, $\pi$
is a biregular isomorphism if and only if there exists a finite
birational polynomial map $\theta:X^{b}\to Y$ such that
$\pi\circ\theta=\pi^{b}$ (as polynomial maps).
\end{thm}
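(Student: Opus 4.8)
The plan is to prove the theorem in two main parts: first establish that $\pi^b$ is itself a biregular isomorphism, and then prove the universal (``biggest'') property. For the first part, recall that $\Pol(X^b)=\Pol(X)_{\SO(X)}'$ sits between $\Pol(X)$ and $\SO(X)$ by construction, hence $\Pol(X)\subset\Pol(X^b)\subset\SO(X)$. Localizing at the multiplicative set $\S(X)$ of nowhere-vanishing functions, and using that localization commutes with integral closure (\cite[prop. 5.12]{AM}), I would show that $\S(X)^{-1}\Pol(X^b)$ equals $\SO(X)$ on the nose: indeed $\SO(X)=\S(X)^{-1}\Pol(X)\subset \S(X)^{-1}\Pol(X^b)\subset\SO(X)$, forcing equality. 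Since $\S(X)=\S(X^b)$ can be checked (elements of $\S(X)$ remain nowhere-vanishing on $X^b$ via $\pi^b$, and conversely every element of $\S(X^b)$ divides into $\SO(X)$), one concludes $\SO(X^b)=\S(X^b)^{-1}\Pol(X^b)=\SO(X)$, i.e.\ $\pi^b$ induces an isomorphism on rings of regular functions and is a biregular isomorphism.

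For the universal property, let $\pi:Y\to X$ be a finite birational polynomial map. The forward direction (if $\pi$ factors through $X^b$ via $\theta$ with $\pi\circ\theta=\pi^b$, then $\pi$ is biregular) follows formally: we would have ring inclusions $\Pol(X)\subset\Pol(Y)\subset\Pol(X^b)\subset\SO(X)$, and localizing at $\S(X)$ again squeezes $\SO(Y)$ between $\SO(X)$ and $\SO(X^b)=\SO(X)$, giving $\SO(Y)\cong\SO(X)$. The more substantial direction is the converse: assuming $\pi$ is a biregular isomorphism, I must produce the factorization $\theta:X^b\to Y$. The key observation is that a biregular isomorphism gives $\SO(Y)\cong\SO(X)$, so (identifying everything inside the common total ring of fractions $\K(X)=\K(Y)$) we have $\Pol(Y)\subset\SO(Y)=\SO(X)$. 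Meanwhile $\Pol(Y)$ consists of functions integral over $\Pol(X)$ because $\pi$ is finite. Therefore $\Pol(Y)\subset\Pol(X)_{\SO(X)}'=\Pol(X^b)$, and this inclusion of coordinate rings is exactly a polynomial map $\theta:X^b\to Y$ with the required compatibility $\pi\circ\theta=\pi^b$.

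The main obstacle I anticipate is bookkeeping the identifications inside $\K(X)$ and verifying that the abstract ring inclusion $\Pol(Y)\hookrightarrow\Pol(X^b)$ genuinely corresponds to a \emph{polynomial} map $\theta$ that is moreover finite and birational and satisfies $\pi\circ\theta=\pi^b$ as polynomial maps, rather than merely as rational maps. Since both $Y$ and $X^b$ are intermediate between $X$ and $X'$, all the relevant rings live inside $\Pol(X')\subset\K(X)$, so the compatibility of the maps reduces to the compatibility of the corresponding inclusions of subrings of $\K(X)$, which is automatic once the inclusion $\Pol(Y)\subset\Pol(X^b)$ is established. I would also need to invoke finiteness of $\Pol(X^b)$ as a $\Pol(Y)$-module (it is finite over $\Pol(X)$, hence over the intermediate ring $\Pol(Y)$) and birationality (all these maps induce isomorphisms on $\K$), so that $\theta$ is indeed a finite birational polynomial map. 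A secondary subtlety worth stating explicitly is that ``biregular isomorphism'' must be interpreted as $\SO(Y)\cong\SO(X)$ compatibly with $\pi$, which is precisely the definition adopted earlier in the paper, so that the containment $\Pol(Y)\subset\SO(X)$ holds with the correct identifications.
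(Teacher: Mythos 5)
Your proposal is correct, and its second half (the universal property) follows the paper's own argument essentially verbatim: a biregular isomorphism $\pi:Y\to X$ gives $\Pol(Y)\subset\SO(X)$ inside $\K(X)$, finiteness gives integrality over $\Pol(X)$, hence $\Pol(Y)\subset\Pol(X)_{\SO(X)}'=\Pol(X^b)$, and the converse is the squeeze $\SO(X)\subset\SO(Y)\subset\SO(X^b)=\SO(X)$. Where you genuinely diverge is in proving that $\pi^b$ is a biregular isomorphism. The paper argues geometrically: injectivity of $\pi^b$ because every $g\in\Pol(X^b)$ factors as $f\circ\pi^b$ with $f\in\SO(X)$ (so polynomial functions cannot separate points in a fiber), and surjectivity by passing to the complexification, where a real point $x\in X$ not hit by $X'$ has a conjugate pair of preimages $y,\overline y\in X'_{\C}$ whose images in $X^b_{\C}$ are forced to coincide and hence be real. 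You instead prove directly that $\SO(X^b)=\SO(X)$ by localization, and deduce bijectivity and regularity of the inverse from the dictionary between points and real maximal ideals of the ring of regular functions (Proposition \ref{maxreg2}); this is in effect the route the paper takes later for the \emph{abstract} version (Proposition \ref{equivdefalgrn} and Theorem \ref{propunivbiregweakabstract}), so your argument anticipates that machinery rather than the geometric proof given here. Two spots deserve to be made precise: the step ``every element of $\S(X^b)$ divides into $\SO(X)$'' should be justified by the identity $\S(A)^{-1}A=\T(A)^{-1}A$ from the Positivstellensatz (as in the paper's subsection on abstract rings of regular functions), since elements of $1+\sum\Pol(X^b)^2\subset 1+\sum\SO(X)^2$ are visibly units of $\SO(X)$ while a raw element of $\S(X^b)$ is not obviously so without already knowing $\pi^b$ is surjective; and the passage from ``$\SO(X)\to\SO(X^b)$ is an isomorphism'' to ``$\pi^b$ is bijective with regular inverse'' should explicitly invoke the correspondence of Proposition \ref{maxreg2}. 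With those references supplied, your proof is complete; its advantage is that it avoids the complexification entirely, while the paper's surjectivity argument makes visible the geometric mechanism (conjugate complex preimages collapsing to a real point of $X^b$) that your algebra hides.
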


As a consequence, we called $X^b$ the biregular normalization of $X$.

\begin{proof}[Proof of Theorem \ref{propunivbiregweak}]
We prove first that the polynomial map $\pi^{b}:X^{b}\to X$
is a biregular isomorphism.
Let $y_1,y_2\in X^{b}$ such that $\pi^{b}(y_1)=\pi^{b}(y_2)$. Pick any $g\in\Pol(X^{b})$, let $f\in\SO(X)$ be as in Remark \ref{rem-bir}. Then 
$$g(y_1)=f\circ \pi^b(y_1)=f\circ \pi^b(y_2)=g(y_2),$$
so that $\pi^{b}$ is injective.

To prove the surjectivity of $\pi^b$, take $x\in X$. If $x$ belongs to the image of $\pi'(X')$, then the result is immediate. So assume that there exist conjugated points
$y,\overline{y}\in X'_{\C}$ such that $\pi'_{\C}(y)=\pi'_{\C}(\overline{y})=x$, the complex normalization map being surjective. Pick any $g\in\Pol(X^{b})$, and let $f\in\SO(X)$ be such that $g\circ \phi= f \circ \pi'$ on $X'$ as in Remark \ref{rem-bir}. By regularity of $f$, this equality holds actually on a Zariski dense open subset $U$ of $X'_{\C}$ that contains $\pi'^{-1}(X)$. In particular $g(\phi(y))=g(\phi(\overline{y}))$, so that $\phi(y)=\phi(\overline{y})$ is a real preimage of $x$ in $X^b$. This proves the surjectivity of $\pi^b$, and so its bijectivity.

Suppose $X^{b}\subset \R^n$ and
consider a coordinate function $y_i$ on $X^{b}$ for $i \in
\{1,\ldots,n\}$. We want to prove that the function $f_i=y_i
\circ (\pi^{b})^{-1}$ is regular on $X$. 
However $f_i\circ \pi^{b}$ is polynomial on $X^{b}$, so that, by
Proposition \ref{defequivgrn}, $f_i$
belongs to $\SO(X)$ as required. It follows that the inverse $(\pi^{b})^{-1}$ is
a regular map as expected.

\vskip 2mm

Let $Y$ be an intermediate algebraic set between $X$ and $X'$ such
that the finite birational polynomial map $\pi:Y\to X$ is a biregular
isomorphism. Denote by $\phi$ the map $X'\to Y$. It follows that the
composition by $\pi^{-1}$ gives an injective mapping $\Pol(Y)\subset
\SO(X)$. Hence,
for any $g\in\Pol(Y)$ there exists $f=g\circ\pi^{-1}\in\SO(X)$ such that
$f\circ\pi=g$ on $Y$ and thus $f\circ\pi'=g\circ\phi$ on $X'$. By
Proposition \ref{defequivgrn}, we get $\Pol(Y)\subset\Pol(X^{b})$ and
thus we get an induced polynomial map $\theta:X^{b}\to Y$.

Assume we have finite birational polynomial maps
$\theta:X^{b}\to Y$ and $\pi:Y\to X$ such that
$\pi^{b}=\pi\circ\theta$ (i.e $Y$ is an intermediate algebraic
set between $X$ and
$X^{b}$). We get the following ring extensions $\SO(X)\to \SO(Y)\to
\SO(X^{b})$. Since $\SO(X)\to\SO(X^{b})$ is an isomorphism it
follows that $\pi$ and $\theta$ are biregular isomorphisms.
\end{proof}

\begin{cor}
\label{intbireg}
Let $X$ be an algebraic
set. Let $Y$ be an intermediate algebraic set between $X$ and
$X^{b}$. Then $Y$ is biregularly isomorphic with $X$ and $X^{b}$.
\end{cor}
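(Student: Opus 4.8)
The plan is to reduce the statement directly to the final paragraph of the proof of Theorem \ref{propunivbiregweak}, since the corollary is essentially the ``two-out-of-three'' principle for isomorphisms applied to the factorization of $\pi^b$. First I would unwind the hypothesis: saying that $Y$ is an intermediate algebraic set between $X$ and $X^b$ means that we have inclusions $\Pol(X)\subset\Pol(Y)\subset\Pol(X^b)$ inside $\Pol(X')$, and hence finite birational polynomial maps $\theta:X^b\to Y$ and $\pi:Y\to X$ whose composition $\pi\circ\theta$ is the biregular normalization map $\pi^b$.

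Next I would apply the ring-of-regular-functions functor to this factorization. Pulling back regular functions yields a commutative triangle of $\R$-algebra morphisms $\SO(X)\xrightarrow{\pi^*}\SO(Y)\xrightarrow{\theta^*}\SO(X^b)$ with $\theta^*\circ\pi^*=(\pi^b)^*$. By Theorem \ref{propunivbiregweak}, the map $\pi^b$ is a biregular isomorphism, so the composite $(\pi^b)^*$ is an isomorphism of rings of regular functions.

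The key step is then purely formal. All three morphisms are injective, since the underlying polynomial maps are birational and pulling back along a birational map is the identity on the total ring of fractions $\K(X)$, into which $\SO(X)$, $\SO(Y)$ and $\SO(X^b)$ all inject. Because $(\pi^b)^*=\theta^*\circ\pi^*$ is an isomorphism, in particular surjective, the map $\theta^*$ is surjective; being also injective it is an isomorphism, which in turn forces $\pi^*$ to be an isomorphism. Hence both $\pi$ and $\theta$ are biregular isomorphisms, and therefore $Y$ is biregularly isomorphic to both $X$ and $X^b$.

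I do not expect a genuine obstacle here: the argument is exactly the one already carried out at the end of the proof of Theorem \ref{propunivbiregweak}, so the corollary is almost immediate once the hypothesis is translated into the factorization $\pi^b=\pi\circ\theta$. The only point requiring a small amount of care is the injectivity of the pullback maps $\pi^*$ and $\theta^*$, which follows from birationality and the fact that every ring of regular functions appearing embeds into $\K(X)$.
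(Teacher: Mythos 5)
Your proposal is correct and follows essentially the same route as the paper: the corollary is exactly the final paragraph of the proof of Theorem \ref{propunivbiregweak}, where the factorization $\pi^{b}=\pi\circ\theta$ yields the ring extensions $\SO(X)\to\SO(Y)\to\SO(X^{b})$ whose composite is an isomorphism, forcing both maps to be isomorphisms. Your added remark on injectivity of the pullbacks via the embeddings into $\K(X)$ is a harmless elaboration of what the paper leaves implicit.
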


\subsection{Algebraic biregular normalization}
The aim of this part is to give a general algebraic setting for the notion of biregular normalization,
a setting which will particularly adapted to study the local point of view.

Let us recall our natural assumption :

\hyp\; The ring $A$ is reduced with a finite number of minimal primes
$\p_1,\ldots,\p_t$ that are all real ideals.

Under this assumption, we recall that we get an inclusion $A\subset\SO(A)$.

\begin{defn}
\label{defalgrn}
Let $A$ be a ring satisfying \hyp. The biregular
integral closure of $A$, denoted by $A^{b}$, is $A_{\SO(A)}'$ i.e the integral closure of $A$ in
$\SO(A)$.
\end{defn}

In the geometric setting i.e $A=\Pol(X)$ with $X$ a real
algebraic set then we get $\Pol(X^{b})=\Pol(X)^{b}$.

The following proposition says that the biregular integral closure is obtained, locally, 
by taking the integral closure at any maximal ideal which is not real, and by doing nothing 
otherwise :
\begin{prop}
\label{equivdefalgrn}
Let $A$ be a a ring satisfying \hyp. We have
\begin{enumerate}
\item[1)] Let $\m\in\MSp A$ but
  $\m\not\in\RSp A$. Then $$(A^{b})_{\m}=(A_{\m})'=(A')_{\m}.$$
\item[2)] Let $\m\in\ReMax A$. Then $$(A^{b})_{\m}=A_{\m}=\SO(A)_{\m}.$$
\end{enumerate}
\end{prop}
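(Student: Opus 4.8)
The plan is to read both equalities off a single computation, using that localization commutes with integral closure. Since $A^{b}$ is by definition the integral closure of $A$ in $\SO(A)=\S(A)^{-1}A$, applying the exact functor $(A\setminus\m)^{-1}(-)$ and \cite[prop. 5.12]{AM} shows that $(A^{b})_{\m}$ is the integral closure of $A_{\m}$ \emph{inside} the ring $(A\setminus\m)^{-1}\SO(A)=(A\setminus\m)^{-1}\S(A)^{-1}A$, while the same principle gives $(A')_{\m}=(A_{\m})'$, the integral closure of $A_{\m}$ in the (localized) total ring of fractions $K$. So everything reduces to identifying the ambient ring $(A\setminus\m)^{-1}\SO(A)$, and the two cases are separated exactly by Proposition \ref{keyreg}, which decides whether $\m$ meets $\S(A)$.

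For part 2), where $\m\in\ReMax A$, Proposition \ref{keyreg} gives $\S(A)\cap\m=\emptyset$, i.e. $\S(A)\subseteq A\setminus\m$. Hence the multiplicative set generated by $A\setminus\m$ and $\S(A)$ is just $A\setminus\m$, so inverting $\S(A)$ first changes nothing and $(A\setminus\m)^{-1}\SO(A)=A_{\m}$ (which is precisely Corollary \ref{maxreg1ter}). Then the inclusions $A\subseteq A^{b}\subseteq\SO(A)$ localize to $A_{\m}\subseteq(A^{b})_{\m}\subseteq(A\setminus\m)^{-1}\SO(A)=A_{\m}$, forcing $(A^{b})_{\m}=A_{\m}=\SO(A)_{\m}$. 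This case is routine and needs no further input.

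For part 1), where $\m\in\MSp A\setminus\RSp A$, Proposition \ref{keyreg} now produces an element $a=1+(\text{sum of squares})\in\S(A)\cap\m$, which is simultaneously a non-unit of $A_{\m}$ and a unit of $(A\setminus\m)^{-1}\SO(A)$; in particular no prime of the latter ring contracts to $\m$. The inclusion $(A^{b})_{\m}\subseteq(A')_{\m}$ is automatic, so it suffices to prove $(A')_{\m}\subseteq(A\setminus\m)^{-1}\SO(A)$: then the integral closure of $A_{\m}$ computed inside $(A\setminus\m)^{-1}\SO(A)$ already recovers the full $(A_{\m})'=(A')_{\m}$. Since $A'$ is a finite $A$-module, I would reformulate this inclusion, via a common denominator, as the assertion that the conductor $\{x\in A\mid xA'\subseteq A\}$ meets the multiplicative set generated by $A\setminus\m$ and $\S(A)$.

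I expect this last reformulation to be the main obstacle, and indeed the only step that is not formal: proving it requires using the non-reality of $\m$ decisively (through the element $a\in\m\cap\S(A)$) to rule out a prime $\q\subseteq\m$ sitting in the non-normal locus yet avoiding $\S(A)$. All the remaining ingredients are bookkeeping that I would carry out at the end — the reduction to the irreducible case through Proposition \ref{ReducedIntegralClosure} and Lemma \ref{IntegralExtensionHypothesis}, and the description of the maximal ideals of $A'$ lying over $\m$, which follows the same pattern as Proposition \ref{closint1}.
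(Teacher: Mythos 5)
Your part 2) is complete and is essentially the paper's own argument: for $\m\in\ReMax A$ one has $\S(A)\subseteq A\setminus\m$ by Proposition \ref{keyreg}, hence $(A\setminus\m)^{-1}\SO(A)=A_{\m}$ and the chain $A_{\m}\subseteq (A^{b})_{\m}\subseteq A_{\m}$ collapses. The problem is part 1), and it sits exactly where you put it. You correctly reduce 1) to the inclusion $(A')_{\m}\subseteq (A\setminus\m)^{-1}\SO(A)$, i.e.\ to the claim that the conductor meets the multiplicative set $T$ generated by $A\setminus\m$ and $\S(A)$, and you leave that claim as ``the main obstacle'', to be settled by ruling out a prime $\q\subseteq\m$ lying in the non-normal locus yet disjoint from $\S(A)$. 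That is not a proof, and in fact such a prime can exist, so the step fails. Take $A=\R[x,y,z]/(y^2-x^2(x+1))$, the cylinder over a nodal cubic (it satisfies \hyp), and the non-real maximal ideal $\m=(x,y,z^2+1)$. The prime $\q=(x,y)$ of the singular $z$-axis satisfies $\q\subseteq\m$ and $\Z(\q)\neq\emptyset$, hence $\q\cap\S(A)=\emptyset$ and, $\q$ being prime, $\q\cap T=\emptyset$. Now $y/x$ is integral over $A$ (it satisfies $(y/x)^2=x+1$) and $(A:_A\, y/x)=(x,y)=\q$; so $y/x\in (A')_{\m}$, while any $t\in T$ with $t\cdot (y/x)\in A$ would lie in $\q\cap T$, whence $y/x\notin T^{-1}A=(A\setminus\m)^{-1}\SO(A)\supseteq (A^{b})_{\m}$. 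Thus $(A^{b})_{\m}\subsetneq (A')_{\m}$ and statement 1) fails at this $\m$.

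So the gap in your write-up is not one you could have filled. The paper's own proof of 1) hides the same difficulty in the unjustified assertion that $\m\cap\S(A)\neq\emptyset$ forces $\SO(A)_{\m}=K_{\m}$; this amounts to saying that every non-minimal prime contained in $\m$ meets $\S(A)$, which holds for curves (the only candidate is $\m$ itself) but fails at the $\q$ above. (The paper's later computation $X^{b}=X$ for the Koll\'ar surface already contradicts 1) at the non-real points of its singular $z$-axis.) Your conductor reformulation is the right diagnostic: 1) holds at a non-real $\m$ precisely when $A_{\q}$ is integrally closed for every prime $\q\subseteq\m$ with $\Z(\q)\neq\emptyset$, e.g.\ when $\dim A=1$ or when the non-normal locus of $\Sp A$ consists of finitely many maximal ideals. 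Under such an additional hypothesis your argument (and the paper's) goes through; without it, neither does.
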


\begin{proof}
The proof uses the sequence of inclusions
$A\subset\SO(A)\subset K$ already described where $K$ denotes the
total ring of fractions of $A$.

Let $\m\in\Max A$. Let $\overline{A_{\m}}$ be the integral
closure of $A_{\m}$ in $\SO(A)_{\m}$. 
Since integral closure commutes with localization, we get $\overline{A_{\m}}=(A^{b})_{\m}$. 

Assume $\m\in\MSp A$ but
  $\m\not\in\RSp A$. By Proposition \ref{keyreg},
  $\S(A)\cap\m\not=\emptyset$ and thus $\S(A)^{-1}\m=\SO(A)$
  (\cite[Prop. 4.8]{AM}). It follows that
$\SO(A)_{\m}=K_{\m}$ and thus
$\overline{A_{\m}}=(A^{b})_{\m}=(A_{\m})'=(A')_{\m}$.

Assume $\m\in\ReMax A$. It follows from Corollary \ref{maxreg1ter} that
$\SO(A)_{\m}=A_{\m}$ (thus $(A^{b})_{\m}=A_{\m}$) and the proof of 1)
and 2) is done.
\end{proof}

It follows from Proposition \ref{equivdefalgrn} that biregular
normalization and localization by a maximal ideal commute.
\begin{cor}
Let $A$ be a a ring satisfying \hyp and let $\m\in\MSp
A$. Then $$(A^b)_{\m}=(A_{\m})^b.$$
\end{cor}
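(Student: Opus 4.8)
The plan is to read off the corollary directly from Proposition \ref{equivdefalgrn}, applied once to $A$ at $\m$ and once to the local ring $A_\m$ at its maximal ideal, and then to match the two outcomes case by case according to whether $\m$ is real. For this strategy to make sense I first need to know that $A_\m$ itself satisfies \hyp, so that $(A_\m)^b$ is defined in the first place; and I need to record the elementary observation that $\m$ is real if and only if the maximal ideal $\m A_\m$ of $A_\m$ is real.

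First I would check that $A_\m$ satisfies \hyp. Being a localization of the reduced ring $A$, it is reduced and has finitely many minimal primes, namely the $\p_i A_\m$ coming from those minimal primes $\p_i$ of $A$ with $\p_i\subset\m$. Each such ideal is real, because its residue field in $A_\m$ is canonically isomorphic to $k(\p_i)$, which is formally real since $\p_i$ is a real ideal of $A$. Likewise the unique maximal ideal $\m A_\m$ has residue field $A_\m/\m A_\m\cong A/\m$, so $\m A_\m$ is real precisely when $\m$ is real.

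Then I would run the two computations. Applying Proposition \ref{equivdefalgrn} to $A$ at $\m$ gives $(A^b)_\m=(A_\m)'$ when $\m$ is not real and $(A^b)_\m=A_\m$ when $\m$ is real, where $(A_\m)'$ denotes the integral closure of $A_\m$ in its total ring of fractions. On the other side, since $A_\m$ is local, localizing at $\m A_\m$ does nothing, so applying Proposition \ref{equivdefalgrn} to the ring $A_\m$ at the maximal ideal $\m A_\m$ yields $(A_\m)^b=\bigl((A_\m)^b\bigr)_{\m A_\m}$, which equals $\bigl((A_\m)_{\m A_\m}\bigr)'=(A_\m)'$ when $\m A_\m$ is not real and equals $(A_\m)_{\m A_\m}=A_\m$ when $\m A_\m$ is real. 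Matching the two lists through the equivalence that $\m$ is real if and only if $\m A_\m$ is real, the two sides coincide in both cases, which gives $(A^b)_\m=(A_\m)^b$.

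The only genuinely delicate point is the verification that $A_\m$ satisfies \hyp, and more precisely that its minimal primes remain real; everything else is just a bookkeeping match of the two cases of Proposition \ref{equivdefalgrn}. I would therefore take care to state the residue-field identification $k(\p_i A_\m)\cong k(\p_i)$ cleanly, since the reality of the minimal primes of $A_\m$ rests entirely on it.
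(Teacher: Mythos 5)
Your proof is correct and follows exactly the route the paper intends: the corollary is stated in the paper without proof as an immediate consequence of Proposition \ref{equivdefalgrn}, and your case-by-case matching (together with the verification that $A_{\m}$ satisfies \hyp{} and that $\m$ is real iff $\m A_{\m}$ is real) supplies precisely the bookkeeping the paper leaves implicit.
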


If one assume that $A$ is a domain, one deduces from 
$A^b=\bigcap_{\m\in\MSp A} A^b_{\m}$, that:
\begin{prop}
Let $A$ be a real ring which is a domain. We have
$$A^{b}=(\bigcap_{\m\in\MSp
  A\setminus\ReMax A}(A')_{\m})\cap(\bigcap_{\m\in\ReMax
  A}A_{\m})$$
where the intersection has a sense in the fraction field of $A$.
\end{prop}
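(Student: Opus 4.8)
The plan is to read off the global identity from the local description of $A^{b}$ already obtained in Proposition \ref{equivdefalgrn}, glued together by the intersection formula valid for integral extensions of a domain. Since $A$ is a domain satisfying \hyp, its total ring of fractions is its fraction field $K$, and $A^{b}=A'_{\SO(A)}$ sits inside $\SO(A)\subset K$; hence $A^{b}$ is again a domain, integral over $A$, and with the same fraction field $K$. All the localizations $A_{\m}$, $(A^{b})_{\m}$ and $(A')_{\m}$ at maximal ideals $\m$ of $A$ are therefore subrings of $K$, so every intersection below makes sense inside $K$.

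First I would apply to the integral extension $A\to A^{b}$ the consequence of the lying over property recorded just after Proposition \ref{LOP}: for an integral extension $A\to B$ with $B$ a domain one has $B=\bigcap_{\m\in\MSp A}B_{\m}$. With $B=A^{b}$ this gives
$$A^{b}=\bigcap_{\m\in\MSp A}(A^{b})_{\m},$$
which is exactly the starting identity quoted just before the statement.

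Finally I would split the index set $\MSp A$ into the real maximal ideals $\ReMax A$ and the non-real ones $\MSp A\setminus\ReMax A$, and substitute the two cases of Proposition \ref{equivdefalgrn}: for $\m\in\MSp A\setminus\ReMax A$ one has $(A^{b})_{\m}=(A')_{\m}$, while for $\m\in\ReMax A$ one has $(A^{b})_{\m}=A_{\m}$. Inserting these into the displayed intersection yields
$$A^{b}=\Big(\bigcap_{\m\in\MSp A\setminus\ReMax A}(A')_{\m}\Big)\cap\Big(\bigcap_{\m\in\ReMax A}A_{\m}\Big),$$
as claimed. There is no genuine obstacle here beyond bookkeeping; the only point to keep in mind is that, because $A$ is a domain, all the rings involved live in the common fraction field $K$, which is what legitimizes forming these intersections and what lets the two local computations of Proposition \ref{equivdefalgrn} be compared termwise.
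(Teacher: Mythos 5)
Your proof is correct and follows exactly the route the paper intends: the paper derives the proposition from the identity $A^{b}=\bigcap_{\m\in\MSp A}(A^{b})_{\m}$ (the consequence of lying over recorded after Proposition \ref{LOP}) together with the two cases of Proposition \ref{equivdefalgrn}, which is precisely your argument. Your write-up merely makes explicit the bookkeeping that the paper leaves implicit.
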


Let $X$ be a real algebraic set. From the previous
proposition, we see that $X^{b}$ is a normalization of
the non-real locus of $X_{\C}$. 

\begin{ex} 
Consider the irreducible algebraic curve $X=\Z(y^2-(x^2+1)^2x)$). Let $X'$ be the
  normalization of $X$. Since the singularities of $X$ are non-real we
  get $X^{b}=X'$. Remark that $\Pol(X')=\Pol(X)[y/(1+x^2)]$ and that
  $y/(1+x^2)$ is a regular function on $X$ integral over $\Pol(X)$.
\end{ex}

While usual normalization always separates the irreducible components (Proposition \ref{ReducedIntegralClosure}),
it is not the case for the biregular normalization since $\pi^b:X^b\to
X$ is a bijection (Theorem \ref{propunivbiregweak}).

\begin{ex}
Consider the algebraic curve $X=\Z(xy)$. Then $X$ is its own biregular
normalization, while $X'$ is the union of two lines.
\end{ex}

\begin{ex} 
Consider the Koll\'ar surface $X=\Z(y^3-x^3(1+z^2))$
(Example \ref{ExKollar}). Then $\Pol(X')=\Pol(X)[y/x]$.
Since the rational fraction $y/x$ has a pole along the $z$-axis, then
it follows that $X^b\not=X'$ even if $\pi':X'\to X$ is a bijection. By
Proposition \ref{equivdefalgrn}, we see that $X^b=X$.
\end{ex}

The reader is reffered to \cite{FMQ2} to find comparisons of the
biregular normalization with other kinds of normalizations.

\subsection{Biregular extensions of rings}
Let $A\to B$ be an integral extension with $A$ satisfying \hyp\, and $B$ a subring of $A'$, the integral closure of $A$.  
According to Lemma \ref{IntegralExtensionHypothesis}, the ring
$B$ also satisfies \hyp. Since the image of $\T(A)$ by
the extension is contained in $\T(B)$ and since $\T(A)$
(resp. $\T(B)$) doesn't contain any zero divisor element of $A$ (resp.
$B$) then we get an induced ring extension
$\SO(A)\to\SO(B)$ that is non necessarily integral. In case $\SO(A)\to\SO(B)$ is an isomorphism we say that $A\to B$ is biregular.

\begin{rem}
 Another possible framework would be to consider integral extensions $A\to B$ 
 such that $A$ and $B$ both satisfy \hyp.
 As we just said, this condition is fullfiled when $A$ satisfies \hyp\; and $B$ injects into $A'$. The converse being false as one can check with the 
 ring extension $\RR[x]\rightarrow  \RR[x,\sqrt{x}]$.
 
 Nevertheless, the assumption that $B$ injects into $A'$ seemed more natural to us. Moreover, the results we are interested in (Proposition \ref{CharactBiregExt} and Theorem \ref{propunivbiregweakabstract})
 remain exactly the same if we replace one framework with the other.
\end{rem}

Let us give a characterization for biregular extensions. 
\begin{prop}\label{CharactBiregExt}
\label{equibireg}
Let $A$ be a ring satisfying \hyp \; and $A\to B$ an integral extension of rings contained in $A'$. The following
properties are equivalent:
\begin{enumerate}
\item[1)] The extension $A\to B$ is biregular.
\item[2)] Given any  ideal $\m\in\ReMax A$, there exists a unique
maximal $\m'\in\Max B$ lying over $\m$ and we have $\m'\in\ReMax B$ and the
map $A_{\m}\to B_{\m'}$ is an isomorphism. 
\item[3)] Given any real prime ideal $\p\in\RSp A$, there exists a unique
prime $\q\in\Sp B$ lying over $\p$ and we have $\q\in\RSp B$ and the
map $A_{\p}\to B_{\q}$ is an isomorphism. 
\end{enumerate}
\end{prop}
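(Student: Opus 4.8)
The plan is to prove the equivalence by establishing the cycle of implications $3) \Rightarrow 2) \Rightarrow 1) \Rightarrow 3)$, exploiting the fact that $A \to B$ is integral (so lying over and going up hold by Proposition \ref{LOP}) and that both $A$ and $B$ satisfy \hyp\ by Lemma \ref{IntegralExtensionHypothesis}. The central technical input is Corollary \ref{maxreg1ter}, which tells us that for a real prime $\p$ one has $A_\p = \SO(A)_{\p \SO(A)}$, together with Corollary \ref{maxreg1bis}, which says that the maximal (resp. real prime) ideals of $\SO(A)$ are exactly those of the form $\S(A)^{-1}\m$ with $\m \in \ReMax A$ (resp. $\S(A)^{-1}\p$ with $\p \in \RSp A$).

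For $1) \Rightarrow 3)$, I would argue as follows. Suppose $\SO(A) \to \SO(B)$ is an isomorphism. Let $\p \in \RSp A$. By Corollary \ref{maxreg1bis}, $\S(A)^{-1}\p$ is a real prime ideal of $\SO(A)$; under the isomorphism it corresponds to a unique prime ideal of $\SO(B)$, which must be real and of the form $\S(B)^{-1}\q$ for a unique $\q \in \RSp B$. Because the real primes of $B$ not meeting $\S(B)$ are precisely those surviving in $\SO(B)$ (Proposition \ref{keyreg}), one recovers that $\q$ is the unique real prime of $B$ lying over $\p$; uniqueness among all primes lying over $\p$ follows because a non-real prime $\q''$ lying over $\p$ would, by Proposition \ref{keyreg} applied in the non-real maximal case or by the going-up machinery, fail to be separated from $\q$, contradicting injectivity on localizations. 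The isomorphism $A_\p \cong B_\q$ then comes from localizing the isomorphism $\SO(A) \cong \SO(B)$ and invoking Corollary \ref{maxreg1ter} on both sides: $A_\p = \SO(A)_{\p\SO(A)} \cong \SO(B)_{\q\SO(B)} = B_\q$.

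The implication $3) \Rightarrow 2)$ is the easy specialization: a real maximal ideal $\m$ is in particular a real prime, so the unique-prime-lying-over statement and the local isomorphism $A_\m \cong B_\m'$ are instances of $3)$; that $\m'$ is maximal follows since $A \to B$ is integral (Proposition \ref{LOP}(2)). For $2) \Rightarrow 1)$, I would reconstruct the global isomorphism from the local data. Using the formula $\SO(A) = \bigcap_{\m \in \ReMax A} A_\m$ from Proposition \ref{equalreg} (applied componentwise via the product decompositions of Proposition \ref{abstractnormalisationred}, or directly in the domain case), the isomorphisms $A_\m \xrightarrow{\sim} B_{\m'}$ for each real maximal $\m$ assemble into a map $\SO(A) \to \SO(B)$; the uniqueness of $\m'$ over $\m$ together with the fact that every real maximal ideal of $B$ lies over a real maximal ideal of $A$ (Proposition \ref{LOP}, since real maximal ideals contract to real ideals which are maximal by integrality) shows this assembled map is a bijection on the intersection presentations, hence an isomorphism.

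The main obstacle I expect is the bookkeeping around uniqueness and reality of the prime lying over $\p$, specifically ruling out the simultaneous existence of a real prime and a non-real prime both contracting to $\p$, and making rigorous the passage between the fiber structure over $A$ and the fiber structure over $\SO(A)$. The cleanest route is to phrase everything through the correspondence of Corollary \ref{maxreg1bis} between primes of the localized rings and real primes of the base rings not meeting the relevant multiplicative set, so that "unique prime lying over $\p$ that is moreover real" translates exactly into "unique prime of $\SO(B)$ over $\S(A)^{-1}\p$"; the isomorphism on stalks via Corollary \ref{maxreg1ter} then does the rest. I would be careful to first reduce to the irreducible (domain) case using Proposition \ref{abstractnormalisationred} and Lemma \ref{IntegralExtensionHypothesis}, which put the minimal primes of $A$ and $B$ in contraction-bijection, so that the product decompositions are compatible and no component is lost.
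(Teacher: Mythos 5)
Your overall strategy --- the cycle $3)\Rightarrow 2)\Rightarrow 1)\Rightarrow 3)$, with $2)\Rightarrow 1)$ obtained by globalizing the local isomorphisms and $1)\Rightarrow 3)$ read off from the isomorphism $\SO(A)\simeq\SO(B)$ --- is the same as the paper's, and your $3)\Rightarrow 2)$ and $2)\Rightarrow 1)$ are essentially sound: the paper globalizes via \cite[Prop. 3.9]{AM} applied to the already existing map $\SO(A)\to\SO(B)$ rather than via the intersection formula of Proposition \ref{equalreg}, but both routes work once one notes, as the paper does using \cite[Ex. 3, Ch. 3, Sect. 9]{Ma}, that $B_{\m}=B_{\m'}$ when $\m'$ is the unique maximal ideal of $B$ over $\m$.

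The genuine gap is in $1)\Rightarrow 3)$, precisely at the point you yourself flag as ``the main obstacle''. Your translation of ``unique prime of $B$ lying over $\p$'' into ``unique prime of $\SO(B)$ lying over $\S(A)^{-1}\p$'' is not exact: the primes of $\SO(B)$ only detect those primes $\q$ of $B$ with $\q\cap\T(B)=\emptyset$. A prime $\q''$ of $B$ lying over $\p$ but meeting $\T(B)$ is simply invisible to the isomorphism $\SO(A)\simeq\SO(B)$, and neither Proposition \ref{keyreg} (whose second assertion concerns only \emph{maximal} ideals, while $\q''$ need not be maximal) nor an unspecified ``going-up machinery'' excludes it; ``fail to be separated from $\q$, contradicting injectivity on localizations'' is not an argument. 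The paper closes exactly this case by a direct computation: if $b\in\q\cap\T(B)$, write $b=a/(1+\sum_i a_i^2)$ in $\SO(B)=\SO(A)$ with $a,a_i\in A$; since $B$ injects into $\SO(B)$ (Lemma \ref{IntegralExtensionHypothesis}), the identity $a=b\,(1+\sum_i a_i^2)$ holds in $B$, so $a\in\q\cap A=\p$, while $a$ is a product of two units of $\SO(A)$ --- contradicting the fact that $\p\SO(A)$ is a proper ideal, $\p$ being real (Proposition \ref{keyreg}). Once every prime of $B$ over $\p$ is known to avoid $\T(B)$, your correspondence argument does deliver uniqueness and reality of $\q$ (the only prime of $\SO(A)$ contracting to $\p$ is $\p\SO(A)$, by Corollary \ref{maxreg1ter}), and the chain $A_\p\simeq\SO(A)_{\p\SO(A)}\simeq\SO(B)_{\q\SO(B)}\simeq B_\q$ finishes the proof as you indicate. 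You should supply this missing step explicitly.
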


\begin{proof}
Obviously 3) implies 2).

Let us show that 2) implies 1).
Given any ideal $\m\in\ReMax A$, there exists a unique
maximal $\m'\in\Max B$ lying over $\m$ and we have $\m'\in\RSp B$ and the
map $A_{\m}\to B_{\m'}$ is an isomorphism. By \cite[Ex. 3, Ch. 3,
Sect. 9]{Ma}, we get $B_{\m}=B_{\m'}$. Hence we have
$A_{\m}=B_{\m}$, and by Corollary \ref{maxreg1ter} we obtain $A_{\m}=\SO(A)_{\m}=B_{\m}=\SO(B)_{\m}=B_{\m'}=\SO(B)_{\m'}$.
One has $\SO(A)_{\m}=\SO(B)_{\m}$ for any $\m\in\ReMax A$, which shows that 
$\SO(A)_{\m}=\SO(B)_{\m}$ for any $\m\in\Max \SO(A)$.
By \cite[Prop. 3.9]{AM}, we
get that $\SO(A)\to\SO(B)$ is bijective and thus a ring isomorphism.

Let us show now that 1) implies 3).
Assume $A\to B$ is biregular. Let $\p\in\RSp A$ and let
$\q\in\Sp B$ lying over $\p$. By
Corollary \ref{maxreg1ter}, we have $A_{\p}\simeq
\SO(A)_{\p\SO(A)}$. Since $\SO(A)\simeq \SO(B)$ then it follows that
$\q$ does not meet $\T(B)$. Indeed, by the contrary, let us assume that $b\in\q\cap 
\T(B)$. In $\SO(B)=\SO(A)$, one has $b= \frac{a}{1+\sum_ia_i^2}$
where $a$ and the $a_i$'s ly in $A$. One gets $a\in \q\cap A=\p$. Moreover, 
since $b\in\T(B)$, it is invertible in $\SO(A)$. 
Since $\sum_ia_i^2\in \SO(A)$, one gets also that 
$a\in \SO(A)$, a contradiction.

Now, one may look at $\q$ as a prime ideal of $\SO(B)$ lying over the prime ideal $\p$ viewed in 
$\SO(A)$. The isomorphism $\SO(B)=\SO(A)$ shows that $\q$ is unique and also real.
Moreover, one has $$A_{\p}\simeq
\SO(A)_{\p\SO(A)}\simeq \SO(B)_{\q\SO(B)}\simeq B_{\q}.$$
\end{proof}

Note that the previous 
proposition shows that a biregular ring extension is a centrally subintegral ring extension
as defined in \cite{FMQ2}.

We prove now an abstract version of Theorem \ref{propunivbiregweak}, 
which says that the biregular integral closure
$A^{b}$ of $A$ is the biggest intermediate ring 
between $A$ and $A'$ which is biregular with $A$. Namely:

\begin{thm}
\label{propunivbiregweakabstract}
Let $A$ be a ring satisfying \hyp. Let $A'$ denote the integral
closure of $A$. Then,
\begin{enumerate}
 \item $A\to A^{b}$ is biregular, 
 \item Let $A\to B\to A'$ be a sequence of ring extensions.
Then, $A\to B$ is biregular if and only if $B\subset A^{b}$.
\end{enumerate}
\end{thm}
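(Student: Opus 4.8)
The plan is to carry out the entire argument inside the common total ring of fractions $K$, where $A$, $B$, $A^{b}$, $A'$ and all their rings of regular functions sit as subrings. Note first that $B$ satisfies \hyp\ by Lemma \ref{IntegralExtensionHypothesis}, so $\SO(B)$ embeds in $K$; moreover, since $A\subset B\subset A'$, the ring $B$ has the same total ring of fractions $K$ as $A$ (each $B/\q_i$ lies between $A_i$ and its integral closure $A'_i$, hence shares the fraction field $K_i$). Everything then rests on two elementary properties of the operator $\SO$. It is \emph{monotone}: if $R\subset S$ as subrings of $K$, then $\SO(R)\subset \SO(S)$, since $\T(R)\subset \T(S)$ and $R\subset S$. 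And it is \emph{idempotent}: $\SO(\SO(A))=\SO(A)$. With these in hand the biregularity statements become equalities of subrings of $K$, because the induced maps $\SO(A)\to\SO(B)$ are concretely the inclusions $\T(A)^{-1}A\subset \T(B)^{-1}B$, so ``$A\to B$ biregular'' is the same as ``$\SO(A)=\SO(B)$ inside $K$''.

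First I would establish idempotency $\SO(\SO(A))=\SO(A)$, which is the only genuinely non-formal input. It suffices to show that every element of $\T(\SO(A))$, i.e. every $s=1+\sum_i f_i^2$ with $f_i\in\SO(A)$, is already a unit in $\SO(A)$. If it were not, $s$ would lie in some maximal ideal $\m$ of $\SO(A)$; but by Corollary \ref{maxreg1bis} every maximal ideal of $\SO(A)$ is real, and writing $s=1^2+\sum_i f_i^2\in\m$ realness would force $1\in\m$, which is absurd. Hence $\T(\SO(A))$ consists of units and $\SO(\SO(A))=\SO(A)$.

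For statement (1), recall that by Definition \ref{defalgrn} the ring $A^{b}$ is the integral closure of $A$ inside $\SO(A)$, so that $A\subset A^{b}\subset \SO(A)$ in $K$. Applying monotonicity and then idempotency yields the chain $\SO(A)\subset \SO(A^{b})\subset \SO(\SO(A))=\SO(A)$, whence $\SO(A)=\SO(A^{b})$; this is exactly the assertion that $A\to A^{b}$ is biregular. For statement (2), the implication $B\subset A^{b}\Rightarrow A\to B$ biregular is the same squeeze: from $A\subset B\subset A^{b}$ and monotonicity one gets $\SO(A)\subset \SO(B)\subset \SO(A^{b})=\SO(A)$ using (1), so $\SO(B)=\SO(A)$. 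Conversely, assume $A\to B$ is biregular, i.e. the inclusion $\SO(A)\subset \SO(B)$ is an equality in $K$. Then $B\subset \SO(B)=\SO(A)$, and since $B\subset A'$ by hypothesis, every element of $B$ is simultaneously in $\SO(A)$ and integral over $A$. As $A^{b}=A'\cap \SO(A)$ is precisely the integral closure of $A$ in $\SO(A)$, we conclude $B\subset A^{b}$.

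The main (and only modest) obstacle is the bookkeeping ensuring that all the rings in play live in a single $K$ and that the maps induced on $\SO$ are honest inclusions of subrings, which is what lets me trade the abstract condition ``biregular'' for the concrete equality $\SO(A)=\SO(B)\subset K$; this uses \hyp\ for $B$ (Lemma \ref{IntegralExtensionHypothesis}) together with the fact that the elements of $\T$ are nonzerodivisors. Beyond that, the entire proof is formal manipulation of the two properties of $\SO$, the nontrivial one being idempotency, which itself hinges on the reality of the maximal ideals of a ring of regular functions.
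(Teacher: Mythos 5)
Your proof is correct, but for part (1) it takes a genuinely different route from the paper. The paper proves that $A\to A^{b}$ is biregular by localizing: it invokes Proposition \ref{equivdefalgrn} to get $(A^{b})_{\m}=A_{\m}$ at every $\m\in\ReMax A$, deduces uniqueness and reality of the maximal ideal of $A^{b}$ lying over $\m$, and then applies the local characterization of biregular extensions (Proposition \ref{equibireg}, criterion 2)). You instead argue globally inside $K$: from $A\subset A^{b}\subset \SO(A)$ and monotonicity of $\SO$ you squeeze $\SO(A)\subset\SO(A^{b})\subset\SO(\SO(A))=\SO(A)$, where the idempotency $\SO(\SO(A))=\SO(A)$ is the one nontrivial input, and you prove it correctly from the reality of the maximal ideals of $\SO(A)$ (Corollary \ref{maxreg1bis}). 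This is shorter and goes straight to the definition of biregularity as the equality $\SO(A)=\SO(A^{b})$ of subrings of $K$, at the cost of not recording the finer local information (unique real maximal ideal over each $\m\in\ReMax A$ with isomorphic localizations) that the paper's route produces as a by-product and that Proposition \ref{equibireg} packages. Your bookkeeping that everything lives in the common total ring of fractions $K$ and that the induced maps on $\SO$ are honest inclusions is the right thing to check and is justified by Lemma \ref{IntegralExtensionHypothesis} and Proposition \ref{keyreg}, as you say. For part (2) your argument coincides with the paper's: the forward direction uses $B\subset\SO(B)=\SO(A)$ together with $B\subset A'$ to land in $A^{b}=A'\cap\SO(A)$, and the converse is the same squeeze $\SO(A)\subset\SO(B)\subset\SO(A^{b})=\SO(A)$ using part (1).
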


\begin{proof}
Note that, by Proposition \ref{ReducedIntegralClosure}, $A'$ satisfies \hyp. 

We begin by proving $A\to A^{b}$ is biregular. Let
$\m\in\ReMax A$. By Proposition
\ref{equivdefalgrn} we have shown that $(A^{b})_{\m}=A_{\m}$. It
follows that $(A^{b})_{\m}$ is local and thus there exists a unique
ideal $\m'$ (which is a real ideal) of $A^{b}$ lying over $\m$. By \cite[Ex. 3, Ch. 3,
Sect. 9]{Ma}, we get $(A^{b})_{\m}=(A^{b})_{\m'}$ and hence $A_{\m}=(A^{b})_{\m'}$. By Proposition
\ref{equibireg}, 
$A\to A^{b}$ is biregular.

Assume first that $A\to B$ be a biregular extension where $B$ is a subring of $A'$,
and let us show that $B\subset A^{b}$. Since $B$ satisfies \hyp\, (Lemma \ref{IntegralExtensionHypothesis}), $B$ injects into $\SO(B)$.
Since 
$\SO(A)\simeq\SO(B)$, we get an injective map $B\to
\SO(A)$. Since $A\to B$ is integral and since $A^{b}$ is the
integral closure of $A$ in $\SO(A)$ we get the desired inclusion
$B\subset A^{b}$.

Assume now that $A\subset B\subset A^{b}$. It induces $\SO(A)\subset
\SO(B)\subset \SO(A^{b})$. Since $\SO(A)=\SO(A^{b})$, one has 
$\SO(A)=\SO(B)$ and the
proof is done.
\end{proof}

\section{Integral closure of the ring of continuous rational functions}\label{sec-cont-rat}

The intriguing behaviour of rational functions on a real algebraic set admitting a continuous extension to the whole algebraic set has been investigated in \cite{KN}. Among them, the class of hereditarily rational functions is of special interest. We investigate in this section the integral closure of the ring of continuous rational functions on the central locus, and end the section with discussing fully the case of curves.

We start with recalling the main definitions for continuous rational functions and hereditarily rational functions, and focus later on the restriction of these functions to the central locus on the real algebraic set considered.

\subsection{Continuous rational functions and hereditarily rational functions}

For $X\subset \R^n$ an
algebraic set, the total ring of fractions $\K(X)$ is also the ring of
(classes of) rational
functions on $X$ (which is a field when $X$ is irreducible).
A rational function $f\in \K(X)$ is regular on a Zariski-dense open
subset $U\subset X$ if there exist polynomial functions $p$ and $q$ on
$\R^n$ such that $\Z(q) \cap U=\emptyset$ and $f=p/q$ on $U$. The
couple $(U,f_{|U})$ is called a regular presentation of $f$.

\begin{defn}
\label{defratcont}
Let $f:X\to \R$ be a continuous function. We say that
$f$ is a continuous rational function on $X$ if there exists a
Zariski-dense open subset $U\subset X$ such that $f_{|U}$ is
regular. We denote by $\SR(X)$ the ring of continuous rational functions on $X$.

A map $Y\to X$ between real algebraic sets $X\subset \R^n$ and $Y\subset \R^m$ is called continuous rational if its
components are continuous rational functions on $Y$.
\end{defn}

A typical example is provided by the function defined by $(x,y)\mapsto
x^3/(x^2+y^2)$ on $\R^2$ minus the origin, with value zero at the
origin. 
Note also that on a curve with isolated points, a function regular on the one-dimensional branches can be extended continuously by any real value at the isolated points. In particular, the natural ring morphism
$\SR(X)\rightarrow \K(X)$ which sends $f\in \SR(X)$ to the
class $(U,f_{|U})$ in $\K(X)$, where $(U,f_{|U})$ is a regular
presentation of $f$, is not injective in general. 
This phenomenon is related to the notion of central part of a real algebraic set
(see \cite[Prop. 2.15]{Mo}). 

Another stringy phenomenon is illustrated by Koll\'ar example
(Example \ref{ExKollar}). Consider the surface $S=\Z(y^3-(1+z^2)x^3)$ in $\R^3$. The
continuous function defined by $(x,y,z)\mapsto ~^3 \sqrt{1+z^2}$ is
regular on $S$ minus the $z$-axis, however its restriction to the
$z$-axis is no longer rational. This pathology leads to the notion of
hereditarily rational function in the sense of \cite[Def. 1.4]{KuKu2}.

\begin{defn}
Let $X$ be an algebraic set. A continuous rational function $f\in
\SR(X)$ is hereditarily rational on $X$ if for every
irreducible algebraic subset $V\subset X$ the restriction $f_{|V}$ 
is rational on $V$. We denote by $\SRR(X)$ the ring of hereditarily rational functions on $X$.

A map $Y\to X$ between real algebraic sets $X\subset \R^n$ and $Y\subset \R^m$ is called hereditarily rational if its
components are hereditarily rational functions on $Y$. 
\end{defn}

In particular, in the case of curves, the rings $\SR(X)$ and 
$\SRR(X)$ coincide. 
It is known that for a real algebraic set $X$ with at most isolated
singularities, any continuous rational function is also hereditarily
rational \cite{KN,Mo}. Note also that the regulous functions
introduced in \cite{FHMM} on a general real algebraic set $X\subset
\R^n$ as the quotient of $\SRR(\R^n)$ by the ideal of continuous
rational functions vanishing on $X$, coincide with the hereditarily rational functions on $X$. 
By \cite[Thm. 6.4]{FHMM}, the topology generated by zero sets of hereditarily rational functions on $X$ is the algebraically constructible topology on $X$ (denoted by $\Co$-topology).

\begin{defn}
Let $X$ be an irreducible algebraic set. The subring of all continuous functions on $\Cent X$ that are rational on
$X$ is denoted by $\SR(\Cent X)$. A continuous rational function $f\in
\SR(\Cent X)$ is hereditarily rational on $\Cent X$ if for every
irreducible algebraic subset $V\subset X$ satisfying
$V={\overline{V\cap \Cent X}}^Z$, the restriction $f_{|V\cap \Cent X}$ 
is rational on $V$. We denote by $\SRR(\Cent X)$ the ring of
hereditarily rational functions on $\Cent X$.
\end{defn}

The main interest of restricting continuous rational functions to the central locus is that the canonical maps
$\SR(\Cent X)\rightarrow \K(X)$ and $\SRR(\Cent X)\rightarrow
\K(X)$ are now injective.

\subsection{Zero sets of integral continuous rational functions}
The zero set $\Z(f)$ of an hereditarily rational function $f\in\SRR(X)$ is a Zariski constructible set \cite{FHMM}. However in general, the zero set $\Z(f)$ of an arbitrary continuous rational function $f\in\SR(X)$ has only the structure of a closed semi-algebraic set, and it happens indeed that $\Z(f)$ is not Zariski
constructible \cite[Ex. 3.6]{Mo}.

We provide below sufficient conditions on $X$ and $F$ to ensure that such a pathological behavior can not happen. It consist in an application of the notion of totally real normalization introduced in this paper. More precisely, consider a central irreducible real algebraic set $X$, so that it comes with an extension of domains $\Pol(X)\subset\SR(X)$. Then the zero set of functions belonging to the integral closure $\Pol(X)_{\SR(X)}'$
of $\Pol(X)$ in $\SR(X)$ is Zariski constructible, provided that $X$ admits a central and totally real normalization.

\begin{prop}
\label{zerointratcont}
Let $X$ be an irreducible central real algebraic set with a central normalization which is totally real. The zero set of a continuous rational function on $X$,
which is integral over the polynomial functions, is Zariski closed. If
$f$ is assume moreover to be hereditarily rational, then the sign of $f$ on
$X$ coincides with the sign of a polynomial function on $X$.
\end{prop}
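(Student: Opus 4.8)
The plan is to lift $f$ to the normalization, where integrality forces it to become polynomial, and then to push the relevant sets back down along $\pi'$, exploiting that under our hypotheses this map is simultaneously surjective and Zariski-closed. First I would record the two features of $\pi':X'\to X$ that the whole argument rests on. Since $X=\Cent X$ and $X'=\Cent X'$ by assumption, Proposition \ref{centralsurj} applied to $\pi'$ gives that $\pi'$ is surjective; and since $X$ admits a totally real normalization, Corollary \ref{cor-clo} gives that $\pi'$ is closed for the Zariski topology. Because $X$ is central and irreducible the morphism $\SR(X)\to\K(X)$ is injective, so the hypothesis that $f$ is integral over $\Pol(X)$ means that $f$, viewed in $\K(X)=\K(X')$, lies in $\Pol(X)'=\Pol(X')$ by Proposition \ref{normalisationred}. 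I will write $g$ for the corresponding polynomial function on $X'$, so that $g=f\circ\pi'$ on $X'$.

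For the first assertion I would simply observe that $g(x')=f(\pi'(x'))$ for every $x'\in X'$, so that a point $x\in X$ lies in $\Z(f)$ exactly when $g$ vanishes at one (equivalently, at every) preimage of $x$. By surjectivity of $\pi'$ this reads $\Z(f)=\pi'(\Z(g))$. Now $\Z(g)$ is Zariski closed in $X'$ because $g$ is polynomial, and $\pi'$ is Zariski-closed, so $\Z(f)=\pi'(\Z(g))$ is Zariski closed in $X$. I stress that this step uses neither hereditary rationality nor any sign consideration, which is consistent with the way the statement isolates the extra hypothesis for the second part.

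For the sign statement I would first dispose of the trivial case $f\equiv 0$ (take the polynomial $0$) and assume $f\neq 0$, so that $\Z(f)$ is a proper Zariski-closed subset. Choosing a regular presentation $f=p/q$ with $p,q\in\Pol(X)$, the identity $fq=p$ holds on all of $X$ by continuity, whence $\operatorname{sign} f=\operatorname{sign}(pq)$ on the dense open set $X\setminus\Z(q)$. Thus the candidate $P=pq$ already has the right sign off $\Z(q)$, and the only obstruction is the locus $\Z(q)$, of dimension $<\dim X$, on which $pq$ vanishes while $f$ need not. The plan is to correct the sign there using hereditary rationality: the restriction of $f$ to each irreducible component of $\Z(q)$ is again rational, so an induction on $\dim X$—with base case the curves, where $\SR(X)$ and $\SRR(X)$ coincide and where denominators can be cleared against an everywhere-positive factor—should produce the missing sign data on $\Z(q)$. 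Equivalently, I would try to realize the correction globally by exhibiting a nowhere-vanishing continuous rational function $h>0$ on $X$ with $fh\in\Pol(X)$, and then set $P=fh$, so that $\Z(P)=\Z(f)$ and $\operatorname{sign}P=\operatorname{sign}f$ everywhere.

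The hard part is exactly this globalization. Assembling the sign information on the open stratum $X\setminus\Z(q)$ and on the closed stratum $\Z(q)$ into a single polynomial cannot be done by a polynomial partition of unity, and a naive combination such as $pq+\tilde P\,q^{2k}$ fails because every multiple of $q$ vanishes on $\Z(q)$, killing the correction precisely where it is needed; so one is really forced to produce the positive multiplier $h$, whose existence I expect to be the technical heart. This is where hereditary rationality becomes indispensable: it guarantees that the correcting data on $\Z(q)$ is itself rational and that the first assertion can be re-applied to every restriction occurring in the induction, so that all intermediate zero sets stay Zariski closed. I would also have to check that the strata inherit the standing hypotheses (centrality and total reality of the normalization), controlling the non-normal locus by the finiteness of $\pi'$ and the total reality of its fibers; verifying that the local corrections patch into one global $h$ is the step I anticipate to be the most delicate.
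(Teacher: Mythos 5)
Your treatment of the first assertion is correct and follows essentially the same route as the paper: lift $f$ to $X'$, identify it with a polynomial function $g\in\Pol(X')=\Pol(X)'$ via integrality and the injectivity of $\SR(X)\to\K(X)$ (which holds because $X$ is central), note $\Z(f)=\pi'(\Z(g))$ by surjectivity of $\pi'$ (Proposition \ref{centralsurj}), and conclude with Zariski-closedness of $\pi'$ (Corollary \ref{cor-clo}). The only point you pass over is why $g$ and $f\circ\pi'$ coincide as \emph{functions} on all of $X'$ rather than merely as rational functions: the paper uses that $X'$ is central, so a Zariski-dense open set is Euclidean-dense and the two continuous functions agreeing there must agree everywhere. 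This is minor and easily repaired.

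The second assertion is where your proposal has a genuine gap, and you flag it yourself. Your reduction to finding a nowhere-vanishing positive multiplier $h$ with $fh\in\Pol(X)$, or equivalently to correcting the sign of $pq$ along $\Z(q)$ by induction on dimension, is a reasonable strategy, but the globalization step — patching the sign data on the open stratum $X\setminus\Z(q)$ with the corrections on the lower-dimensional strata into a single polynomial — is precisely the content you do not supply, and it is not a routine verification (as you observe, naive combinations like $pq+\tilde P q^{2k}$ die on $\Z(q)$). The paper does not prove this either: it invokes it as a direct consequence of Theorem~B of \cite{Mo}, which is exactly the statement that a hereditarily rational function whose zero set is Zariski closed has the sign of a polynomial. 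So as a self-contained argument your proposal is incomplete on the second point; the efficient fix is to establish the Zariski-closedness of $\Z(f)$ as you did and then appeal to that external result rather than attempt the stratified induction.
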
 

\begin{proof}
Let $\pi':X'\to X$ be the normalization map. By Proposition
\ref{centralsurj} the map $\pi'$ is surjective.
Let $f\in\Pol(X)_{\SR(X)}'$. Since $X$ is central then
$\SR(X)\subset\K(X)$. Let $(U,f_{|U})$ be a regular presentation of
$f$ that is integral over $\Pol(X)$. It gives rise to a function $g\in\Pol(X')$
such that $g=f_{|U}\circ\pi'$ on $\pi'^{-1}(U)$. Since
$X'$ is central, the continuous functions $g$ and $f\circ\pi'$ coincide
on $X'$. It follows that $\Z(f\circ\pi')$ is a Zariski closed subset of
$X'$. It follows from Corollary \ref{cor-clo} that
$\Z(f)=\pi'(\Z(f\circ\pi'))$ is Zariski closed.

The second assertion is a direct consequence of \cite[Thm. B]{Mo}.
\end{proof}






\subsection{Integral closure of the ring of continuous rational
  functions on the central locus}

Let $X$ be an irreducible real algebraic set. Recall that the ring $\SR(X)$ is not always a domain, contrarily to $\SR(\Cent X)$ whose fraction field is isomorphic to $\K(X)$.

Following the arguments used in the proof of \cite[Prop. 2.33]{Mo}, we
can show that the continuous rational functions on the central locus
of $X$ are exactly the functions on $\Cent X$ that become regular
after a well chosen resolution of singularies of $X$ 
(they are called blow-regular functions in \cite{Mo}).
\begin{prop}
\label{blowreg}
Let $f:\Cent X\to \R$ be a real function. The following properties are equivalent:
\begin{enumerate}
\item $f\in\SR(\Cent X)$.
\item There exists a resolution of singularities $\pi:Y\to X$ such that $f\circ\pi\in\SO(Y)$.
\end{enumerate}
\end{prop}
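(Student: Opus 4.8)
The plan is to prove the two implications separately, relying on two ingredients: Proposition \ref{centralsurj2}, which guarantees that a resolution of singularities $\pi:Y\to X$ surjects onto $\Cent X$ and is closed for the Euclidean topology, and Hironaka's resolution of singularities and of indeterminacies over $\R$, available since we work in characteristic zero.

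For the implication $(2)\Rightarrow(1)$, suppose we are given a resolution $\pi:Y\to X$ with $g:=f\circ\pi\in\SO(Y)$. First I would check that $f$ is rational: since $\pi$ is birational it induces an isomorphism $\K(X)\simeq\K(Y)$, so $g$ corresponds to a rational function on $X$ which coincides with $f$ on the dense open subset of $\Cent X$ over which $\pi$ is an isomorphism. For the continuity, I would use that, by Proposition \ref{centralsurj2}, the map $\pi:Y\to\Cent X$ is a continuous, surjective and Euclidean-closed map, hence a topological quotient map. As $g=f\circ\pi$ is continuous (regular functions are continuous) and constant on the fibres of $\pi$, the universal property of quotient maps yields that $f$ is continuous on $\Cent X$; therefore $f\in\SR(\Cent X)$.

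For the implication $(1)\Rightarrow(2)$, suppose $f\in\SR(\Cent X)$, so that $f$ is a rational function on $X$ which is continuous on $\Cent X$. Viewing $f$ as a rational map $X\dashrightarrow\PP^1$, I would apply resolution of singularities together with resolution of the indeterminacy locus of $f$ to produce a resolution $\pi:Y\to X$ (proper, birational, with $Y$ nonsingular) such that $\tilde f:=f\circ\pi$ extends to a regular morphism $Y\to\PP^1$. It then remains to show that $\tilde f$ takes nowhere the value $\infty$ at real points, that is, $\tilde f\in\SO(Y)$. This is exactly where the central locus enters: by Proposition \ref{centralsurj2} one has $\pi(Y)=\Cent X$, so that $f\circ\pi$ is a well-defined continuous real-valued function on all of $Y$, agreeing with the morphism $\tilde f$ on the dense open subset where $\pi$ is an isomorphism and $f$ is regular. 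Two continuous maps into $\PP^1(\R)$ that agree on a dense subset coincide, so $\tilde f$ is finite everywhere and thus $\tilde f\in\SO(Y)$, as desired.

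I expect the main obstacle to be purely the invocation of resolution of singularities and of indeterminacies in the real-algebraic setting: one must ensure that a single proper birational morphism from a nonsingular $Y$ simultaneously resolves the singularities of $X$ and turns $f$ into a morphism to $\PP^1$, and that it qualifies as a resolution in the sense used here so that Proposition \ref{centralsurj2} applies. The remaining analytic point---that a regular map to $\PP^1$ whose restriction to a dense set is real-valued and continuous cannot attain $\infty$---is a soft topological argument, using that the real points of a dense Zariski-open subset of the nonsingular variety $Y$ are Euclidean-dense in $Y$.
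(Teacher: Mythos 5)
Your argument is correct and follows essentially the route the paper itself indicates: the paper gives no written proof but defers to the arguments of \cite[Prop. 2.33]{Mo}, which are precisely your two ingredients, namely resolution of indeterminacies of $f$ viewed as a map to $\PP^1$ combined with continuity (and Euclidean density of the real points of a dense Zariski open in the nonsingular model) for $(1)\Rightarrow(2)$, and the quotient-map property of $\pi:Y\to\Cent X$ from Proposition \ref{centralsurj2} for $(2)\Rightarrow(1)$. The only detail worth making explicit is that the nonsingular model produced by Hironaka remains an affine real algebraic set, so that $\SO(Y)$ makes sense in the paper's framework; this is standard since projective real varieties are affine.
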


\begin{rem}
Let $\pi:Y\to X$ be a resolution of singularities.
\begin{enumerate}
\item We have $\pi(Y)=\Cent X$ by Proposition \ref{centralsurj2}.
\item The equality $f\circ\pi\in\SO(Y)$ in the statement of the
  previous proposition means that there exists $g\in\SO(Y)$ such that
  $f\circ\pi=g$ on $Y$. In particular, since $\pi$ is a regular birational map, the functions
  $f$, $f\circ\pi$ and $g$ 
  represent the same class in $\K(X)$.
\end{enumerate}
\end{rem}

As a first result, we prove that the ring of continuous rational
functions on a smooth central locus of an algebraic set (or the ring of hereditarily rational functions, it is the same in that case) is integrally closed.

\begin{thm}
\label{intcloslisse1}
Let $X$ be an irreducible algebraic set such that $\Cent(X)=X_{reg}$ (it is equivalent to say that $X_{reg}$ is closed for the Euclidean topology). Then
$\SR(\Cent X)$ is integrally closed in $\K (X)$.
\end{thm}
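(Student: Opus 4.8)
The plan is to exploit the blow-regular characterization of continuous rational functions on the central locus (Proposition~\ref{blowreg}) in order to transport the integrality problem to a nonsingular model, where the ring of regular functions is already known to be integrally closed. Recall first that, since $X$ is irreducible, $\K(X)$ is a field, and that the canonical map $\SR(\Cent X)\to\K(X)$ is injective, so we regard $\SR(\Cent X)$ as a subring of $\K(X)$. Start with $f\in\K(X)$ integral over $\SR(\Cent X)$, say
$$f^d+a_{d-1}f^{d-1}+\cdots+a_0=0,\qquad a_0,\ldots,a_{d-1}\in\SR(\Cent X).$$
The goal is to produce a genuine continuous function on $\Cent X$, rational on $X$, representing $f$.

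First I would choose a single resolution of singularities $\pi:Y\to X$ that works for all the coefficients simultaneously. By Proposition~\ref{blowreg} each $a_i$ becomes regular on some resolution, and by taking a common refinement (principalizing the indeterminacy loci of the finitely many $a_i$ at once) one obtains one $\pi:Y\to X$ with $a_i\circ\pi\in\SO(Y)$ for all $i$; we may moreover arrange $\pi$ to be an isomorphism over the smooth locus $X_{reg}$. Since $\pi$ is birational we identify $\K(Y)=\K(X)$, and applying $-\circ\pi$ to the integral equation shows that $f\circ\pi\in\K(Y)$ is integral over $\SO(Y)$.

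The key step is then the integral closedness of $\SO(Y)$. Here I would use that $Y$ is nonsingular: every real point $y\in Y$ is a smooth point of $Y$, hence, by the Jacobian criterion (which is insensitive to the ground field), a regular and therefore normal point of the complexification $Y_{\C}$. Thus $Y\subset\Norm(Y_{\C})$, and Proposition~\ref{intclosreel2}(2) yields that $\SO(Y)$ is integrally closed in $\K(Y)$. Consequently $f\circ\pi$ equals some $g\in\SO(Y)$. This is the main obstacle, in the sense that it is the one place where nonsingularity is genuinely used; once it is in hand the rest is formal.

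It remains to descend $g$ back to $\Cent X$. By Proposition~\ref{centralsurj2} we have $\pi(Y)=\Cent X$, and since $X_{reg}\subset\Reg(X_{\C})$, the chosen $\pi$ is an isomorphism over $X_{reg}$. Using the hypothesis $\Cent X=X_{reg}$, no real point of $Y$ lies over $\Cent X\setminus X_{reg}=\emptyset$, so $\pi$ restricts to a bijection $\pi^{-1}(\Cent X)\to\Cent X$; hence $g$ descends to a well-defined continuous function $\tilde f$ on $\Cent X$ with $\tilde f\circ\pi=g$. Proposition~\ref{blowreg} then gives $\tilde f\in\SR(\Cent X)$, and since $\tilde f$ agrees with $f$ on the dense open set where $\pi$ is an isomorphism, $\tilde f=f$ in $\K(X)$. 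Therefore $f\in\SR(\Cent X)$, proving that $\SR(\Cent X)$ is integrally closed in $\K(X)$. The only delicate points to write out carefully are the existence of the common resolution isomorphic over $X_{reg}$ and the well-definedness of the descent, both of which rely precisely on the assumption $\Cent X=X_{reg}$.
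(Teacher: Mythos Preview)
There is a genuine gap in your descent step. You claim that the resolution $\pi:Y\to X$ can be chosen so that simultaneously (i) each $a_i\circ\pi$ is regular on $Y$ and (ii) $\pi$ is an isomorphism over $X_{reg}$. These two requirements are in general incompatible: the indeterminacy locus of a continuous rational function $a_i$, viewed as a rational function, may well meet $X_{reg}$. The prototype is $a(x,y)=x^3/(x^2+y^2)$ on $X=\R^2$, which is continuous everywhere but is not regular at the origin; any $\pi$ with $a\circ\pi\in\SO(Y)$ must blow up the origin, so $\pi$ cannot be an isomorphism over $X_{reg}=\R^2$. Consequently $\pi^{-1}(\Cent X)\to\Cent X$ is not a bijection and your descent of $g$ is not well defined. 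The hypothesis $\Cent X=X_{reg}$ does not help here: it says nothing about where the $a_i$ fail to be \emph{regular}.

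The paper's proof addresses exactly this point. It does not try to force $\pi$ to be an isomorphism over $X_{reg}$; instead it observes that for $x\in\Cent X$ the pulled-back coefficients $\tilde a_i=a_i\circ\pi$ are constant on the fiber $\pi^{-1}(x)$ (equal to $a_i(x)$), so every value $\tilde f(y)$ with $y\in\pi^{-1}(x)$ is a root of the single real polynomial $t^d+a_{d-1}(x)t^{d-1}+\cdots+a_0(x)$. Since $x\in X_{reg}$, the fiber $\pi^{-1}(x)$ is Euclidean-connected, and continuity of $\tilde f$ forces it to be constant on each fiber. One then descends via the quotient-map property of $\pi:Y\to\Cent X$. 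Your argument is repairable along precisely these lines, but the shortcut through ``$\pi$ isomorphic over $X_{reg}$'' does not go through.
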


\begin{proof}
Let $f\in \K(X)^{*}$ be a non-zero rational function on $X$, and assume that $f$ satisfies an integral equation with continuous rational coefficients on $\Cent X$, namely there exist $d\in\N^{*}$ and $a_i\in\SR(\Cent X)$,
$i=0,\ldots, d-1$ such that 
$$f^d+a_{d-1}f^{d-1}+\cdots+a_0=0$$ in $\K (X)$.
Then there exists a non-empty Zariski open subset $U$ of $\Cent X$ 
such that 
$$\forall x\in U,~~f^d(x)+a_{d-1}(x)f^{d-1}(x)+\cdots+a_0(x)=0.$$ 
By Proposition \ref{blowreg}, there exists a resolution of singularities $\pi:Y\rightarrow X$ such that 
$\tilde{a_i}=a_i\circ \pi$ is regular on $Y$ for $i=0,\ldots, d-1$. 
In particular
$f\circ \pi$ is a rational function on $Y$ which is integral
over $\SO (Y)$. Since $Y$ is non-singular then $\SO (Y)$
is integrally closed in $\K(Y)$ by Proposition \ref{intclosreel2}, and thus the rational function $f\circ \pi$ can be extended to a
regular function $\tilde{f}$ on $Y$. Obviously, we have 
$$\forall y\in Y,~~\,\tilde{f}^d(y)+\tilde{a}_{d-1}(y)\tilde{f}^{d-1}(y)+\cdots+\tilde{a}_0(y)=0.$$ 
Let $x\in \Cent X=X_{reg}$. Since each regular function $\tilde{a}_i$ is
constant on $\pi^{-1}(x)$, then for all $y\in\pi^{-1}(x)$ the real
number $\tilde{f}(y)$ is a root of the one variable polynomial

$$p(t)=t^d+\tilde{a}_{d-1}(x)t^{d-1}+\cdots+\tilde{a}_0(x)\in\R[t].$$ 
Since
$\pi^{-1}(x)$ is connected in the Euclidean topology and moreover $\tilde{f}$ is continuous on $Y$, we obtain that
$\tilde{f}$ must be constant on $\pi^{-1}(x)$. Hence
$\tilde{f}$ induces a real continuous function $f^c$ on $\Cent X$ such that
$\tilde{f}=f^c\circ \pi$ ($\pi:Y\to \Cent X$ is a quotient map for the Euclidean topology \cite[Prop. 2.33]{Mo}) and therefore $f^c$ is a continuous extension to $\Cent X$ of $f$.
\end{proof}

\begin{cor}
\label{intcloslisse}
Let $X$ be a non-singular irreducible algebraic set. Then
$\SR(X)$ is integrally closed in $\K (X)$.
\end{cor}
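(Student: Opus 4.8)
The plan is to deduce this statement directly from Theorem \ref{intcloslisse1}. First I would check that the smoothness hypothesis places us exactly in the situation covered by that theorem. Indeed, if $X$ is non-singular then every point of $X$ is a smooth point, so that $X_{reg}=X$. In particular $X_{reg}$ is closed for the Euclidean topology, being the whole of $X$, and therefore its Euclidean closure is again $X$; that is, $\Cent X=X=X_{reg}$. This shows that the hypothesis $\Cent(X)=X_{reg}$ required by Theorem \ref{intcloslisse1} is automatically fulfilled.

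It then remains only to identify the two rings of continuous rational functions appearing in the statement. Since $\Cent X=X$, the ring $\SR(\Cent X)$ of continuous rational functions on the central locus coincides with $\SR(X)$. Applying Theorem \ref{intcloslisse1} to $X$ thus gives that $\SR(X)=\SR(\Cent X)$ is integrally closed in $\K(X)$, which is precisely the desired conclusion.

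I do not anticipate any genuine obstacle here: the corollary is simply the specialization of Theorem \ref{intcloslisse1} to the case of an empty singular locus. The only point worth spelling out is that non-singularity forces the chain of equalities $\Cent X=X_{reg}=X$, after which the theorem applies verbatim. In particular, none of the delicate steps in the proof of the theorem (the connectedness of the fibers of the resolution, the continuity argument forcing $\tilde f$ to be constant on each fiber) need to be revisited, since they are already subsumed in the cited theorem.
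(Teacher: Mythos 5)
Your proposal is correct and follows exactly the route the paper intends: the corollary is stated as an immediate consequence of Theorem \ref{intcloslisse1}, obtained by observing that non-singularity forces $\Cent X = X_{reg} = X$ and hence $\SR(\Cent X)=\SR(X)$. Nothing further is needed.
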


A rational function which does not admit a continuous extension on a given algebraic set
may admit different behaviors at a indeterminacy point. 
It can be unbounded like $1/x$ at the origin in $\R$, bounded
with infinitely many limit points like $x^2/(x^2+y^2)$ at the
origin in $\R^2$, 
or bounded with finitely many limit points like in the case of rational function satisfying an integral equation with continuous rational coefficients on the central locus.

\begin{lem} 
\label{indetfinite} 
Let $X$ be an irreducible
  algebraic set. Assume $f\in \K(X)$ satisfies an integral equation
  with coefficients in $\SR(\Cent X)$. Then $f$ admits finitely many different limits at its indeterminacy points that are central.
\end{lem}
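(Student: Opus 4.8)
The plan is to mimic the proof of Theorem~\ref{intcloslisse1}, but to replace the connectedness argument (which there forced a \emph{unique} continuous extension) by a root-counting argument on the fibers of a resolution, which only yields \emph{finiteness}. Write the integral relation as
\[ f^d + a_{d-1} f^{d-1} + \cdots + a_0 = 0, \]
an identity in $\K(X)$ with coefficients $a_i \in \SR(\Cent X)$. Applying Proposition~\ref{blowreg} to the finitely many coefficients and passing to a common resolution dominating the individual ones, I obtain a single resolution of singularities $\pi : Y \to X$ such that every $\tilde a_i = a_i \circ \pi$ is regular on $Y$.

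Pulling back along the birational map $\pi$, the rational function $f \circ \pi \in \K(Y)$ is monic integral over $\SO(Y)$. Since $Y$ is non-singular, $\SO(Y)$ is integrally closed in $\K(Y)$ by Proposition~\ref{intclosreel2}, so $f\circ\pi$ extends to a regular function $\tilde f \in \SO(Y)$ and the relation
\[ \tilde f^{\,d} + \tilde a_{d-1}\tilde f^{\,d-1} + \cdots + \tilde a_0 = 0 \]
holds pointwise on $Y$. Now fix a central indeterminacy point $x$ of $f$. For every $y$ in the fiber $\pi^{-1}(x)$ one has $\tilde a_i(y) = a_i(x)$, so $\tilde f(y)$ is a real root of the univariate polynomial
\[ p_x(t) = t^d + a_{d-1}(x)\, t^{d-1} + \cdots + a_0(x) \in \R[t]. \]
Hence $\tilde f\bigl(\pi^{-1}(x)\bigr)$ is contained in the set of real roots of $p_x$, a set of at most $d$ elements.

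The decisive step is to show that every limit of $f$ at $x$ lies in the finite set $\tilde f\bigl(\pi^{-1}(x)\bigr)$. Here I would use that $\pi$ is proper with $\pi(Y)=\Cent X$ (Proposition~\ref{centralsurj2}) together with the continuity of $\tilde f$. Given a sequence $x_n \to x$ in $\Cent X \cap \dom(f)$ with $f(x_n)\to \ell$, the equality $\tilde f = f\circ\pi$ on $\dom(f)$ lets me lift each $x_n$ to $y_n\in\pi^{-1}(x_n)$ with $\tilde f(y_n)=f(x_n)$; by properness the $y_n$ remain in a compact set, so a subsequence converges to some $y\in\pi^{-1}(x)$, whence $\ell=\tilde f(y)$ by continuity. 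Thus $f$ has at most $d$ distinct limits at $x$, in particular finitely many. I expect the main obstacle to be precisely this last step: controlling an arbitrary approach to $x$ by the compact fiber, which is where properness of the resolution and the identification $\tilde f = f\circ\pi$ on the regular domain are essential. The contrast with Theorem~\ref{intcloslisse1} is that at a general central point the fiber need not be connected, so $\tilde f$ need not be constant along it; this is exactly why one obtains finiteness of the limits rather than a single limit.
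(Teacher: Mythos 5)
Your proof is correct, and it follows the paper's skeleton (resolve, extend $f\circ\pi$ to a continuous $\tilde f$ on $Y$, show $\tilde f$ takes finitely many values on the fiber $\pi^{-1}(x)$, descend to limits of $f$), but the key finiteness step is handled differently. The paper does not arrange for the $\tilde a_i$ to be regular; it only observes that the $a_i\circ\pi$ are continuous rational on the non-singular $Y$, hence hereditarily rational, extends $f\circ\pi$ to $\tilde f\in\SRR(Y)$ via Corollary~\ref{intcloslisse}, and then gets finiteness by restricting $\tilde f$ to the fiber: being hereditarily rational there, $\tilde f$ is constant on each of the finitely many $\Co$-irreducible components of $\pi^{-1}(x)$ (via \cite[Cor. 5.38, Cor. 6.6]{FHMM}). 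You instead pass to a common resolution making all $\tilde a_i$ regular and use the elementary root-counting observation that $\tilde f(\pi^{-1}(x))$ sits inside the real roots of $p_x(t)$; this avoids the regulous machinery entirely and gives the sharper quantitative bound of at most $d$ limit values. You also make explicit the final descent step --- lifting a sequence $x_n\to x$ with $f(x_n)\to\ell$ into the compact preimage $\pi^{-1}(\{x_n\}\cup\{x\})$ and extracting a convergent subsequence --- which the paper compresses into a single ``therefore''; since a resolution is proper by definition here and $\tilde f=f\circ\pi$ on $\pi^{-1}(\dom f)$, this step is sound and is in fact the part most worth writing out. Both proofs work; yours is more self-contained, the paper's fits into its broader $\Co$-connectedness theme used in Proposition~\ref{intclossing}.
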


\begin{proof} 
The rational function $f$ satisfies an integral equation of the form $$f^d+a_{d-1}f^{d-1}+\cdots+a_0=0$$
with $d\in\N^{*}$ and  $a_i\in\SR(\Cent X)$, for $i=0,\ldots, d-1$. Let
$\pi:Y\to X$ be a resolution of the singularities of $X$. The functions $a_i\circ \pi$ are continuous rational functions on a non-singular variety, therefore they are hereditarily rational on
$Y$. Then $f\circ \pi$ is a rational function on $Y$ satisfying an integral
equation with hereditarily rational coefficients. By Corollary \ref{intcloslisse},
$f\circ\pi$ can be extended to $Y$ as an hereditarily rational function $\tilde{f}$
on $Y$. Let $x\in \Cent X$. By the arguments
used in the proof of Theorem \ref{intcloslisse1} then $\tilde{f}$ is
constant on the connected components of $\pi^{-1}(x)$. Since $\tilde{f}$ is hereditarily rational on $Y$, it follows that
$\tilde{f}|_{\pi^{-1}(x)}$ is hereditarily rational on $\pi^{-1}(x)$. In particular
$\tilde{f}$ is constant on each $\Co$-irreducible component of
$\pi^{-1}(x)$. Since there exists a finite number of such components, $\tilde{f}$ takes a finite numbers of values on $\pi^{-1}(x)$, and therefore $f$ admits finitely many limit points at $x$.
\end{proof}

We extend the notion of connectedness to the constructible topology.

\begin{defn}
Let $X$ be an
  algebraic set. Let $Y_1,\ldots,Y_k$ be the $\Co$-irreducible
  components of $X$. We say that $X$ is $\Co$-connected if
  $k=1$ or else if $\forall i\not= j$ in $\{1,\ldots,k\}$ there exists a sequence
  $(i_1,\ldots,i_l)$, $l\geq 2$, of two by two distinct numbers in
  $\{1,\ldots,k\}$ such that $i_1=i$, $i_l=j$, and for $t=1,\ldots,
  l-1$ then $Y_{i_t}\cap Y_{i_{t+1}}\not=\emptyset$.
\end{defn}

For example, an algebraic set $X$ is 
$\Co$-connected when $X$ is connected. This notion enables to extend Theorem \ref{intcloslisse1} to certain singular cases.
\begin{prop}
\label{intclossing}
Let $X$ be an irreducible algebraic set such that there
exists a resolution of singularities $\pi:\tilde{X}\rightarrow X$ such
that for all $x\in \Cent X$ the fiber $\pi^{-1} (x)$ is $\Co$-connected. Then
$\SR(\Cent X)$ is integrally closed in $\K (X)$.
\end{prop}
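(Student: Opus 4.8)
The plan is to follow the scheme of the proof of Theorem \ref{intcloslisse1}, replacing the Euclidean connectedness of the fibers (automatic there since $\Cent X = X_{reg}$) by the given $\Co$-connectedness, and to glue the possible constant values of the lifted function across the $\Co$-irreducible components of each fiber using hereditary rationality. Concretely, take a nonzero $f \in \K(X)$ satisfying an integral equation $f^d + a_{d-1}f^{d-1} + \cdots + a_0 = 0$ with $a_i \in \SR(\Cent X)$, and pull everything back along the resolution $\pi : \tilde X \to X$ provided by the hypothesis. The coefficients $a_i\circ\pi$ are continuous rational on the nonsingular set $\tilde X$, hence hereditarily rational, so $f\circ\pi$ is integral over $\SR(\tilde X)$; as $\SR(\tilde X)$ is integrally closed in $\K(\tilde X) = \K(X)$ by Corollary \ref{intcloslisse}, the function $f\circ\pi$ extends to a continuous rational, thus hereditarily rational, function $\tilde f$ on $\tilde X$. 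This is exactly the first part of the proof of Lemma \ref{indetfinite}.

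Next I would fix $x\in\Cent X$ and study $\tilde f$ on the fiber $\pi^{-1}(x)$. Since each $a_i$ is a genuine continuous function on $\Cent X$, its pullback $a_i\circ\pi$ is constant, equal to $a_i(x)$, on $\pi^{-1}(x)$; hence for every $y\in\pi^{-1}(x)$ the value $\tilde f(y)$ is a root of the fixed polynomial $p(t) = t^d + a_{d-1}(x)t^{d-1} + \cdots + a_0(x) \in \R[t]$. Thus $\tilde f$ takes only finitely many values on $\pi^{-1}(x)$, and, being hereditarily rational, is constant on each $\Co$-irreducible component of $\pi^{-1}(x)$, as already established in the proof of Lemma \ref{indetfinite}. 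Denote these components by $Y_1,\ldots,Y_k$ and by $c_i$ the value of $\tilde f$ on $Y_i$.

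It remains to upgrade ``constant on each component'' to ``constant on the whole fiber'', and this is where the $\Co$-connectedness hypothesis is used and where I expect the only real difficulty to lie. For any two indices there is a chain $i = i_1,\ldots,i_l = j$ with $Y_{i_t}\cap Y_{i_{t+1}}\neq\emptyset$; evaluating the continuous function $\tilde f$ at a point of the intersection gives $c_{i_t} = c_{i_{t+1}}$, so all the $c_i$ coincide and $\tilde f$ is constant on $\pi^{-1}(x)$. Therefore $\tilde f$ factors as $f^c\circ\pi$ for a well-defined function $f^c$ on $\Cent X$; since $\pi:\tilde X\to\Cent X$ is a surjective quotient map for the Euclidean topology (Proposition \ref{centralsurj2}), $f^c$ is continuous, and as it agrees with $f$ on a dense Zariski open subset it lies in $\SR(\Cent X)$ and extends $f$. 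The subtle point throughout is that one genuinely needs $\tilde f$ to be constant on each $\Co$-irreducible component --- not merely locally constant along Euclidean components --- which relies on $\tilde f$ being hereditarily rational on the smooth model $\tilde X$ (so that its level sets are $\Co$-closed) before $\Co$-connectedness can be invoked to conclude.
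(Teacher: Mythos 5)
Your proof is correct and follows essentially the same route as the paper's: extend $f\circ\pi$ to a hereditarily rational $\tilde f$ on the smooth model via Corollary \ref{intcloslisse}, use hereditary rationality to get constancy on each $\Co$-irreducible component of a fiber, chain the components via $\Co$-connectedness, and descend through the quotient map as in Theorem \ref{intcloslisse1}. The only cosmetic difference is that you get finiteness of the values on a fiber directly from the roots of $p(t)$ rather than from the finiteness of the set of components, which changes nothing.
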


\begin{proof}
Assume $f\in \K(X)^{*}$ there exist $d\in\N^{*}$ and  $a_i\in\SR(\Cent X)$,
$i=0,\ldots, d-1$ such that 
$$f^d+a_{d-1}f^{d-1}+\cdots+a_0=0$$ in $\K (X)$. Let
$\pi:\tilde{X}\rightarrow X$ be a resolution of singularities such
that $\forall x\in \Cent X$ the fiber $\pi^{-1} (x)$ is $\Co$-connected.
As we have already explained in the proof of Lemma \ref{indetfinite},
the rational function $f\circ \pi$ can be extended as an hereditarily
rational function to
$\tilde{X}$. Let $\tilde{f}\in\SRR(\tilde{X})$ denote the
extension. Let $x\in \Cent X$. We know that $\tilde{f}$ is constant on the
connected components of $\pi^{-1}(x)$. Let $Y_1,\ldots,Y_k$ be the $\Co$-irreducible
  components of $\pi^{-1}(x)$. Since for $i=1,\ldots,k$
  $\tilde{f}|_{Y_i}\in\SRR(Y_i)$ (see \cite[Cor. 5.38]{FHMM}) then
  $\tilde{f}$ is constant on $Y_i$ (see \cite[Cor. 6.6]{FHMM}). Since
  $\pi^{-1}(x)$ is $\Co$-connected then $\tilde{f}$ is constant
  on $\pi^{-1}(x)$. We conclude the proof in the same way we did in
  the proof of Theorem \ref{intcloslisse1}.
\end{proof}

\begin{ex}\begin{enumerate}
\item Let $X$ be the cuspidal plane curve given by $y^2-x^3=0$. By Proposition
  \ref{intclossing} we know that $\SR(X)$, which is also equal to $\SRR(X)$ since $X$ is a curve, is integrally
  closed.
  \item Let $X$ be the nodal plane curve given by $y^2-(x+1)x^2=0$. The
  rational function $f=y/x$ is integral over $\Pol (X)$ since
  $f^2-(x+1)=0$ on $X\setminus\{ (0,0)\}$. It is easy to see that $f$
  cannot be extended continuously to whole $X$. Hence
  $\SR(X)$ is not integrally closed. Of course the fiber
  over the node cannot be connected when we resolve it.
\item Let $X$ be the central algebraic surface in $\R^3$ defined as by
  $y^2=(z^2-x^2)(x^2-2z^2)$. It can be view as the cone over the
  non-singular curve defined in the plane $z=1$ by the irreducible curve 
with two connected components $y^2=(1-x^2)(x^2-2)$. 
The origin is the only singular point of $X_{\C}$ and thus $X$ is
normal. 
Moreover the blowing-up of the origin gives a resolution of the
singularities of $X$, with exceptional divisor a smooth irreducible
curve with two connected components. 
It is in particular $\Co$-irreducible, therefore it follows again
from Proposition \ref{intclossing} that $\SR(X)$, which is also equal to $\SRR(X)$ since $X$ has only an isolated singularity, is integrally
  closed.
\end{enumerate}
\end{ex}

\begin{rem} Note that even for normal central surfaces, the ring
  $\SRR(X)=\SR(X)$ is not necessarily integrally closed. Consider
  for example the surface $X$ given by $z^2=(x^2+y^2)^2+x^6$ in
  $\R^3$. The origin is the only singular point of $X_{\C}$ and thus
  $X$ is normal. The rational function $f=z/(x^2+y^2)$ satisfies
  the integral equation $f^2=1+x^6/(x^2+y^2)$ with
  coefficients in $\SRR(X)$. 
  As a consequence $f^2$ converges to 1 at the origin, but $f$ has different signs depending on the sign of $z$. Therefore $f$ can not be continuous at the origin.
\end{rem}

\subsection{The case of curves}

Let $X$ be an irreducible
algebraic curve. Recall that in that situation, hereditarily and continuous rational functions (on the central locus) coincide. Let $\pi':X'\rightarrow X$ denote the normalization
map. Note that a normal curve is non-singular and thus automatically central.

In this final section, we aim to determine the integral closure
$\SR(\Cent X)'$ of $\SR(\Cent X)$ in $\K(X)$.
By \cite[Prop. 2.4]{FMQ}, we know that
$\SR(X')$ coincides with $\SO(X')$, so that $\SR(X')$ is an integrally closed ring
by Corollary \ref{intcloslisse} (or Proposition \ref{intclosreel2}). Moreover, the
composition with $\pi'$ induces an inclusion
$\SR(\Cent X)\subset\SR(X')$, so that we obtain a sequence of
inclusions $$\SO(X)\subset \SR(\Cent X)\subset \SO(X').$$
Taking integral closures, we obtain another sequence of inclusions 
\begin{equation}
\label{equ1}
\SO(X)'\subset \SR(\Cent X)'\subset \SO(X'),
\end{equation}
namely $\SR(\Cent X)'$ is an intermediate ring between
$\SO(X)'$ and $\SO(X')$.

Note that, in the particular case when $X$ has totally real normalization, this sequence of inclusions becomes equalities $\SO(X)'= \SR(\Cent X)'= \SO(X')$ by Proposition \ref{egalite}.

\begin{ex}
\label{grospoint}
{\rm Consider the central curve $X=\Z(y^2-x(x^2+y^2))$.
Let $X'$ be its normalization. The curve $X$ has a
  unique singular point obtained by putting together two complex conjugated
  points of $X'_{\C}$ and a point of $X'$. Since the normalization map
  $\pi':X'\rightarrow X$ is a bijection then the composition by $\pi'$
  gives an isomorphism between $\SR(X)$ and $\SR(X')=\SO(X')$ and thus
  $\SR(X)=\SR(X)'=\SO(X')$. Indeed a regular function on $X'$ is constant on the fibers of $\pi'$ and thus induces a (rational) continuous function on $X$.
  Since the fiber of
  $\pi_{\C}:X'_{\C}\rightarrow X_{\C}$ over the singular point of $X$
  is not totally real then $\SR(X)'=\SO(X')\not=
  \SO(X)'$ (Proposition \ref{egalite}). In this example, the first inclusion in
  (\ref{equ1}) is strict and the second one is an equality.}
\end{ex}

We finally relate the integral closure of $\SR(\Cent X)$ with the ring of regular functions on the normalization of $X$.
\begin{thm}
\label{intcloscurve4}
Let
$\pi':X'\rightarrow X$ be the normalization map of an irreducible curve $X$.
Then the integral closure $\SR(\Cent X)'$ of $\SR(\Cent X)$ in $\K(X)$ coincides with the ring $\SO(X')$ of regular functions on the normalization $X'$ of $X$.
\end{thm}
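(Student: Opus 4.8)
The plan is to establish the reverse of the inclusion $\SR(\Cent X)'\subseteq \SO(X')$ coming from (\ref{equ1}), namely $\SO(X')\subseteq \SR(\Cent X)'$; together with (\ref{equ1}) this yields the announced equality. Write $R:=\SR(\Cent X)$, a domain with fraction field $\K(X)$, and recall that, $X'$ being the normalization of a curve, the map $\pi'$ is finite so that $\Pol(X')$ is a finitely generated $\Pol(X)$-module. Since $\Pol(X)\subseteq R$, the ring $\Pol(X')$ is then a finite $R$-module, hence integral over $R$, so that $\Pol(X')\subseteq R'$. As $\SO(X')=\S(X')^{-1}\Pol(X')$, it therefore suffices to prove that every $q\in\S(X')$ is a unit in $R'$: once this is known, $\SO(X')\subseteq R'$ follows at once.

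First I would isolate the real difficulty by recording why the naive attempt fails. Since $q$ is integral over $\Pol(X)$ it satisfies a monic relation $q^n+b_{n-1}q^{n-1}+\cdots+b_0=0$ with $b_i\in\Pol(X)$, and setting $\hat q:=-(q^{n-1}+b_{n-1}q^{n-2}+\cdots+b_1)\in\Pol(X')\subseteq R'$ one gets $q\hat q=b_0\in\Pol(X)$. When $b_0\in\S(X)$, then $1/b_0\in\SO(X)\subseteq R$ and $1/q=\hat q/b_0\in R'$, so $q$ is a unit. The only obstruction is thus concentrated at the real zeros of $b_0$, which are exactly the real points $x_0\in X$ over which some non-real branch of $X'_{\C}$ carries a zero of $q$; there $\hat q$ vanishes on the whole real fibre as well (from $q(y)\hat q(y)=b_0(x_0)=0$ and $q(y)\neq 0$), so $\hat q/b_0$ is of indeterminate type $0/0$ and the factorisation argument breaks down.

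To overcome this I would argue locally, using that integral closure commutes with localisation: it is enough to show that $q$ is a unit in the integral closure of each localisation $R_{\m_{x_0}}$ at a point $x_0\in\Cent X$. By Proposition \ref{centralsurj} the map $\pi'\colon X'\to\Cent X$ is a proper closed surjection, hence a quotient map, so $R$ is exactly the ring of those $h\in\SO(X')$ that are constant on each real fibre of $\pi'$; in other words $R$ is obtained from $\SO(X')$ by gluing together the finitely many real points of each fibre. The central computation is then that the integral closure of $R_{\m_{x_0}}$ un-glues this identification, i.e. it coincides with the semilocalisation $\prod_{y}\SO_{X',y}$ of $\SO(X')$ at the real points $y$ of $\pi'^{-1}(x_0)$ (the curve analogue of the fact that the normalisation of a node is the product of its two branches, the non-real branches contributing no real point and hence no new local factor). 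Granting this, since $q\in\S(X')$ does not vanish at any real point of $X'$ it is a unit in every $\SO_{X',y}$, hence in each local integral closure, hence a unit in $R'$; this gives $1/q\in R'$ for all $q\in\S(X')$ and completes the proof.

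The hard part will be the local computation of the integral closure of $\SR(\Cent X)$ in the last paragraph, and especially the case of a real point $x_0$ over which real branches and a conjugate pair of non-real branches coexist, with $q$ vanishing along a non-real branch: there the real-branch contribution must be separated (handled by the relation $q\hat q=b_0$ of the second paragraph, now localised where $b_0$ is a unit) from the non-real-branch contribution (handled by continuity exactly as in the totally real case, where by Proposition \ref{egalite} one already has $\SO(X)'=\SR(\Cent X)'=\SO(X')$). Gluing these two local descriptions through the identification of $\SR(\Cent X)$ as the fibre-constant subring of $\SO(X')$ is the technical crux of the argument.
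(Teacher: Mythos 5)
Your overall skeleton is the same as the paper's: reduce to a local statement at a point $x_0\in\Cent X$, identify $\SR(\Cent X)$ near $x_0$ as the subring of $\SO(X')$ of functions constant on the real fibre $\{y_1,\dots,y_r\}=\pi'^{-1}(x_0)\cap X'$, and show that the integral closure of $\SR(\Cent X)_{\m_{x_0}}$ is the semilocal ring of $\SO(X')$ at $y_1,\dots,y_r$ (which, incidentally, is the intersection $\bigcap_i\SO_{X',y_i}$ inside $\K(X)$, not a product). But there is a genuine gap: this local computation is precisely the content of the theorem, and you leave it unproved, explicitly flagging it as ``the hard part'' and ``the technical crux.'' At no point do you exhibit an integral equation, over $\SR(\Cent X)_{\m_{x_0}}$, for an element of $\SO(X')_{\m_{x_0}}$ (or even for $1/q$). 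The two tools you propose for the crux do not obviously close it: the relation $q\hat q=b_0$ is useless exactly at the points where $b_0$ vanishes because $q$ vanishes on a non-real branch over $x_0$, which is the only case at issue; and Proposition \ref{egalite} concerns the integral closure of $\SO(X)$, not of $\SR(\Cent X)$, and is a global statement about totally real normalizations, so ``continuity as in the totally real case'' does not by itself produce the needed equation for the non-real-branch contribution.

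The missing idea is short: for $f\in\SO(X')_{\m_{x_0}}$ set $\alpha_i=f(y_i)\in\R$ and consider $g=\prod_{i=1}^{r}(f-\alpha_i)$. This $g$ vanishes at every real point of the fibre over $x_0$, hence (since $\pi':X'\to\Cent X$ is a Euclidean quotient map and is injective near $x_0$ away from $x_0$) it descends to an element of $\SR(\Cent X)_{\m_{x_0}}$; more generally $\R+\m_{y_1}\cap\cdots\cap\m_{y_r}\subseteq\SR(\Cent X)_{\m_{x_0}}$. Expanding the product, $f^r+c_{r-1}f^{r-1}+\cdots+c_0-g=0$ with the $c_j$ real constants, which is a monic equation for $f$ with coefficients in $\SR(\Cent X)_{\m_{x_0}}$. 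This handles real and non-real branches uniformly and in one stroke, with no case distinction; applied to $f=1/q$ it also disposes of your invertibility step, which in any case becomes redundant once the local equality $(\SR(\Cent X)_{\m_{x_0}})'=\SO(X')_{\m_{x_0}}$ is known. Without some such construction your argument does not go through.
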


\begin{proof}
Note first that we only have to consider the local case.
It is not difficult to see that the maximal ideals of $\SR(\Cent X)$ are of the form $\m_x=\{f\in\SR(\Cent X)|\;f(x)=0\}$ for $x\in\Cent X$. Indeed, if $X\subset \R^n$ and since $X$ is a curve then we can extend a continuous rational 
function on $\Cent X$ as a continuous rational function on $\R^n$ and apply \cite[Prop. 5.11]{FMQ}.
Let $x\in \Cent X$. 
We consider the fiber
$(\pi')_{\C}^{-1}(x)=\{y_1,\ldots,y_r,z_1,\overline{z_1},\ldots,
z_t,\overline{z_t}\}$ where $r,t$ are integers, the complex
involution is denoted by a bar, $y_1,\ldots,y_r$ correspond to points of $X'$ and $z_1,\overline{z_1},\ldots,
z_t,\overline{z_t}$ is a set of distinct two-by-two points of $X'_{\C}$.
By Proposition \ref{centralsurj2}, we know that $r\geq 1$.
By Proposition \ref{centralsurj2}, it follows that $\pi':X'\to\Cent X$ is a quotient map for the Euclidean topology and thus 
the continuous rational functions on $\Cent X$
correspond to the regular functions on $X'$ which are constant on the fibers
of $\pi':X'\to\Cent X$. It follows
that $$\SR(\Cent X)_{\m_x}\supset\R+\m_{y_1}\cap\cdots\cap \m_{y_r}$$ with $\m_{y_i}=\{f\in\SO(X')_{\m_x}|\;f(y_i)=0\}$.
We want to prove that $$(\SR(\Cent X)_{\m_x})'=(\SR(\Cent X)')_{\m_x}=\SO(X')_{\m_x}.$$
Let $f\in \SO(X')_{\m_x}$. For $i=1,\ldots,r$, there exists
$\alpha_i\in\R$ such that $f-\alpha_i\in
\m_{y_i}$. Consequently the product $\prod_{i=1}^{r}(f-\alpha_i)$ belong to $
\SR(\Cent X)_{\m_x}$, therefore $f$ satisfies an integral equation with coefficient in $\SR(\Cent X)_{\m_x}$ as required.

\end{proof}


\begin{thebibliography}{ABR2}     


\bibitem{ABR} C. Andradas, L. Br\"ocker, J.M. Ruiz, Constructible sets
 in real geometry, Springer, (1996)



\bibitem{AM} M. F. Atiyah, I. G. Macdonald, Introduction to
  commutative algebra, Reading: Addison-Wesley, (1969)



\bibitem{BCR} J. Bochnak, M. Coste, M.-F. Roy, 
{\it Real algebraic
  geometry}, Springer, (1998)




\bibitem{vdd} L. van den Dries,
\textit{Tame topology and o-minimal structures},   London Mathematical Society
 Lecture Note Series 248 (1998), Cambridge University Press,
Cambridge

\bibitem{Ei} D. Eisenbud,
{\it Commutative Algebra with a view toward algebraic geometry}, Graduate texts in
mathematics, 150, Springer-Verlag, 3rd Ed., (2004)

\bibitem{FHMM} G. Fichou, J. Huisman, F. Mangolte,
  J.-P. Monnier,
{\it Fonctions r\'egulues}, J. Reine angew. Math., 718, 103-151 (2016)

\bibitem{FMQ} G. Fichou, J.-P. Monnier, R. Quarez,
{\it Continuous functions on the plane regular after one blowing-up}, 
Math. Z., 285, 287-323, (2017)

\bibitem{FMQ2} G. Fichou, J.-P. Monnier, R. Quarez,
{\it Weak and semi normalization in real algebraic geometry}, 
Arxiv , (2018)





\bibitem{KuKu2} J. Koll\'ar, W. Kucharz, K. Kurdyka, 
{\it Curve-rational functions},
Math. Ann. 370 (2018), no. 1-2, 39--69

\bibitem{KN} J. Koll\'ar, K. Nowak,
{\it Continuous rational functions on real and p-adic varieties},
Math. Z. 279, 1-2, 85-97 (2015).



\bibitem{Ku} W. Kucharz,
{\it Rational maps in real algebraic geometry},
Adv. Geom.~\textbf{9} (4), 517--539, (2009)

\bibitem{KuKu1} W. Kucharz, K. Kurdyka, 
{\it Stratified-algebraic vector bundles},
J. Reine Angew. Math., to appear

\bibitem{KK} K. Kurdyka, 
{\it Ensembles semialg\'ebriques
       sym\'etriques par arcs}, Math. Ann. 282, 445-462 (1988)

\bibitem{KP} K. Kurdyka, A. Parusi\'nski,
{\it Arc-symmetric sets and arc-analytic mappings} in Arc spaces and additive invariants in real algebraic and analytic geometry, Panor. Synth\`eses. Soc. Math. France 24, 33-67 (2007)




\bibitem{Man} F. Mangolte,
{\it Vari\'et\'es alg\'ebriques r\'eelles},
Cours sp\'ecialis\'es, collection SMF 24, (2017)



\bibitem{Ma} H. Matsumura,
{\it Commutative algebra}, Cambridge studies in advanced mathematics
8, (1989)

\bibitem{Mo} J.-P. Monnier,
{\it Semi-algebraic geometry with rational continuous functions},
\texttt{ arXiv:1603.04193v2 [math.AG]}, to appear in Math. Ann. (2018)












\end{thebibliography}
\end{document}